\documentclass[psamsfonts]{amsart}

\usepackage{amsfonts}

\textheight 205 true mm \textwidth  150 true mm \oddsidemargin
2.5true mm \evensidemargin 2.5 true mm

\markboth{left head}{right head}
\usepackage{amsmath,amstext,amssymb,amsopn,amsthm}
\usepackage{amsmath,amssymb,amsthm}
\usepackage[mathscr]{eucal}
\usepackage{amssymb}
\usepackage{amsmath}
\usepackage{color}

\definecolor{mahogany}{cmyk}{0, 0.77, 0.87, 0}
\definecolor{salmon}{cmyk}{0, 0.53, 0.38, 0}
\definecolor{melon}{cmyk}{0, 0.46, 0.50, 0}
\definecolor{yellowgreen}{cmyk}{0.44, 0, 0.74, 0}
\definecolor{brickred}{cmyk}{0, 0.89, 0.94, 0.28}
\definecolor{OliveGreen}{cmyk}{0.64, 0, 0.95, 0.40}
\definecolor{RawSienna}{cmyk}{0, 0.72, 1.0, 0.45}
\definecolor{ZurichRed}{rgb}{1, 0, 0} 

\usepackage{tikz}
\usetikzlibrary{arrows}

\bibliographystyle{plain}

\usepackage[unicode,bookmarks,colorlinks]{hyperref}
\hypersetup{
    linkcolor=brickred,
}

\numberwithin{equation}{section}

\newtheorem{thm}{Theorem}[section]        
\newtheorem{cor}{Corollary}[section]
\newtheorem{lem}{Lemma}[section]
\newtheorem{prop}{Proposition}[section]
\newtheorem{rmk}{Remark}[section]
\newtheorem{example}{Example}[section]

\newcommand{\R}{\mathbb{R}}                  
\newcommand{\Rd}{\R^d}               
\newcommand{\ioRd}{\int_{\Rd}}              
\newcommand{\iogRd}[1]{\int_{\R^{#1d}} }    
\newcommand{\lgint}[2]{ \int_{0}^{1} \int_{0}^{\lambda_1}\cdots \int_{0}^{ \lambda_{#1}}\int_{\R^{#2d}}  } 
\newcommand{\set}[1]{ \left\{#1\right\} }
\newcommand{\mysum}[3]{\sum\limits_{#1=#2}^{#3}}          
\newcommand{\myprod}[3]{\prod\limits_{#1=#2}^{#3}}

\newcommand{\texpt}[1]{E\left[#1\right]}
\newcommand{\wh}[1]{\widehat{#1}}              
\newcommand{\pV}[1]{\myprod{i}{1}{#1}\wh{V}(\theta_i)}
\newcommand{\sV}[1]{\wh{V}(-\mysum{i}{1}{#1}\theta_i)}
\newcommand{\opV}{\sV{j-1}\pV{j-1}}
\newcommand{\eid}{\,\,{\buildrel \mathcal{D} \over =}\,\,}   
\newcommand{\trace}{Tr( e^{-tH_{V}} - e^{-tH_{\alpha}})}
\newcommand{\Fjalp}[2]{F_{#1}^{(\alpha)}(#2,\xi,\theta)}
\newcommand{\Lj}{L_{j}^{(\alpha)}(\lambda,\theta)}
\newcommand{\calS}{{\mathcal{S}} }
\newcommand{\incmt}[2]{\lambda_{#1}-\lambda_{#2}}        
\newcommand{\abs}[1]{\left|#1\right|}

\newcommand{\spS}[1]{S_{#1}^*}
\newcommand{\alpf}[1]{#1_{\alpha}}
\newcommand{\pH}{ p_{t}^{H_{V}}}
\newcommand{\palp}{p_{t}^{(\alpha)}}
\newcommand{\tgo}{t\downarrow 0}
\newcommand{\const}{ C_{d,\alpha} }
\newcommand{\fract} { (-\Delta)^{ \frac{\alpha}{2} } }

\begin{document}

\title{trace asymptotics for fractional Schr\"{o}dinger Operators}
\author{Luis Acu\~na Valverde}\thanks{Supported in part  by NSF Grant
\# 0603701-DMS under PI Rodrigo Ba\~nuelos}
\address{Department of Mathematics, Purdue University, West Lafayette, IN 47907, USA}
\email{lacunava@math.purdue.edu}
\maketitle
\begin{abstract}
This paper proves an analogue of a result of Ba\~{n}uelos and S\'{a} Barreto \cite{Ba.Sab}  on the asymptotic expansion for the trace of Schr\"{o}dinger operators on $\R^d$ when the Laplacian $\Delta$, which is the generator of the Brownian motion, is replaced by the non-local integral operator $\Delta^{\alpha/2}$, $0<\alpha<2$, which is the generator of the symmetric stable process of order $\alpha$. These results also extend recent results of Ba\~{n}uelos and Yildirim \cite{Ba.Sel} where the first two coefficients for  $\Delta^{\alpha/2}$ are computed.  Some extensions to Schr\"odinger operators arising from relativistic stable and mixed stable processes are obtained. 
\end{abstract}

\tableofcontents

\section{Introduction.}
Heat asymptotic results have been widely used in areas of spectral theory and its applications to scattering theory, statistical and quantum mechanics and in several areas in geometry.  We refer the reader to van den Berg \cite{vanden1} for the computation of the first two terms in the asymptotic expansion of the trace of the heat kernel of the Schr\"{o}dinger operator $-\Delta+V$ under H\"older continuity of the potential and to Ba\~{n}uelos and  S\'{a} Barreto \cite{Ba.Sab} for a more general computation with an explicit formula for all the coefficients for potentials $V\in \calS(\Rd)$, the class of rapidly decaying functions at infinity, and for applications to scattering theory.  For applications in statistical mechanics and quantum theory, we refer the reader to the articles of Lieb 
\cite{Lieb} and  Penrose and Stell \cite{Penrose} about the second viral coefficient of a hard--sphere gas at low temperature and sticky spheres, respectively. Heat trace asymptotic for the Laplacian have been of interest for many years for domains in Euclidean space $\Rd$ and on manifolds where the coefficients reveal many geometric quantities such as volume, surface area, convexity, number of holes, etc.  For more on this large literature as well as some historical perspective, we refer the reader to Arndt, Nittka, Peter and Steiner, \cite[pp 1-71]{AreSch}, Ba\~{n}uelos, Kulczycki and Siudeja \cite{Ba.Kul.Siu,Ba.Kul}, Datchev and Hezari's \cite{KirHez}, Donelly \cite{Don}, McKean and Moerbeke \cite{MckMoe}, and Colin De Verdi\`ere \cite{dev}.

Let $H_2=-\Delta$ and $H_V=-\Delta+V$, $V\in \calS(\Rd)$.   In \cite{Ba.Sab}, the existence of an asymptotic expansion of the trace of the operator $e^{-tH_V} - e^{-tH_2}$,  as $\tgo$, is proved.  To make the connection to the fractional Laplacian more clear, let us denote the heat kernel for $-\Delta$ by 
\begin{equation*}\label{heat2}
p^{(2)}_t(x)=\frac{1}{{(4\pi t)}^{d/2}}e^{-\frac{|x|^2}{4t}},
\end{equation*}
so that 
\begin{equation*}
p^{(2)}_t(0)=\frac{1}{{(4\pi t)}^{d/2}}.
\end{equation*}
Set 
\begin{equation}\label{lambdas}
I_{j}=\set{\lambda=(\lambda_1,...,\lambda_{j}): 0<\lambda_{j}<\lambda_{j-1}<...<\lambda_1<1}.
\end{equation}
 Throughout the paper we use the notation $f(t)= {\mathcal O}(g(t))$, as $\tgo$, to mean that  there exist constants $C$ and $\delta$ such that $|f(t)|\leq C|g(t)|$, for $0<t<\delta$.

With this notation the result in \cite{Ba.Sab} can be stated as follows.  For any integer $J\geq1$,
\begin{equation}\label{Ba.Sab trace}
\frac{Tr(e^{-tH_V} - e^{-tH_{2}})}{ p^{(2)}_t(0)}=\mysum{l}{1}{J}c_{l}(V)t^l + {\mathcal{O}}(t^{J+1}),
\end{equation}
as $\tgo$, with 
\begin{align*}
c_1(V)&=-\ioRd V(\theta)d\theta,  \,\,\, c_{l}(V)= (-1)^l \sum_{\substack{j+n=l\\j\geq2}}C_{n,j}^{(2)}(V), \,\,\, C_{d,2}=(2\pi)^d, \nonumber \\
C_{n,j}^{(2)}(V)&= \frac{C_{d,2}}{ (2\pi)^{jd} n!}\int_{I_{j}}\iogRd{(j-1)}\set{L_{j}^{(2)}(\lambda,\theta)}^n 
\opV d\theta_i d\lambda_i d\lambda_j, \,\textrm{and} \,\,\, \nonumber \\
 L_{j}^{(2)}(\lambda,\theta)&=\mysum{k}{1}{j-1}(\incmt{k}{k+1})\abs{ \mysum{i}{1}{k}\theta_i}^2-
\abs{\mysum{k}{1}{j-1}(\incmt{k}{k+1})\mysum{i}{1}{k}\theta_i}^2. 
\end{align*}
In particular, for $J=2$, the formula gives
\begin{equation}\label{vandenberg}
\frac{Tr(e^{-tH_V} - e^{-tH_{2}})}{ p^{(2)}_t(0)}+t\ioRd V(\theta)d\theta - \frac{t^{2}}{2!}\ioRd V^2(\theta)d\theta={\mathcal O}(t^3), 
\end{equation}
as $\tgo$ which is the van den Berg  \cite{vanden1} results under our assumption on $V$.  For $J=3$, the formula gives 
\begin{align}\label{Ba.Sab.result}
\frac{Tr(e^{-tH_V} - e^{-tH_{2}})}{ p^{(2)}_t(0)} &+t\ioRd V(\theta)d\theta - \frac{t^{2}}{2!}\ioRd V^2(\theta)d\theta \nonumber\\ &+\frac{ t^{3}}{3!}\ioRd V^3(\theta)d\theta + \frac{t^3}{12}\int_{\Rd}|\nabla V(\theta)|^2d\theta={\mathcal O}(t^4), 
\end{align}
as $\tgo$.

For $d=1$, a recurrent formula for the general coefficients in the expansion  was obtained in the seminal paper by McKean-Moerbeke \cite{MckMoe} using KdV methods. Using these techniques, and the symmetry of certain integrals, Colin De Verdi\`ere \cite{dev} computed the first four coefficients in $\R^3$. 
The results in this paper are motivated by  \cite{Ba.Sab} where (\ref{Ba.Sab trace}) is proved by Fourier transform methods for all $d\geq 1$.  Our proof is a combination of probabilistic arguments and Fourier transform techniques and unfortunately is much more technical than  \cite{Ba.Sab}.  These results are also motivated by  \cite{Ba.Sel} where an analogue of van den Berg's results \cite{vanden1} (the computation of the first two terms) is proved for the fractional Laplacian and other related non-local operators.  It is interesting to observe here that (integration by parts)   
$$
\int_{\Rd}|\nabla V(\theta)|^2d\theta=\int_{\R^d}-\Delta V(\theta) V(\theta)d\theta = \mathcal{E}(V, V),
$$
which is the Dirichlet form of $V$ with respect to the Laplacian. Based on this, it is natural to conjecture that the third term in the expansion for the fractional Laplacian should involve the Dirichlet form of $V$ for the operator $(-\Delta)^{\alpha/2}$. But this is not the case, as we shall see momentarily, which is somewhat surprising. 

To state our results for stable processes, we briefly introduce the $\alpha/2$--subordinators in order to more clearly exhibit the similarities and differences from our result  to the Ba\~{n}uelos--S\'{a} Barreto \cite{Ba.Sab} result. For $0<\alpha<2$, an $\alpha/2$--subordinator is an almost surely  non-decreasing $[0,\infty)$-valued process $S=\set{S_{t}}_{t\geq 0}$ starting at 0 and uniquely determined by its Laplace transform 
\begin{equation*}\label{Lap.Subor}
 \texpt{e^{-\lambda S_{t}}}=e^{ -t\lambda^{\alpha/2}},
\end{equation*}
for all  $t>0$ and $\lambda>0$. Throughout this paper we will often write $S_{1,\alpha/2}$ for $S_1$ to emphasize the $\alpha$ dependence. We write $Z\eid Y$ for two random variables $Z,Y$ with values in $\Rd$ to mean that they are equal in distribution or have the same law. That is, for any Borel set $B\subset \Rd$, $P(Z\in B)=P(Y\in B)$, where $Z,Y$ could be defined on different probability spaces. Our analogue result to (\ref{Ba.Sab trace}) for the fractional Laplacian $\fract$ is provided by the following theorem.

\begin{thm}\label{mainthm}
Let $0<\alpha<2$ be given. Suppose $V\in \calS(\Rd)$ and denote the fractional Laplacian and its associated fractional Schr\"odiner operator by $\alpf{H}=\fract$ and $H_{V}=\alpf{H}+V$, respectively.   Denote the heat kernel for $\fract$ by $\palp(x)$ (see \eqref{Scal.prop} below).  Assume that $M\geq 1$ is an integer  satisfying $M<\frac{d+\alpha}{2}$. 
\begin{itemize}
\item[(a)] Given $J\geq 2$, there exists a bounded function $R_{J+1}^{(\alpha)}(t)$, $0<t<1$, such that
\begin{align}\label{myexpansion}
\frac{\trace}{\palp(0)}= -t\ioRd V(\theta)d\theta + \mysum{j}{2}{J} \mysum{n}{0}{M-1}(-1)^{n+j}C_{n,j}^{(\alpha)}(V)
t^{\frac{2n}{\alpha}+j}+t^{\Phi_{J+1}^{(\alpha)}(M)}R_{J+1}^{(\alpha)}(t)
\end{align} 
where 
$$\Phi_{J+1}^{(\alpha)}(M)=\min\set{J+1,2+\frac{2M}{\alpha}},$$ and the constants $C_{n, j}^{(\alpha)}(V)$ are given by
\begin{align*}\label{quick definition}
C_{n,j}^{(\alpha)}(V)&=\frac{\const}{ (2\pi)^{jd} n!}\int_{I_{j}}\iogRd{(j-1)} \texpt{ S_{1,\frac{\alpha}{2}}^{-d/2}\set{\Lj}^{n}}\opV d\theta_i d\lambda_i d\lambda_j,  \\
\Lj&= \mysum{k}{1}{j-1}\spS{\incmt{k}{k+1}}\abs{ \mysum{i}{1}{k}\theta_i}^2 
-\frac{1}{S_{1,\frac{\alpha}{2}}} |\mysum{k}{1}{j-1}\spS{\incmt{k}{k+1}}\mysum{i}{1}{k}\theta_i|^2, \,\, and \,\,
 \const= \frac{\pi^{d/2}}{ p_{1}^{(\alpha)}(0)}, \nonumber
\end{align*}
where the $\lambda_k's$ are as in \eqref{lambdas}. Moreover, the random variables 
$\spS{\incmt{1}{2}}$, $\spS{\incmt{2}{3}}$,...,$\spS{\incmt{j-1}{j}}$, $\spS{1-(\incmt{1}{j})}$ are  independent and satisfy
\begin{align*}
\spS {1-(\incmt{1}{j})} + \mysum{k}{1}{j-1}\spS{\incmt{k}{k+1}}&=S_{1,\frac{\alpha}{2}}
 \end{align*}
and $\spS{l}\eid S_{l}$ for any $l\in\set{{1-(\incmt{1}{j}),\incmt{k}{k+1}}}_{k=1}^{j-1}$.\\
\item[(b)] For any $j\geq 2$ and $1\leq n \leq M$, $$\lim\limits_{\alpha\uparrow2}C_{n,j}^{(\alpha)}(V)=C_{n,j}^{(2)}(V).$$
\end{itemize}
\end{thm}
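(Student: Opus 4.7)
The plan is to combine Dyson's formula for the perturbed semigroup with the subordination identity $\palp(x)=\texpt{p_{S_{t,\alpha/2}}^{(2)}(x)}$, work on the Fourier side, and Taylor expand in the natural small parameter $t^{2/\alpha}$. First I would iterate Duhamel's formula to obtain
\begin{equation*}
e^{-tH_{V}}-e^{-t\alpf{H}}=\mysum{j}{1}{\infty}(-t)^{j}\int_{I_{j}}e^{-(1-\lambda_{1})t\alpf{H}}V\cdots V\,e^{-\lambda_{j}t\alpf{H}}\,d\lambda,
\end{equation*}
take the cyclic trace (so the first and last propagators merge into one of combined time $t(1-(\incmt{1}{j}))$), and evaluate each trace in Fourier variables. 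Writing $V$ through $\wh{V}(\theta_{k})$ and $p_{s}^{(\alpha)}(x)=(2\pi)^{-d}\int e^{i\xi\cdot x}e^{-s|\xi|^{\alpha}}\,d\xi$ for the propagators, the $x$-integrations enforce the momentum-conservation constraint $\theta_{1}+\cdots+\theta_{j}=0$, and setting $\theta_{j}=-\mysum{i}{1}{j-1}\theta_{i}$ reduces the $j$-th summand to the factor $\opV$ times a single remaining $\xi$-integral.

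Next, for each of the $j$ propagators I would use $e^{-s|\xi|^{\alpha}}=\texpt{e^{-S_{s}|\xi|^{2}}}$, realized through the disjoint increments of one $\alpha/2$-subordinator: these are independent, jointly distributed as $\spS{\cdot}\eid S_{\cdot}$, and sum to $S_{1,\alpha/2}$ once the scaling $S_{tr}\eid t^{2/\alpha}\spS{r}$ is used to pull out a single factor of $t^{2/\alpha}$. The remaining $\xi$-integral is Gaussian, and completing the square produces the prefactor $(\pi/S_{1,\alpha/2})^{d/2}t^{-d/\alpha}$ together with the exponent $t^{2/\alpha}\Lj$; the subtractive second piece in $\Lj$ is precisely what makes the quadratic form nonnegative. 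Dividing by $\palp(0)=t^{-d/\alpha}p_{1}^{(\alpha)}(0)=\pi^{d/2}t^{-d/\alpha}/\const$ and truncating the outer sum at $j=J$ (the tail being $\mathcal{O}(t^{J+1})$ by the standard $\|V\|_{\infty}^{j}/j!$ Dyson estimate) yields
\begin{equation*}
\frac{\trace}{\palp(0)}=-t\ioRd V\,d\theta+\mysum{j}{2}{J}(-t)^{j}\frac{\const}{(2\pi)^{jd}}\int_{I_{j}}\iogRd{(j-1)}\opV\,\texpt{S_{1,\alpha/2}^{-d/2}e^{-t^{2/\alpha}\Lj}}\,d\theta\,d\lambda+\mathcal{O}(t^{J+1}).
\end{equation*}

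I would then expand the exponential inside the expectation by $e^{-x}=\mysum{n}{0}{M-1}(-x)^{n}/n!+R_{M}(x)$ with $|R_{M}(x)|\leq x^{M}/M!$ valid for $x\geq 0$, which applies because $\Lj\geq 0$ by Cauchy--Schwarz in the $\spS{\cdot}$-weighted inner product. The retained terms produce exactly $(-1)^{n+j}C_{n,j}^{(\alpha)}(V)\,t^{j+2n/\alpha}$. The main technical hurdle is showing the retained constants and the Taylor remainder are finite: estimating $\Lj\leq C\,S_{1,\alpha/2}\,Q(\theta)$ for a polynomial $Q$ in $\theta_{1},\ldots,\theta_{j-1}$, the integrand is dominated by $|\opV|\,Q^{n}\,\texpt{S_{1,\alpha/2}^{n-d/2}}$; the $\theta$-integral is finite by the Schwartz assumption on $V$, and the subordinator moment is finite precisely when $n-d/2<\alpha/2$, because the $\alpha/2$-stable subordinator has finite moments of every order strictly less than $\alpha/2$, which is exactly the hypothesis $M<(d+\alpha)/2$. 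Combining the Dyson tail $t^{J+1}$ with the Taylor tail $t^{j+2M/\alpha}$ (dominated at $j=2$) produces the stated $t^{\Phi_{J+1}^{(\alpha)}(M)}$ with $R_{J+1}^{(\alpha)}(t)$ uniformly bounded on $(0,1)$.

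For part (b), I would pass to the limit $\alpha\uparrow 2$ inside $C_{n,j}^{(\alpha)}(V)$ by dominated convergence. From $\texpt{e^{-\lambda S_{1,\alpha/2}}}=e^{-\lambda^{\alpha/2}}\to e^{-\lambda}$ we get $S_{1,\alpha/2}\to 1$ in distribution; a Skorokhod coupling then yields $\spS{l}\to l$ almost surely, so $\Lj\to L_{j}^{(2)}(\lambda,\theta)$ pointwise, while $p_{1}^{(\alpha)}(0)=\texpt{(4\pi S_{1,\alpha/2})^{-d/2}}\to(4\pi)^{-d/2}$, giving $\const\to(2\pi)^{d}=C_{d,2}$. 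The only nontrivial point is producing an integrable majorant that is uniform for $\alpha$ in a left-neighborhood of $2$; this is supplied by the same polynomial-in-$\theta$ bound from the previous step together with uniform-in-$\alpha$ finiteness of $\texpt{S_{1,\alpha/2}^{n-d/2}}$, which holds because $M<(d+\alpha)/2$ is an open condition.
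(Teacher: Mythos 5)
Your proposal follows essentially the same architecture as the paper: Duhamel iteration, passage to Fourier variables, the subordination identity $e^{-s|\xi|^{\alpha}}=\texpt{e^{-S_{s}|\xi|^{2}}}$ realized through disjoint increments of one $\alpha/2$-subordinator, completion of the square to produce $\const\palp(0)\texpt{S_{1,\alpha/2}^{-d/2}e^{-t^{2/\alpha}\Lj}}$, Taylor expansion of the exponential, and the moment condition $n-d/2<\alpha/2$ for finiteness. The one purely organizational difference is that you work directly with the operator-level Dyson series and take the trace cyclically, whereas the paper derives the trace identity by Fourier-transforming the heat equations for $\pH$ and $\palp$ (Proposition \ref{Fourier-trace-formula}) and iterating the resulting integral equation for $\wh{\pH}-\wh{\palp}$; the paper also keeps a finite iteration and bounds a single remainder $r_{J+1}(t)$ via the Feynman--Kac domination $p_{t}^{H_{V}}\leq e^{t\|V\|_{\infty}}\palp$, rather than truncating an infinite series, but these are equivalent.

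The place where your proposal has a genuine gap is part (b). Skorokhod coupling plus almost-sure convergence $h_{n,d}(X_{r})\to h_{n,d}(X)$ does \emph{not} by itself justify $\texpt{h_{n,d}(X_{r})}\to\texpt{h_{n,d}(X)}$: you still need a dominating random variable \emph{on the coupled space}, and the bound $h_{n,d}(X_{r})\leq S_{1,\alpha_{r}/2}^{\,n-d/2}\,Q_{n}(\theta)$ depends on $r$, so uniform finiteness of the moments $\texpt{S_{1,\alpha_{r}/2}^{\,n-d/2}}$ does not produce an $r$-independent integrable majorant (for $n-d/2<0$ the right-hand side can be large on the event that $S_{1,\alpha_{r}/2}$ is small, and that event is not controlled uniformly by a.s.\ convergence alone). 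The paper closes precisely this gap by replacing ``DCT after coupling'' with uniform integrability: it proves $\sup_{r}\texpt{h_{n,d}(X_{r})^{p}}<\infty$ for some $p>1$ (exploiting the ``open condition'' you mention, together with monotonicity of the Gamma function to make the bound uniform as $\alpha_{r}\uparrow 2$) and then invokes the standard fact that weak convergence plus UI implies convergence of first moments. You would also need to treat the borderline case $n-d/2=1/2$ (the only way $n-d/2>0$ can occur here) separately, as the paper does, since the crude bound then requires $p$ strictly less than $\alpha_{r}$ and the argument must be phrased to keep $p$ bounded away from $2$. Once the UI step is inserted, the rest of your outline (pointwise convergence of $\Lj$, convergence of $\const$, and the outer dominated convergence in $(\theta,\lambda)$ using the polynomial majorant) matches the paper.
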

\bigskip

We note that when $\alpha=2$ the last Theorem remains true and $\spS{\incmt{k}{k+1}}=\lambda_k-\lambda_{k+1}$ and the condition on $d$ and $M$ is not needed. The reason for this is that in the later case, $S_t=t$ and  then $S_{1,1}=1$.  What part (b) in the theorem proves is that our results are robust. 

To see the connection to the Ba\~{n}uelos and S\'{a} Barreto result more clearly, we state the following theorem which is an immediate  consequence of Theorem \ref{mainthm} (see \S\ref{sec:proof.theorem}) and which resembles (\ref{Ba.Sab trace}) more closely. 
 
\begin{thm}\label{2.mainthm}
Under the same conditions of Theorem \ref{mainthm}, we have
\begin{eqnarray}\label{resembleformula}
\frac{\trace}{\palp(0)} &=&-t\ioRd V(\theta)d\theta\,\,\,+ \sum\limits_{\substack {\frac{2n}{\alpha}+j<\Phi_{J+1}^{(\alpha)}(M) \\ 2\leq j\leq J,\,\, 0\leq n\leq M-1}}(-1)^{n+j}C_{n,j}^{(\alpha)}(V)t^{\frac{2n}{\alpha}+j}\\
&+&\mathcal{O}(t^{\Phi_{J+1}^{(\alpha)}(M)}),\nonumber
\end{eqnarray}
as $\tgo$. 
\end{thm}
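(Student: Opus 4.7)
The plan is to derive Theorem \ref{2.mainthm} as a direct consequence of Theorem \ref{mainthm}(a) by separating out the terms in the double sum whose exponents already exceed $\Phi_{J+1}^{(\alpha)}(M)$ and absorbing them into the error term. First, I would start from the expansion
\begin{equation*}
\frac{\trace}{\palp(0)}= -t\ioRd V(\theta)d\theta + \mysum{j}{2}{J} \mysum{n}{0}{M-1}(-1)^{n+j}C_{n,j}^{(\alpha)}(V) t^{\frac{2n}{\alpha}+j}+t^{\Phi_{J+1}^{(\alpha)}(M)}R_{J+1}^{(\alpha)}(t)
\end{equation*}
provided by Theorem \ref{mainthm}(a), valid for $0<t<1$, where $R_{J+1}^{(\alpha)}(t)$ is bounded on $(0,1)$ and each $C_{n,j}^{(\alpha)}(V)$ is a finite constant independent of $t$.

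Next, I would partition the index set
\begin{equation*}
\calS = \set{(n,j)\,:\, 2\leq j\leq J,\,\, 0\leq n\leq M-1}
\end{equation*}
into the \emph{low} part $\calS_{\textrm{low}}=\set{(n,j)\in \calS : \frac{2n}{\alpha}+j<\Phi_{J+1}^{(\alpha)}(M)}$ and the \emph{high} part $\calS_{\textrm{high}}=\calS\setminus \calS_{\textrm{low}}$. The sum over $\calS_{\textrm{low}}$ is exactly the explicit sum appearing in \eqref{resembleformula}. It remains to show that the sum over $\calS_{\textrm{high}}$ contributes $\mathcal{O}(t^{\Phi_{J+1}^{(\alpha)}(M)})$ as $\tgo$.

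For this step, observe that for each $(n,j)\in \calS_{\textrm{high}}$ and each $0<t<1$ one has $t^{\frac{2n}{\alpha}+j}\leq t^{\Phi_{J+1}^{(\alpha)}(M)}$, so
\begin{equation*}
\abs{\sum_{(n,j)\in \calS_{\textrm{high}}}(-1)^{n+j}C_{n,j}^{(\alpha)}(V)t^{\frac{2n}{\alpha}+j}}\leq \Bigl(\sum_{(n,j)\in \calS_{\textrm{high}}}\abs{C_{n,j}^{(\alpha)}(V)}\Bigr) t^{\Phi_{J+1}^{(\alpha)}(M)},
\end{equation*}
and the bracketed sum is a finite constant because $\calS_{\textrm{high}}$ is a finite set and each $C_{n,j}^{(\alpha)}(V)$ is finite. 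Combining this estimate with the existing remainder $t^{\Phi_{J+1}^{(\alpha)}(M)}R_{J+1}^{(\alpha)}(t)$, which is itself $\mathcal{O}(t^{\Phi_{J+1}^{(\alpha)}(M)})$ as $\tgo$ since $R_{J+1}^{(\alpha)}$ is bounded on $(0,1)$, yields \eqref{resembleformula}.

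The only mild obstacle is the bookkeeping: one must verify that the low/high partition is nonempty or empty depending on whether $\Phi_{J+1}^{(\alpha)}(M)=J+1$ or $\Phi_{J+1}^{(\alpha)}(M)=2+\frac{2M}{\alpha}$, and in each of the two regimes confirm that the indices $(n,j)$ satisfying $\frac{2n}{\alpha}+j\geq \Phi_{J+1}^{(\alpha)}(M)$ are accounted for; but in either case the argument above is uniform, and no further analysis of the coefficients $C_{n,j}^{(\alpha)}(V)$ beyond their finiteness (already established in Theorem \ref{mainthm}) is required.
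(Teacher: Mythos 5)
Your proof is correct and is essentially the same as the paper's: Theorem \ref{2.mainthm} is derived from the expansion in Theorem \ref{mainthm}(a) by splitting the double sum into the indices with $\frac{2n}{\alpha}+j<\Phi_{J+1}^{(\alpha)}(M)$ and those with $\frac{2n}{\alpha}+j\geq\Phi_{J+1}^{(\alpha)}(M)$, with the latter (finitely many terms with finite coefficients) absorbed, together with the bounded remainder $t^{\Phi_{J+1}^{(\alpha)}(M)}R_{J+1}^{(\alpha)}(t)$, into the $\mathcal{O}(t^{\Phi_{J+1}^{(\alpha)}(M)})$ error.
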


 Now, to obtain  (\ref{Ba.Sab trace}) from the last theorem we note again that for $\alpha=2$ we have no restrictions on $J$ and $M$ other than $J\geq 2$ and $M\geq1$. Also observe that $\Phi_{J+1}^{(2)}(M)=\min\set{J+1,M+2}$. Then, by taking $M=J-1$ we conclude $\Phi_{J+1}^{(2)}(J-1)=J+1$. As a consequence of \eqref{resembleformula}, we arrive at 
\begin{equation*}
\frac{Tr(e^{-tH_V} - e^{-tH_{2}})}{ p^{(2)}_t(0)}= -t\ioRd V(\theta)d\theta\,\,\, +\sum\limits_{\substack {n+j<J+1 \\ 2\leq j\leq J,\,\, 0\leq n\leq J-2}}(-1)^{n+j}C_{n,j}^{(2)}(V)t^{n+j}+\mathcal{O}(t^{J+1}),
\end{equation*}
as $\tgo$.
But, notice that in this case, 
\begin{equation*}
\sum\limits_{\substack {n+j<J+1 \\ 2\leq j\leq J, 0\leq n\leq J-2}}(-1)^{n+j}C_{n,j}^{(2)}(V)t^{n+j}=\mysum{l}{2}{J}c_l(V)t^l, 
\end{equation*}
and (\ref{Ba.Sab trace}) follows.

In \S \ref{sec:Particular.expansions} we provide more specific expansion formulas 
for $\alpha$ \textquoteright s of the form $2/k$, where $k$ positive integer. These examples are the only cases  where  $\frac{2n}{\alpha}+j$ are integers for all $n,j$, because for the particular case $n=1$ and $j=2$ there exists an integer $m_0\geq3$ such that $\frac{2}{\alpha}+2=m_0$, which implies that $\alpha=\frac{2}{m_0-2}$.
 
The assumption $M<\frac{d+\alpha}{2}$ in our theorem is sufficient to prove two crucial facts needed in our expansion.  Namely, (1) that the coefficients in  Theorem \ref{mainthm} are finite  and (2) that the remainders that appear in the definition of $R_{J+1}^{(\alpha)}(t)$ (see \S \ref{sec:boundedcoeff.rem} below) are bounded for $t\in (0,1)$. 
Since $M\geq 1$, the condition $M<\frac{d+\alpha}{2}$ determines, for a given $d$, the range of $\alpha$'s  for which Theorem \ref{mainthm} holds. Thus, for example when $M=1$ and $d=1$, Theorem \ref{mainthm} only permits the range $1<\alpha<2$.  In \S \ref {sec:improvement} we will show how a modified version of this condition (namely $\frac{M}{2}-\frac{d}{4}<\frac{\alpha}{2}$) can widen the range of $\alpha$'s  for which Theorem \eqref{mainthm} remains true when $d=1,2,3$ and $M=1,2$.
 
A particular case of Theorem \ref{mainthm} and our results in \S \ref{sec:improvement} is the following corollary which extends the results in \cite{Ba.Sel} where the second coefficient is computed.
\begin{cor}\label{maincor}
\hspace*{20mm}

\bigskip
{\bf (i)} For $d=1$ ,
\begin{eqnarray}
\frac {\trace}{\palp(0)} + t\int_{\R} V(\theta)d\theta - \frac{t^{2}}{2!}\int_{\R} V^2(\theta)d\theta + \frac{t^3}{3!}\int_{\R} V^3(\theta)d\theta &=& \nonumber \\
\left\{\begin{array}{cc}
       {\mathcal O}( t^{2 + \frac{2}{\alpha}}), &  \mbox{ if $\alpha\in(1,2)$}, \nonumber\\
       {\mathcal O}(t^4), & \mbox{ if $\alpha\in(1/2,1]$}, 
       \end{array}   
\right. 
\end{eqnarray} 
as $\tgo$.
\bigskip

{\bf (ii)} For $d=1$ and $\frac{3}{2}<\alpha<2$, we have
\begin{align*}
\frac{\trace}{p_t^{(\alpha)}(0)} + &t\int_{\R} V(\theta)d\theta - \frac{t^{2}}{2!}\int_{\R} V^2(\theta)d\theta  \nonumber \\
+ &\frac{t^{3}}{3!}\int_{\R} V^3(\theta)d\theta +{\mathcal L}_{1,\alpha}t^{2+\frac{2}{\alpha}}\int_{\R}|\nabla V(\theta)|^2\,d\theta={\mathcal O}(t^4),
\end{align*}
as $\tgo$.
\bigskip

{\bf (iii)} For $d\geq2$,
\begin{eqnarray}
\frac {\trace}{\palp(0)} + t\ioRd V(\theta)d\theta - \frac{t^{2}}{2!}\ioRd V^2(\theta)d\theta + \frac{t^3}{3!}\ioRd V^3(\theta)d\theta &=& \nonumber \\
\left\{\begin{array}{cc}
       {\mathcal O}( t^{2 + \frac{2}{\alpha}}), &  \mbox{ if $\alpha\in(1,2)$}, \nonumber\\
       {\mathcal O}(t^4), & \mbox{ if $\alpha\in(0,1]$}, 
       \end{array}   
\right. 
\end{eqnarray} 
as $\tgo$.
\bigskip

{\bf (iv)} For $d\geq2$ and $1<\alpha<2$,
\begin{align*}
\frac {\trace}{\palp(0)} +&t\ioRd V(\theta)d\theta - \frac{t^{2}}{2!}\ioRd V^2(\theta)d\theta \nonumber \\
+ &\frac{t^3}{3!}\ioRd V^3(\theta)d\theta 
+{\mathcal L}_{d,\alpha}\,t^{2+\frac{2}{\alpha}}\,\int_{\Rd}|\nabla V(\theta)|^2\,d\theta  ={\mathcal O}(t^4),
\end{align*} 
as $\tgo$.
\bigskip

{\bf (v)} For $d\geq 3$, $\frac{2}{3} < \alpha\leq1$, 
\begin{eqnarray*}
\frac{\trace}{p_t^{(\alpha)}(0)} &+& t\ioRd V(\theta)d\theta - \frac{t^{2}}{2!}\ioRd V^2(\theta)d\theta +
\frac{t^{3}}{3!}\ioRd V^3(\theta)d\theta\\
&-&\frac{t^{4}}{4!}\ioRd V^4(\theta)d\theta +{\mathcal L}_{d,\alpha}\,t^{2+\frac{2}{\alpha}} \,\int_{\Rd}|\nabla V(\theta)|^2\,d\theta 
={\mathcal O}(t^5),
\end{eqnarray*}
as $\tgo$. 

Also for $d\geq 3$ and $\frac{1}{2}\leq\alpha\leq\frac{2}{3}$,
\begin{eqnarray*}
\frac{\trace}{p_t^{(\alpha)}(0)} &+& t\ioRd V(\theta)d\theta - \frac{t^{2}}{2!}\ioRd V^2(\theta)d\theta +
\frac{t^{3}}{3!}\ioRd V^3(\theta)d\theta\\
&-&\frac{t^{4}}{4!}\ioRd V^4(\theta)d\theta
={\mathcal O}(t^5),
\end{eqnarray*}
as $\tgo$.
\bigskip

{\bf (vi)} 
For $d\geq4$ and $0<\alpha<\frac{1}{2}$,
\begin{eqnarray*}
\frac{\trace}{p_t^{(\alpha)}(0)} &+& t\ioRd V(\theta)d\theta - \frac{t^{2}}{2!}\ioRd V^2(\theta)d\theta +
\frac{t^{3}}{3!}\ioRd V^3(\theta)d\theta\\
&-&\frac{t^{4}}{4!}\ioRd V^4(\theta)d\theta
={\mathcal O}(t^5),
\end{eqnarray*}
as $\tgo$.
\bigskip

The constants ${\mathcal{L}}_{d,\alpha}$ are defined as follows:
$${\mathcal{L}}_{d,\alpha} =\frac{ \const K_1(d,\alpha)}{(2\pi)^{d}},\,\,\,\,\, \const= \frac{\pi^{d/2}}{ p_{1}^{(\alpha)}(0)},$$
 with
 \begin{align*}
K_1(d,\alpha)= \int_0^1 \int_0^{\lambda_1} \texpt{\frac{ \spS{1-w} \spS{w}} { 
    ( \spS{1-w} + \spS{w} )^{  1+\frac{d}{2} } } }dw d\lambda_1.
\end{align*}
\end{cor}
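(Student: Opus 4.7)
I would derive each of the six parts by specializing the expansion formula \eqref{resembleformula} of Theorem \ref{2.mainthm} (and, where needed, its strengthening from \S \ref{sec:improvement}) to a concrete choice of $J$ and $M$, and then explicitly evaluating the very few coefficients $C_{n,j}^{(\alpha)}(V)$ that actually contribute. Only four coefficients ever appear, namely $C_{0,2}^{(\alpha)}$, $C_{0,3}^{(\alpha)}$, $C_{0,4}^{(\alpha)}$ and $C_{1,2}^{(\alpha)}$.

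\textbf{Computing the coefficients.} For $n=0$ the bracket $\{\Lj\}^n$ is identically $1$, so $\texpt{S_{1,\alpha/2}^{-d/2}\{\Lj\}^n}=\texpt{S_{1,\alpha/2}^{-d/2}}$. Combined with $\palp(0)=(4\pi)^{-d/2}\texpt{S_{1,\alpha/2}^{-d/2}}$ and the definition $\const=\pi^{d/2}/\palp(0)$, this gives $\const\,\texpt{S_{1,\alpha/2}^{-d/2}}=(2\pi)^{d}$; using also $|I_j|=1/j!$ one obtains
$$
C_{0,j}^{(\alpha)}(V)=\frac{1}{(2\pi)^{(j-1)d}\,j!}\iogRd{(j-1)}\wh V\Bigl(-\mysum{i}{1}{j-1}\theta_i\Bigr)\myprod{i}{1}{j-1}\wh V(\theta_i)\,d\theta_1\cdots d\theta_{j-1}=\frac{1}{j!}\ioRd V^j(\theta)\,d\theta,
$$
by iterated Fourier inversion, independently of $\alpha$. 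For $j=2$ only $k=1$ contributes to $\Lj$, and the sum identity $\spS{\incmt{1}{2}}+\spS{1-(\incmt{1}{2})}=S_{1,\alpha/2}$ recasts it as $L_2^{(\alpha)}(\lambda,\theta)=\spS{\incmt{1}{2}}\,\spS{1-(\incmt{1}{2})}\,S_{1,\alpha/2}^{-1}|\theta_1|^2$. Setting $w=\incmt{1}{2}$ and integrating $\lambda$ over $I_2$ produces exactly the constant $K_1(d,\alpha)$ of the statement, while Parseval gives $\ioRd|\theta_1|^2|\wh V(\theta_1)|^2d\theta_1=(2\pi)^{d}\ioRd|\nabla V|^2$; collecting the prefactor $\const/(2\pi)^{2d}$ yields $C_{1,2}^{(\alpha)}(V)=\mathcal L_{d,\alpha}\ioRd|\nabla V(\theta)|^2d\theta$.

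\textbf{Assembling the cases.} It now remains to pick $(J,M)$ in each part, enumerate the pairs $(n,j)$ with $2\leq j\leq J$, $0\leq n\leq M-1$, and $\frac{2n}{\alpha}+j<\Phi_{J+1}^{(\alpha)}(M)$, insert the sign $(-1)^{n+j}$, and rearrange. The choices are $(J,M)=(3,1)$ for (i) and (iii), where $\Phi_4^{(\alpha)}(1)$ equals $2+2/\alpha$ when $\alpha>1$ and equals $4$ when $\alpha\leq 1$ and only the pairs $(0,2),(0,3)$ contribute; $(J,M)=(3,2)$ for (ii) and (iv), so that $\Phi_4^{(\alpha)}(2)=4$ and the pair $(1,2)$ is added precisely because $\alpha>1$; $(J,M)=(4,2)$ for the first subcase of (v), where $\Phi_5^{(\alpha)}(2)=5$ and both $(0,4)$ and $(1,2)$ appear, the latter surviving iff $\alpha>2/3$; and $(J,M)=(4,1)$ for the second subcase of (v) and for (vi), where $\Phi_5=5$ and only $(0,2),(0,3),(0,4)$ appear. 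The improved hypothesis $\frac{M}{2}-\frac{d}{4}<\frac{\alpha}{2}$ from \S \ref{sec:improvement} is invoked in exactly those instances where the original bound $M<(d+\alpha)/2$ would fail, namely in the $\alpha\leq 1$ subcase of (i), in (ii), and in the low-dimensional portions of (iv) and (v).

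\textbf{Main obstacle.} The only step that is not pure bookkeeping is the evaluation of $C_{1,2}^{(\alpha)}(V)$: one has to use both the additivity $\spS{a}+\spS{b}=S_{1,\alpha/2}$ and the marginal distributional identities $\spS{l}\eid S_l$ \emph{inside} the expectation, so that the joint behavior of the ordered increments folds into the single variable $w=\incmt{1}{2}$ appearing in $K_1(d,\alpha)$. Only after this reduction does Parseval deliver the Dirichlet-form-like quantity $\ioRd|\nabla V|^2$. All remaining steps are routine counting of exponents and an application of Fourier inversion.
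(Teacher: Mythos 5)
Your proposal is correct and follows essentially the same route as the paper: specialize Theorem~\ref{mainthm}/Theorem~\ref{2.mainthm} to concrete $(J,M)$ pairs, invoke the improved admissibility condition from \S\ref{sec:improvement} where $M<\tfrac{d+\alpha}{2}$ fails, compute $C_{0,j}^{(\alpha)}=\tfrac{1}{j!}\int V^j$ (using $\const\,\texpt{S_{1,\alpha/2}^{-d/2}}=(2\pi)^d$ and iterated Fourier inversion) and $C_{1,2}^{(\alpha)}=\mathcal L_{d,\alpha}\int|\nabla V|^2$ (using the completed-square form $L_2^{(\alpha)}=\spS{\lambda_1-\lambda_2}\spS{1-(\lambda_1-\lambda_2)}S_{1,\alpha/2}^{-1}|\theta_1|^2$ and Parseval), then enumerate the surviving $(n,j)$ pairs against the threshold $\Phi_{J+1}^{(\alpha)}(M)$; this matches the paper's computations in \S\ref{sec:explicitcoeff} and \S\ref{sec:proofcor}.

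The one genuine difference is parameter bookkeeping. The paper uses $(J,M)=(3,1)$ for Case~$M{=}1$ and a single $(J,M)=(4,2)$ for \emph{all} of Case~$M{=}2$, then absorbs the extra $(0,4)$ and $(1,3)$ terms into $\mathcal O(t^4)$ for parts~(ii) and~(iv); you instead choose the minimal $(J,M)$ for each part (e.g.\ $(3,2)$ for (ii),(iv); $(4,1)$ for the second half of~(v) and for~(vi)). Your choices produce no spurious terms and, notably, taking $(J,M)=(4,1)$ for the second subcase of~(v) handles the endpoint $\alpha=\tfrac12$, $d=3$ cleanly via the unconditional bound $M=1<\tfrac{d+\alpha}{2}$; the paper's reliance on Lemma~\ref{smallcase}(ii) there formally requires $\alpha>\tfrac12$ strictly, so your variant closes a small boundary gap. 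Both routes are sound; the difference is cosmetic save for that endpoint.
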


 The question of whether our result holds regardless of the choice of $d$ and $M$ as in (\ref{Ba.Sab trace}) remains an interesting open problem which reduces to verifying that the expectations in the formula for $C_{n,j}^{(\alpha)}(V)$ are finite for all $n$ and $d$. 

 To gain a better understanding of the applications of the robustness result (part $(b)$)  in Theorem \ref{mainthm}, which is  proved by means of weak convergence, consider the following special case of Corollary \ref{maincor}. For all $d\geq1$ and $\frac{3}{2}<\alpha<2$, we have 
\begin{align*}
\frac {\trace}{\palp(0)} +&t\ioRd V(\theta)d\theta - \frac{t^{2}}{2!}\ioRd V^2(\theta)d\theta \nonumber \\
+ &\frac{t^3}{3!}\ioRd V^3(\theta)d\theta 
+{\mathcal L}_{d,\alpha}\,t^{2+\frac{2}{\alpha}}\, \int_{\Rd}|\nabla V(\theta)|^2\,d\theta   ={\mathcal O}(t^4),
\end{align*} 
as $\tgo$.  
Interestingly, due to part (b)  we see that ${\mathcal L}_{d,\alpha}\rightarrow\frac{1}{12}$ as $\alpha\uparrow2$, despite of the fact that  thus  far we are only able to provide a representation which enables us to conclude that ${\mathcal L}_{d,\alpha}$ are finite and strictly positive with no other explicit knowledge for this quantity.

The rest of this paper is organized as follows. In \S\ref{sec:stable-sub}, we introduce notation, definitions and probabilistic facts about stable processes and subordinators. In \S\ref{sec:trace&Fourier}, we find  formulas (and bounds) for the difference of the Fourier transform of the heat kernels $\wh{\pH}(\xi,\eta)-\wh{\palp}(\xi,\eta)$ that will allow us to express the trace in terms of Fourier transforms. In \S\ref{sec:boundedcoeff.rem}, we prove the boundedness of the (J+1)--th term  in the trace formula found in \S\ref{sec:trace&Fourier} for $t\in (0,1)$.
In \S\ref{sec:trace-sub}, we simplify the trace formula given in \S\ref{sec:trace&Fourier} by finding an explicit value of certain integrals in terms of the subordinator process. In \S\ref{sec:coeffic.sect}, we use an elementary Taylor expansion formula for the exponential function to define the coefficients and remainders involved in Theorem \ref{mainthm}. In \S\ref{sec:improvement}, we give an improvement of Theorem \ref{mainthm} when $d=1,2,3$. The proof of Theorem \ref{mainthm} is given in 
\S\ref{sec:proof.theorem}. In \S\ref{sec:explicitcoeff}, we compute some coefficients and provide a representation for the constants ${\mathcal {L}}_{d,\alpha}$. In \S\ref{sec:proofcor}, we prove Corollary \ref{maincor}. Explicit expansion and examples are provided for some particular $\alpha$'s in \S \ref{sec:Particular.expansions}.
Finally, in \S\ref{sec:rel.proc} and \S \ref{sec:mixed.proc}, we extend the result i) and iii) in Corollary \ref{maincor} to $\alpha$--relativistic processes and to mixed--stable processes, respectively. 

\section{Stable Processes and subordinator.} \label{sec:stable-sub}

Let $X=\set{X_t}_{t\geq0}$ be the $d$-dimensional symmetric $\alpha$--stable process of order $\alpha \in (0,2]$. The process X has stationary independent increments, which means that if $s,t>0$, then the increment $X_{t+s}-X_{t}$ is independent of the process $(X_u,0\leq v\leq t)$ and has the same law as $X_s$. Moreover, its transition density  $\palp(x,y)=\palp(x-y)$, $t>0$, $x,y\in \Rd$, is determined by its Fourier transform (characteristic function) as 

\begin{equation*}\label{CharF.Proc}
e^{-t\abs{\xi}^{\alpha}}=E^{0}[e^{-i <\xi,X_t> }]=\ioRd e^{-i<y,\xi>}\palp(y)dy,
\end{equation*}
for all $t>0$, $\xi\in\Rd$, 
where $P^{x}$ and $E^{x}$ denote the probability and expectation, respectively, of the process starting at $x$. Then, we have for any Borel set $A\subset \Rd$, 
 \[P^{x}(X_t\in A)= \int_{A} p_{t}^{(\alpha)}(x-y)dy.\]
Henceforth, $E$ will denote the expectation of both an arbitrary random variable or processes started at 0, whereas a.s will mean almost surely. 

We recall again that an $\alpha/2$--subordinator is a.s non-decreasing $[0,\infty)$-valued process $S=\set{S_{t}}_{t\geq 0}$ which  also has stationary, independent increments, starting at 0 and  uniquely determined by its Laplace Transform 

\begin{equation*}\label{Lap.Subor}
 \texpt{e^{-\lambda S_{t}}}=e^{ -t\lambda^{\alpha/2}},
\end{equation*}
for all  $t>0$ and $\lambda>0$.

Notice that the last equality implies that for all $\xi \in \Rd$,
\begin{equation}\label{CharF-LapS}
\texpt{ e^{- \abs{\xi}^2 S_{t}} }=e^{ -t \abs{\xi}^{\alpha}}.
\end{equation}
We point out that this equality is the link between the results in this paper and the results in \cite{Ba.Sab} and  it will be used several times throughout the paper. 

It is a standard fact (see \cite[p.22]{Bog}) that the $\alpha$-stable process X can be obtained as a random time change of Brownian motion where this random time is an $\alpha/2$-subordinator. In other words, we can write $X_t=B_{2S_{t}}$ where $B$ is a $d$-dimensional Brownian motion and $S$ is a $\alpha/2$-subordinator and these  are independent processes. Using this last fact, the Fourier inversion formula (see \S\ref{sec:trace&Fourier}), we see that 
\begin{align}\label{exptofS_1}
\palp(x)&=(2\pi)^{-d}\ioRd e^{i <\xi,x>}e^{-t \abs{\xi}^{\alpha}}d\xi\\
           &=\int_{0}^{\infty}(4\pi s)^{-d/2}e^{-\frac{\abs{x}^2}{4s}}\eta_{t}^{(\alpha/2)}(s)ds\nonumber\\
           &  = \texpt{p_{S_{t}}^{(2)}(x)}\nonumber,
\end{align}
where $p_{t}^{(2)}(x)=(4 \pi t)^{-d/2}e^{- \frac{|x|^2}{4t}}$ and $\eta_{t}^{(\alpha/2)}(s)$ correspond to the transition densities of the $d$-dimensional Brownian Motion process $\{ B_{2t} \}_{t>0}$ and the $\alpha/2$-subordinator, respectively.
It follows from (\ref{exptofS_1}) that $\palp(x)$ is radial, symmetric and decreasing in $x$. Moreover, these functions satisfy the following scaling property and inequality. 
 
\begin{equation}\label{Scal.prop}
\palp(x)=t^{-d/\alpha}p_{1}^{(\alpha)}(t^{-1/\alpha}x)\leq t^{-d/\alpha}p_{1}^{(\alpha)}(0),
\end{equation}
where
 \[p_{1}^{(\alpha)}(0)= \frac{w_d\Gamma(d/\alpha)}{(2\pi)^d\alpha}.\]
 Here and for the rest of the paper, $w_d$ denotes the surface area of the unit sphere in $\Rd$.
From equation (\ref{exptofS_1}), we also conclude that 
   
\begin{equation}\label{expl.expt.S1}
(4\pi)^{d/2}p_{1}^{(\alpha)}(0)=\texpt{S_{1,\frac{\alpha}{2}}^{-d/2}}.
\end{equation}
  
In fact, we claim that  for all $-\infty <\eta< \frac{\alpha}{2}$, 
\begin{equation}\label{gen.expt.S1}
\texpt{S_{1,\frac{\alpha}{2}}^{\eta}}=\frac{\Gamma(1-\frac{2\eta}{\alpha})}{\Gamma(1-\eta)}.
\end{equation}
To see this, we observe that $\left(ZS_{1,\frac{\alpha}{2}}^{-1}\right)^{\alpha/2}\eid Z$, where $Z=\exp(1)$ is an exponential random variable with parameter $1$ independent of $S_{1,\frac{\alpha}{2}}$.  By independence we have  
\begin{align*}
P( \left(ZS_{1,\frac{\alpha}{2}}^{-1}\right)^{\alpha/2}\leq \lambda)&=\int_{0}^{\infty}\left(\int_{0}^{\lambda^{2/\alpha}s}
e^{-u}du\right)\eta_1^{(\alpha/2)}(s)ds\\
&= 1-\texpt{ e^{ -\lambda^{2/\alpha} S_1 }}\\
&= 1-e^{\lambda}=P(Z\leq\lambda).
\end{align*}
Hence, it also follows by independence that
\[
\texpt{Z^{-\eta}}\texpt{S_{1,\frac{\alpha}{2}}^{\eta}}= \texpt{Z^{-\frac{\eta2}{\alpha}}},
\]
provided  $\texpt{Z^{-\eta}}$ and $\texpt{Z^{\frac{-\eta2}{\alpha}}}$ are both finite.  But, this only holds  when $-\infty <\eta< \frac{\alpha}{2}$, since 
\[ 0<\texpt{Z^{\gamma}}= \int_0^{\infty}s^{(\gamma+1)-1}e^{-s}ds=\Gamma(\gamma+1)<\infty ,\]
when $\gamma+1>0$.

The equalities (\ref{expl.expt.S1}) and (\ref{gen.expt.S1}) will be useful in proving the  finiteness for the coefficients and the boundedness of the function $R_{J+1}^{(\alpha)}(t)$ in Theorem \ref{mainthm}. We also mention that the condition given in Theorem \ref{mainthm} is derived from \eqref{gen.expt.S1}.

\section{Heat trace in terms of Fourier transform.}\label{sec:trace&Fourier}
 
Let $\wh{V}$ denote the Fourier transform of $V\in \calS(\Rd)$ with the normalization 
$$\wh{V}(\xi)= \ioRd e^{-i<x,\xi>}V(x)dx.$$
We note that because  of our definition of  $\wh{V}$, we have
\begin{enumerate}
\item[(i)] (Inversion formula)
\begin{equation*} \label{InversionForm}
V(x)=\frac{1}{(2\pi)^{d}}\ioRd e^{i<x,\xi>}\wh{V}(\xi)d\xi.
\end{equation*} 
\item[(ii)] For $f,g\in \calS (\Rd)$,
\begin{equation}\label{Plancherel}
\ioRd e^{-i<x,\xi >}f(x)g(x)dx= \frac{1}{(2\pi)^{d}}\ioRd \wh{f}(\theta)\wh{g}(\xi-\theta)d\theta.
\end{equation}
\end{enumerate}

We recall that the linear operator $\alpf{H}=(-\Delta)^{\alpha/2}$ with domain 
$$\set{f \in L^{2}(\Rd): \abs{\xi}^{\alpha}\wh{f}(\xi)\in L^{2}(\Rd)},$$
 $\alpha\in(0,2)$, is defined via Fourier transform by $\wh{\alpf{H}f}(\xi)=\abs{\xi}^{\alpha}\wh{f}(\xi)$. This operator, often referred to as the fractional Laplacian, is essentially a self-adjoint operator on $C_{0}^{\infty}(\Rd)$ with spectrum $Spec(\alpf{H})=[0,+\infty)$. For $V\in \calS(\Rd)$, we call  $H_{V}=(-\Delta)^{\frac{\alpha}{2}}+V$ the fractional Schr\"{o}dinger operator with potential V, where V acts as a multiplication operator. $H_{V}$ is self-adjoint in the domain of $\alpf{H}$ with $Spec(H_{V})=[0,+\infty)$. We write  $e^{-t\alpf{H}}$ and $e^{-tH_{V}}$ for the heat semigroups acting on $L^{2}(\Rd)$ with heat kernels given by $\palp(x,y)$ and $\pH(x,y)$, respectively. We refer the reader to  \cite{Barry,Davies} for general definition and spectral properties of the semigroup $e^{-tA}$ of the self-adjoint operator $A$ on Hilbert-space. For our purpose in this paper we recall the
 Feynman-Kac formula which gives the heat kernel for $H_{V}$.  That is, 
\begin{equation}\label{Fey.Kac.f}
\pH(x,y)=\palp(x,y)E_{x,y}^t\left[e^{-\int_{0}^{t}V(X_{s})ds}\right], 
\end{equation} 
where $E_{x,y}^t$ is the expectation with respect to the stable process (bridge) starting at $x$ and conditioned to be at $y$ at time $t$.  For more detail about formula (\ref{Fey.Kac.f}), see \cite{Ba.Sel,Barry}.

Our goal now is to derive a formula for  $\trace$ for the fractional Laplacian similar to the one  in \cite{Ba.Sab} for the Laplacian. 
    
\begin{prop}\label{Fourier-trace-formula}
Let $V\in\calS(\Rd)$, then
\begin{equation*}
\trace= \frac{1}{(2\pi)^d}\ioRd \left(\wh{\pH}(\xi,-\xi)-\wh{\palp}(\xi,-\xi)\right)d\xi.
\end{equation*}
\end{prop}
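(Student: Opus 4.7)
My plan is to reduce the identity to the standard fact that the trace of a nice integral operator equals the integral of its kernel on the diagonal, and then apply Fourier inversion in $2d$ variables.

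\textbf{Step 1 (Trace as diagonal integral).} First I would argue that the operator $e^{-tH_V}-e^{-t\alpf{H}}$ is trace class. Writing $\pH-\palp=\palp(x,y)\bigl(E_{x,y}^{t}[e^{-\int_0^tV(X_s)ds}]-1\bigr)$ from the Feynman-Kac formula \eqref{Fey.Kac.f}, and using $V\in\calS(\Rd)$ together with Jensen / Duhamel-type estimates (as in \cite{Ba.Sel}) to control the bracketed factor, the difference of kernels is continuous and integrable on the diagonal. Standard Mercer/trace-class theory then gives
\begin{equation*}
\trace=\ioRd\bigl(\pH(x,x)-\palp(x,x)\bigr)\,dx.
\end{equation*}

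\textbf{Step 2 (Fourier inversion in both variables).} Set $K_t(x,y)=\pH(x,y)-\palp(x,y)$ and let $\wh{K_t}(\xi,\eta)$ denote its Fourier transform in the two variables with the sign conventions of \eqref{Plancherel}:
\begin{equation*}
\wh{K_t}(\xi,\eta)=\iogRd{2}e^{-i\langle x,\xi\rangle-i\langle y,\eta\rangle}K_t(x,y)\,dx\,dy.
\end{equation*}
By the inversion formula,
\begin{equation*}
K_t(x,x)=\frac{1}{(2\pi)^{2d}}\iogRd{2}e^{i\langle x,\xi+\eta\rangle}\wh{K_t}(\xi,\eta)\,d\xi\,d\eta.
\end{equation*}

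\textbf{Step 3 (Collapse to the antidiagonal).} Integrating in $x$ and interchanging the order of integration (Fubini, justified by the decay obtained in Step~1 together with the Schwartz-type decay of $\wh{K_t}$ that the smoothness of $V$ transfers to the kernel), the $x$-integral produces $(2\pi)^d$ times a Dirac mass at $\xi+\eta=0$, so
\begin{equation*}
\ioRd K_t(x,x)\,dx=\frac{1}{(2\pi)^d}\ioRd \wh{K_t}(\xi,-\xi)\,d\xi,
\end{equation*}
which, combined with Step~1, is exactly the claimed identity. Equivalently (avoiding distributions), one writes $\wh{K_t}(\xi,-\xi)=\int\!\!\int e^{-i\langle x-y,\xi\rangle}K_t(x,y)\,dx\,dy$, applies Fubini and the scalar Fourier inversion formula in $\R^d$ to the variable $x-y$, and reads off the same equality.

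\textbf{Main obstacle.} The only nonroutine point is justifying the interchanges of integration and the trace-class property in Step~1; once one knows $K_t(x,y)$ and its Fourier transform decay sufficiently, the rest is bookkeeping. Since $V\in\calS(\Rd)$ and $\palp$ is smooth with good tails (via \eqref{Scal.prop} and the subordination representation \eqref{exptofS_1}), these integrability statements can be verified directly, and no cancellation beyond absolute convergence is required.
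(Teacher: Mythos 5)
Your argument is correct, and it is a genuinely different (and shorter) route than the paper's. The paper proves this identity by differentiating the heat kernels in $t$, Fourier transforming the resulting PDEs to obtain an ODE that yields \eqref{import.F.p.equat} for $\wh{\pH}-\wh{\palp}$, and separately expanding $\trace$ via Duhamel's principle into a triple integral; the proposition then follows by matching the two expressions after using the Dirac mass hidden inside $\wh{p^{(\alpha)}_{t-s}}$. Your proof skips all of that: it uses only that the trace of a trace-class integral operator is the diagonal integral of its kernel, plus the classical Fourier-inversion collapse $\int K_t(x,x)\,dx=\frac{1}{(2\pi)^d}\int\wh{K_t}(\xi,-\xi)\,d\xi$ (which, as you note, is just Fourier inversion applied at the origin to $u\mapsto\int K_t(u+v,v)\,dv$). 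What your proof buys is brevity and transparency for the identity itself; what the paper's proof buys is the relation \eqref{import.F.p.equat}, which is not a detour but the engine for the subsequent corollary, since iterating it $J$ times produces the expansion \eqref{trace.3} that the rest of the paper analyzes. The one point worth making explicit in your write-up is the trace-class and Fubini justifications in Steps 1 and 3, which you flag as the "main obstacle"; the paper glosses over these as well, and the subordination bound \eqref{Fey.Kac.f} together with $V\in\calS(\Rd)$ does give the needed integrability, so this is a fair omission in both arguments.
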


\begin{proof}
For all $t>0$ and $x,y\in\Rd$, we have 
\begin{align}\label{palp.equat}
\partial_{t}\palp(x,y)&=-(-\Delta)_{x}^{\frac{\alpha}{2}}\palp(x,y) \\
 p_{0}^{(\alpha)}(x,y)&= \delta(x-y) \nonumber
 \end{align}
and
\begin{align}\label{pH.equat}
\partial_{t}\pH(x,y)&=-[(-\Delta)_{x}^{\frac{\alpha}{2}} + V(x)]\pH(x,y) \\
  p_{0}^{H_{V}}(x,y)&= \delta(x-y). \nonumber
\end{align}
 By taking Fourier transform on $\R^{2d}$, we deduce that
\begin{align}\label{Four.palp.equat}
(\partial_{t}+|\xi|^{\alpha})\wh{\palp}(\xi,\eta)&=0 \nonumber\\
  \wh{p_{0}^{(\alpha)}}(\xi,\eta)&=(2\pi)^d \delta(\xi +\eta)
\end{align}
and that 
\begin{align}\label{Four.pH.equat}
(\partial_{t}+ |\xi|^{\alpha})\wh{\pH}(\xi,\eta)&= -\frac{1}{(2\pi)^d}\ioRd \wh{V}(\theta)\wh{\pH}(\xi-\theta, \eta)d\theta \nonumber \\
\wh{p_0^{H_V}}(\xi,\eta)&=(2\pi)^d \delta(\eta+\xi).
\end{align}
Now, by directly solving (\ref{Four.palp.equat}) and (\ref{Four.pH.equat}) we find that
\[
\wh{\palp}(\xi,\eta)=(2\pi)^d\delta(\eta+\xi)e^{-t|\xi|^{\alpha}}
\]
and that 
\begin{equation}\label{import.F.p.equat}
\wh{\pH}(\xi,\eta)-\wh{\palp}(\xi,\eta)= -\frac{1}{(2\pi)^d}\int_{0}^{t}\ioRd e^{-(t-s)|\xi |^{\alpha}}\wh{V}(\theta)\wh{p_s^{H_V}}(\xi-\theta,  \eta)d\theta ds.
\end{equation}
On the other hand, from (\ref{palp.equat}), (\ref{pH.equat}) and Duhamel's Principle we see that 
\[
\pH(x,y)-\palp(x,y)= -\int_0^{t}\ioRd p_{t-s}^{(\alpha)}(x,z) p_{s}^{H_{V}}(z,y)V(z)dzds.
\]
Hence
\begin{eqnarray}\label{trace.2}
\trace&=&\ioRd\left(\pH(x,x)-\palp(x,x)\right)dx \nonumber \\
&=&-\int_0^{t}\iogRd{2}p_{t-s}^{(\alpha)}(x,z)p_{s}^{H_{V}}(z,x)V(z)dzdxds. 
\end{eqnarray}

Expressing the right hand side of (\ref{trace.2}) in terms of Fourier transform we arrive at 
\[
\trace= -\frac{1}{(2\pi)^{2d}}\int_0^{t}\iogRd{3}\wh{V}(\theta)\wh{p_{s}^{H_{V}}}(\mu,\tau)\wh{p_{t- s}^{(\alpha)}}(-\mu -\theta,-\tau)d\mu d\tau d\theta ds. 
\]
Since
$$\wh{p_{t-s}^{(\alpha)}}(-\mu-\theta,-\tau)=(2\pi)^d \delta(\tau+\theta+\mu)e^{-(t-s)|\theta+\mu|^{\alpha}},$$
 we see that 
\[
\trace= -\frac{1}{(2\pi)^{2d}} \int_0^{t}\iogRd{2}e^{-(t-s)|\tau|^{\alpha}}\wh{p_{s}^{H_{V}}}(-\tau-\theta,\tau)\wh{V}(\theta)d\tau d\theta ds.
\]

The conclusion of the proposition  follows by setting $\xi=-\tau$ in last equation,  $\eta=\xi$ in (\ref{import.F.p.equat}) and integrating with respect to $\xi$.
\end{proof}

If we now iterate the equation (\ref{import.F.p.equat}) $J$-times, we obtain 

\begin{cor}  
Let $V\in\calS(\Rd)$, $0<\alpha<2$ and set 
\[
\Fjalp{j}{s}=e^{-(t-s_1)|\xi|^{\alpha}-\mysum{k}{1}{j-1}(s_k - s_{k+1}) |\xi - \mysum{i}{1}{k}\theta_i|^{\alpha}},
\]
where $s=(s_1,...,s_j)$, $s_k <s_{k+1}$ and $\theta=(\theta_1,..., \theta_{j-1})$.  Then for $J\geq2$,
\begin{align*}
&\wh{\pH}(\xi,\eta)-\wh{\palp}(\xi,\eta)= -\frac{1}{(2\pi)^d}\int_0^t \ioRd e^{-(t-s_{1})|\xi |^{\alpha}} \wh{V}(\theta_1)\wh{p_{s_1}^{(\alpha)}}(\xi- \theta_1,\eta)d\theta_1 d s_1 +              \nonumber  \\
&\mysum{j}{2}{J}\frac{(-1)^{j}}{(2\pi)^{jd}}\int_0^t \int_0^{s_1}\cdots \int_0^{s_{j-1}}\iogRd{j} \Fjalp{j}{s}\wh{ p_{s_j}^{(\alpha)}}( \xi -\mysum{i}{1}{j}\theta_i,\eta )\pV{j} d\theta_i ds_i  + \nonumber   \\
&\frac{(-1)^{J+1}}{(2\pi)^{(J+1)d}}\int_0^t \int_0^{s_1}\cdots \int_0^{s_J}\iogRd{(J+1)}\Fjalp{J+1}{s}\wh{p_{s_{J+1}}^{H_V}}(\xi -\mysum{i}{1}{J+1}
\theta_i,\eta )\pV{J+1}d\theta_i ds_i.
\end{align*}
Furthermore, we conclude
\begin{align}\label{trace.3}
&{\trace} =-t\palp(0)\wh{V}(0) + \\ &\mysum{j}{2}{J}\frac{(-t)^{j}}{(2\pi)^{jd}}\lgint{j-1}{j}\Fjalp{j}{t\lambda}e^{-t\lambda_j|\xi|^{\alpha}}\opV d\theta_i d\lambda_i d\lambda_j d\xi  + \nonumber
 \\ &\frac{(-t)^{J+1}}{(2\pi)^{(J+2)d}}\lgint{J}{(J+2)}\Fjalp{J+1}{t\lambda}
\wh{p_{t\lambda_{J+1}}^{H_V}}(\xi -\mysum{i}{1}{ J+1}\theta_i,-\xi )\pV{J+1}d\theta_i d\lambda_{i} d\xi. \nonumber
\end{align}
\end{cor}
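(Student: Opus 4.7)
The plan is to establish the two displayed identities of the corollary in sequence. I first prove the iterated Fourier representation by induction on $J$, taking (\ref{import.F.p.equat}) as the base case. For the inductive step I split the innermost Schr\"odinger factor of the current remainder as
\[
\wh{p_{s_k}^{H_V}} = \wh{p_{s_k}^{(\alpha)}} + \bigl(\wh{p_{s_k}^{H_V}} - \wh{p_{s_k}^{(\alpha)}}\bigr),
\]
then apply (\ref{import.F.p.equat}) once more to the bracket with the spatial argument shifted by $\sum_{i=1}^{k}\theta_i$ and with $t$ replaced by $s_k$; renaming the freshly introduced integration variables $\theta_{k+1}$ and $s_{k+1}$, each iteration inserts the factor $-\frac{1}{(2\pi)^d}\wh{V}(\theta_{k+1})e^{-(s_k-s_{k+1})|\xi-\sum_{i=1}^{k}\theta_i|^\alpha}$ and one nested time integral. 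After $J$ such steps the exponential prefactors assemble into $\Fjalp{J+1}{s}$ and the accumulated sign and normalization become $(-1)^{J+1}(2\pi)^{-(J+1)d}$, which is precisely the first identity of the corollary.

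To deduce (\ref{trace.3}) I set $\eta=-\xi$, integrate in $\xi$, and prepend the factor $(2\pi)^{-d}$ dictated by Proposition \ref{Fourier-trace-formula}. For each summand with $1\le j\le J$ the innermost factor is a pure stable Fourier kernel, so (\ref{Four.palp.equat}) yields
\[
\wh{p_{s_j}^{(\alpha)}}\!\Bigl(\xi-\mysum{i}{1}{j}\theta_i,\,-\xi\Bigr)=(2\pi)^d\,\delta\!\Bigl(\mysum{i}{1}{j}\theta_i\Bigr)\,e^{-s_j\,|\xi-\sum_{i=1}^{j}\theta_i|^\alpha}.
\]
The delta collapses the $\theta_j$-integration onto the hyperplane $\theta_j=-\sum_{i=1}^{j-1}\theta_i$, simultaneously reducing the stable exponent to $e^{-s_j|\xi|^\alpha}$ and reassembling the product $\wh{V}(\theta_1)\cdots\wh{V}(\theta_j)$ into $\opV$. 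The rescaling $s_i=t\lambda_i$ rewrites the nested time integrals over $I_{j}$ and pulls out the factor $t^{j}$; together with the $(2\pi)^d$ gained from the delta this produces the announced coefficient $(-t)^{j}/(2\pi)^{jd}$. For $j=1$ a further use of $\int_{\Rd}e^{-t|\xi|^{\alpha}}d\xi=(2\pi)^d\palp(0)$ and $\wh{V}(0)=\ioRd V(\theta)\,d\theta$ collapses the term to $-t\palp(0)\wh{V}(0)$.

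The remainder ($j=J+1$) is treated identically, except that $\wh{p_{s_{J+1}}^{H_V}}$ admits no delta reduction and is simply kept in place; this is precisely why its denominator gains an extra $(2\pi)^d$ and becomes $(2\pi)^{(J+2)d}$, while the spatial integration domain becomes $\R^{(J+2)d}$ (the $J+1$ variables $\theta_i$ together with $\xi$). The main obstacle is purely combinatorial bookkeeping: tracking the signs, the powers of $(2\pi)$ accumulated at each stage, and which $\theta$-variable is eliminated by which delta function. All of the genuine analytic input is already contained in Proposition \ref{Fourier-trace-formula} and in equation (\ref{import.F.p.equat}); the corollary is a careful inductive unfolding of these two facts.
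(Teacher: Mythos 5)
Your proof is correct and follows exactly the route the paper intends: iterate equation (\ref{import.F.p.equat}) $J$ times by repeatedly splitting the innermost Schr\"odinger factor as $\wh{p_{s_k}^{H_V}}=\wh{p_{s_k}^{(\alpha)}}+(\wh{p_{s_k}^{H_V}}-\wh{p_{s_k}^{(\alpha)}})$, then substitute into Proposition \ref{Fourier-trace-formula} with $\eta=-\xi$ and let the delta in $\wh{p_{s_j}^{(\alpha)}}(\xi-\gamma_j,-\xi)=(2\pi)^d\delta(\gamma_j)e^{-s_j|\xi-\gamma_j|^{\alpha}}$ eliminate $\theta_j$, followed by the rescaling $s_i=t\lambda_i$. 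The bookkeeping of signs, powers of $(2\pi)$, the Jacobian $t^j$, and the dimensions $\R^{jd}$ vs.\ $\R^{(J+2)d}$ is all accurate; the only minor imprecision is citing (\ref{Four.palp.equat}) for the closed form of $\wh{\palp}$, which is really the solution of that equation derived in the proof of Proposition \ref{Fourier-trace-formula}.
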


\section{Boundedness  of the (J+1)--th term.}\label{sec:boundedcoeff.rem}

Our goal in this section is to provide an upper bound for the absolute value of last expression in \eqref{trace.3} in terms of $\palp(0)$. The following Lemma is a consequence of \eqref{Plancherel}, the inversion formula  and induction.   Therefore its proof is omitted.  
\begin{lem}
Let $J\geq1$ and $\set{p_i}_{i=0}^J \subset \calS(\R^d)$ radial functions. Set $\Psi_{J}(\xi)=\wh{p_{0}}(\xi)\myprod{j}{1}{J}\wh{p_j}(\gamma_j-\xi)$, where $\gamma_j \in \R^d$ are constant vectors. Then,
\begin{equation*}
\wh{\Psi_{J}}(\gamma)= (2\pi)^d\iogRd{J}e^{-i\mysum{j}{1}{J}\langle\gamma_j,x_j\rangle}p_0(\mysum{j}{1}{J}x_j-\gamma)
\myprod{j}{1}{J}p_{j}(x_j)dx_j.
\end{equation*}
\end{lem}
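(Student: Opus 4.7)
The plan is to prove the identity by induction on $J$, using the convolution identity \eqref{Plancherel} together with the Fourier inversion formula to handle each new factor. Radiality of the $p_j$'s never enters the argument in an essential way; only the hypothesis $p_j\in\calS(\R^d)$ is needed, which guarantees that all Fubini and Plancherel manipulations are justified by absolute convergence.

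For the base case $J=1$, write $\Psi_1=A\cdot B$ with $A(\xi)=\wh{p_0}(\xi)$ and $B(\xi)=\wh{p_1}(\gamma_1-\xi)$. A direct application of \eqref{Plancherel} gives
$$\wh{\Psi_1}(\gamma)=\frac{1}{(2\pi)^d}\ioRd \wh{A}(\theta)\,\wh{B}(\gamma-\theta)\,d\theta.$$
Next I would compute each factor on the right directly: Fourier inversion gives $\wh{A}(\theta)=\wh{\wh{p_0}}(\theta)=(2\pi)^d p_0(-\theta)$, and the change of variable $\eta=\gamma_1-\xi$ in the integral defining $\wh{B}$ yields $\wh{B}(\gamma)=(2\pi)^d e^{-i\langle\gamma_1,\gamma\rangle}p_1(\gamma)$. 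Substituting these into the convolution integral and then making the affine change of variable $x_1=\gamma-\theta$ produces the desired formula
$$\wh{\Psi_1}(\gamma)=(2\pi)^d\ioRd e^{-i\langle\gamma_1,x_1\rangle}\,p_0(x_1-\gamma)\,p_1(x_1)\,dx_1.$$

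For the inductive step I would decompose $\Psi_{J+1}(\xi)=\Psi_J(\xi)\cdot\wh{p_{J+1}}(\gamma_{J+1}-\xi)$, apply \eqref{Plancherel} once more, and substitute the inductive hypothesis for $\wh{\Psi_J}(\theta)$ together with the base-case computation applied to the last factor $\wh{p_{J+1}}(\gamma_{J+1}-\cdot)$. The change of variable $x_{J+1}=\gamma-\theta$ then fuses the two $p_0$-arguments via the elementary identity $\mysum{j}{1}{J}x_j-\theta+x_{J+1}=\mysum{j}{1}{J+1}x_j-\gamma$, and the new phase $e^{-i\langle\gamma_{J+1},x_{J+1}\rangle}$ merges cleanly into the existing product of exponentials, promoting the $J$-step formula to the $(J+1)$-step formula.

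The only nontrivial aspect of the argument is bookkeeping: keeping careful track of the shift vectors $\gamma_1,\ldots,\gamma_{J+1}$ and $\gamma$ through successive convolutions and changes of variables, and ensuring that the phase factors assemble into the single product $e^{-i\mysum{j}{1}{J}\langle\gamma_j,x_j\rangle}$ in the final answer. There is no analytical subtlety, since the Schwartz hypothesis makes every iterated integral absolutely convergent and every interchange of order of integration legal; the proof is therefore a purely combinatorial manipulation of Fourier-type identities.
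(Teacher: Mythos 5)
Your proof is correct and follows exactly the route the paper indicates (the paper omits the proof, noting only that it ``is a consequence of \eqref{Plancherel}, the inversion formula and induction''): the base case, the inductive factorization $\Psi_{J+1}=\Psi_J\cdot\wh{p_{J+1}}(\gamma_{J+1}-\cdot)$, and the change of variable $x_{J+1}=\gamma-\theta$ all check out. One cosmetic slip: the displayed identity in your inductive step should read $\sum_{j=1}^{J}x_j-\theta=\sum_{j=1}^{J+1}x_j-\gamma$ (after substituting $\theta=\gamma-x_{J+1}$), not $\sum_{j=1}^{J}x_j-\theta+x_{J+1}=\sum_{j=1}^{J+1}x_j-\gamma$; your computation is otherwise sound, and your observation that radiality of the $p_j$ plays no role here is accurate.
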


\begin{rmk} If we set $p_{0}=p_{t(1-\lambda_1)}^{(\alpha)}$ and $p_j= p_{t(\incmt{j}{j+1})}^{(\alpha)}$ in the last Lemma, with $\gamma_j =\mysum{k}{1}{j}\theta_k$, we obtain
\begin{equation}\label{FourierF}
\frac{\wh{ F_{J+1}^{(\alpha)}}(t\lambda,x-y,\theta)}{(2\pi)^d}=\iogRd{J}e^{-i\mysum{j}{1}{J}\langle\gamma_j,x_j\rangle}
p_{t(1-\lambda_1)}^{(\alpha)}(\mysum{j}{1}{J}x_j-(x-y))
\myprod{j}{1}{J}p_{t(\lambda_j-\lambda_{j+1})}^{(\alpha)}(x_j)dx_j.
\end{equation}
\end{rmk}
Next, it is known that the transition density $\palp(x,y)$ satisfies the Chapman-Kolmogorov equation, namely,
\begin{equation}\label{Chapman}
\ioRd p_s^{(\alpha)}(a-z)p_{t}^{(\alpha)}(z)dz=p_{t+s}^{(\alpha)}(a),
\end{equation} 
for all $a\in \R^d$ and $t,s>0$. With this equality at hand,  it easily follows that
\begin{equation}\label{iterated1}
\iogRd{J}p_{t(1-\lambda_1)}^{(\alpha)}(\mysum{j}{1}{J}x_j-(x-y))
\myprod{j}{1}{J}p_{t(\lambda_j-\lambda_{j+1})}^{(\alpha)}(x_j)dx_j= p_{t(1-\lambda_{J+1})}^{(\alpha)}(x-y).
\end{equation}

It can also be proved by means of the  inversion formula that
\begin{equation}\label{iterated2}
\iogRd{(J+1)}e^{-i\set{ \langle x,\gamma_{J+1}\rangle+\mysum{j}{1}{J}\langle\gamma_j,x_j\rangle} }
\myprod{i}{1}{J+1}\wh{V}(\theta_i)d\theta_i=(2\pi)^{(J+1)d}V(x)\myprod{k}{1}{J}V(-\mysum{j}{k}{J}x_j+x).
\end{equation}

\begin{prop}\label{last.remainder}
Assume $0<t<1$ and define
\begin{align*}
r_{J+1}(t)=\lgint{J}{(J+2)}&\Fjalp{J+1}{t\lambda}\ \wh{ p_{t\lambda_{J+1}}^{H_V}}(\xi-\mysum{i}{1}{J+1}\theta_i,-\xi)\pV{J+1}d\xi d\theta_i d\lambda_i.
\end{align*}
There exists a positive constant $C=C_{J+1,d,\alpha}(V)$ such that
\begin{equation*}
\abs{r_{J+1}(t)}\leq C\palp(0). 
\end{equation*}

\begin{proof} Set $\gamma_j= \mysum{i}{1}{j}\theta_i$ and 
\begin{equation*}
p(x-y,\set{x_j}_{j=1}^{J})=p_{t(1-\lambda_1)}^{(\alpha)}(\mysum{j}{1}{J}x_j-(x-y))
\myprod{j}{1}{J}p_{t(\lambda_j-\lambda_{j+1})}^{(\alpha)}(x_j).
\end{equation*}
 Then,
by definition of Fourier transform and \eqref{FourierF}, we have that
\begin{align*}
I_{J+1}(t\lambda,\theta)=&\ioRd \Fjalp{J+1}{t\lambda} \wh{ p_{t\lambda_{J+1}}^{H_V}}(\xi-\gamma_{J+1},-\xi)d\xi \nonumber \\
=&\int_{\R^{d}}\Fjalp{J+1}{t\lambda} \int_{\R^{2d}}e^{-i\set{ \langle \xi,x-y \rangle +\langle x,-\gamma_{J+1}\rangle}} p_{t\lambda_{J+1}}^{H_V}(x,y)dxdyd\xi  \nonumber \\
=&\int_{\R^{2d}} p_{t\lambda_{J+1}}^{H_V}(x,y)e^{-i\set{ \langle x,-\gamma_{J+1} \rangle }}
\wh{F_{J+1}^{(\alpha)}}(t\lambda,x-y,\theta)dxdy  \nonumber \\
=&(2\pi)^{d}\iogRd{J}\iogRd{2}e^{-i\set{ \langle x,-\gamma_{J+1} \rangle + \mysum{j}{1}{J}\langle x_j,\gamma_{j}\rangle}} 
 p_{t\lambda_{J+1}}^{H_V}(x,y)p(x-y,\set{x_j}_{j=1}^{J})dxdydx_j.
\end{align*}
Now, because of \eqref{iterated2}, \eqref{iterated1} and  the fact that
$p_{ t\lambda_{J+1} }^{H_V}(x,y)\leq e^{t||V||_{L^{\infty}(\R^d)}}p_{t\lambda_{J+1}}^{(\alpha)}(x,y)$ (which follows from \eqref{Fey.Kac.f}), we obtain from the last equality that 
\begin{align*}
&\abs{\iogRd{(J+1)}I_{J+1}(t\lambda,\theta)\pV{J+1}d\theta_i}= \nonumber \\
&\abs{(2\pi)^{(J+2)d}\iogRd{(J+2)}V(-x)\myprod{i}{1}{J}V(-\mysum{j}{1}{J}x_j-x)
 p_{t\lambda_{J+1}}^{H_V}(x,y)p(x-y,\set{x_j}_{j=1}^{J})dxdydx_j } \leq \nonumber \\
&(2\pi)^{(J+2)d}||V||_{L^{\infty}(\R^d)}^{J}e^{t||V||_{L^{\infty}(\R^d)}}\int_{\R^d}
\abs{V(-x)}\int_{\R^d}p_{t(1-\lambda_{J+1})}^
{(\alpha)}(x-y)p_{t\lambda_{J+1}}^{(\alpha)}(x-y)dydx.
 \end{align*}
 It follows from \eqref{Chapman} that
$$\abs{r_{J+1}(t)}\leq 
\frac{(2\pi)^{(J+2)d}\palp(0)}{(J+1)!}||V||_{L^{\infty}(\R^d)}^{J}e^{t||V||_{L^{\infty}(\R^d)}} ||V||_{L^{1}(\R^d)}.$$
\end{proof}
\end{prop}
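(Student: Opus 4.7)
The strategy is to return to position space and exploit the semigroup structure of the fractional heat kernels. The key observation is that $\Fjalp{J+1}{t\lambda}$ is a product of factors of the form $e^{-t(\lambda_k-\lambda_{k+1})|\xi-\gamma_k|^{\alpha}}$ with $\gamma_k = \mysum{i}{1}{k}\theta_i$; each such factor is the spatial Fourier transform of a translated stable transition density, so $\Fjalp{J+1}{t\lambda}$ itself is the Fourier transform of an iterated convolution of shifted $\palp$-kernels, as recorded in \eqref{FourierF}.

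First, I would unfold $\wh{p_{t\lambda_{J+1}}^{H_V}}(\xi-\gamma_{J+1},-\xi)$ using the definition of the Fourier transform on $\R^{2d}$ and swap the $\xi$-integral so that it acts only on $\Fjalp{J+1}{t\lambda}\,e^{-i\langle \xi, x-y\rangle}$, producing $\wh{F_{J+1}^{(\alpha)}}(t\lambda,x-y,\theta)$ by \eqref{FourierF}. Next, I would integrate in $\theta$ against $\pV{J+1}$; identity \eqref{iterated2} converts the resulting oscillating exponentials into a product of $V$'s evaluated at affine combinations of $x$ and the auxiliary variables $x_j$. After this step the integrand factors into (i) the Schr\"odinger kernel $p_{t\lambda_{J+1}}^{H_V}(x,y)$, (ii) an iterated convolution of shifted $\palp$-kernels in the $x_j$'s, and (iii) $J+1$ factors of $V$ at affine points.

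To close the argument I would apply three standard bounds. The Feynman--Kac formula \eqref{Fey.Kac.f} gives $p_s^{H_V}(x,y) \leq e^{s\|V\|_{L^\infty(\R^d)}}\,p_s^{(\alpha)}(x-y)$ for (i); $J$ of the $V$-factors in (iii) are estimated by $\|V\|_{L^\infty(\R^d)}$, with the remaining factor reserved to yield $\|V\|_{L^1(\R^d)}$ after integrating in $x$; and Chapman--Kolmogorov \eqref{Chapman}, in the form \eqref{iterated1}, collapses (ii) into $p_{t(1-\lambda_{J+1})}^{(\alpha)}(x-y)$. A second application of \eqref{Chapman} to the convolution of $p_{t(1-\lambda_{J+1})}^{(\alpha)}$ and $p_{t\lambda_{J+1}}^{(\alpha)}$ in $y$ then produces exactly $\palp(0)$, while the $\lambda$-simplex contributes $1/(J+1)!$. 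The main obstacle I anticipate is purely bookkeeping: verifying that the shifts $\gamma_j$ and the signs in the various exponentials align so that \eqref{iterated2} reproduces precisely $\pV{J+1}$ (rather than a twisted version) and that the convolution structure coming out of \eqref{FourierF} matches the prescribed times $t(1-\lambda_1),t(\lambda_1-\lambda_2),\ldots$ in $\Fjalp{J+1}{t\lambda}$. Conceptually the argument is routine, but the affine shifts make this calculation prone to sign errors.
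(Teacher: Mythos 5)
Your proposal follows the paper's proof essentially verbatim: unfold the $\xi$-integral via $\wh{F_{J+1}^{(\alpha)}}$ using \eqref{FourierF}, convert the $\theta$-integral into products of $V$ via \eqref{iterated2}, bound $p^{H_V}$ by the free kernel via Feynman--Kac, collapse the iterated convolution by \eqref{iterated1} and a second application of \eqref{Chapman}, and collect $\|V\|_{L^\infty}^J \|V\|_{L^1}$ together with the simplex volume $1/(J+1)!$. The decomposition, the lemmas invoked, and the order of the estimates all coincide with the paper, so this is the same argument.
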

As a consequence, we have that 
\begin{equation*}
R_{J+1}(t)=\frac{r_{J+1}(t)}{(2\pi)^{(J+2)d}\palp(0)}
\end{equation*}
is bounded for $0<t<1$.

\section{Heat trace computation by means of subordinators.}\label{sec:trace-sub}
 
In this section we further investigate formula (\ref{trace.3}) involving $\trace$. We start by integrating the function  
\begin{equation}\label{Fj.intergrand}
\Fjalp{j}{t\lambda}e^{ -t\lambda_j |\xi|^{\alpha} }= e^{ -t(1- [\lambda_1-\lambda_j] ) |\xi |^{\alpha}-t\mysum{k}{1}{j-1}( \lambda_k - \lambda_{k+1})|\xi - \mysum{i}{1}{k}\theta_i |^{\alpha}}
\end{equation}
with respect to $\xi$ over $\Rd$.  
The integral could be easily computed when $\alpha=2$ using two elementary facts.  Namely,  for any $\gamma\in \Rd$, 
\begin{equation}\label{basic.facts1}
|\xi+\gamma|^{2}=|\xi|^2+2<\xi,\gamma> +|\gamma|^2 
\end{equation}
and 
\begin{equation}
\ioRd e^{-t|\xi-\gamma|^2}d\xi=\pi^{d/2}t^{-d/2}.
\end{equation}
Unfortunately, we cannot calculate (\ref{Fj.intergrand}) in the same way because there is not a close form for $|\xi+\gamma|^{\alpha}$, when $0<\alpha<2$. Instead, we will follow a probabilistic approach by means of $\alpha/2$--subordinators and their Laplace transform given in \S\ref{Lap.Subor} that relates $|\cdot |^{\alpha}$ to {$|\cdot|^{2}$ to find the value of the integral involving the quantity in \eqref{Fj.intergrand}.
  We begin by observing  that (\ref{CharF-LapS}) implies that for all $c>0$ and $t>0$, 
\begin{equation}\label{Simility.Sub}
e^{-tc |\xi|^{\alpha}}=\texpt{ e^{- t^{\frac{2}{\alpha}}S_{c}|\xi|^2}}.
\end{equation} 
In addition,  for any sequence of numbers $\set{\lambda_k}_{k=1}^{j}$, $j\geq2$, satisfying
\begin{equation}\label{lamb.ineq}
0<\lambda_j<\lambda_{j-1}<...<\lambda_2<\lambda_1 <1, 
\end{equation}
we have
\[S_{1}= S_{ ( 1-\set{ \incmt{1}{j} } )+\incmt{1}{j} }- S_{ \incmt{1}{j} } + \mysum{k}{1}{j-1}\left(S_{\incmt{k}{k+1} +(\incmt{k+1}{j})}-S_{\incmt{k+1}{j}}\right).\]
For  $1 \leq k \leq j-1$ consider the random variables
\begin{equation*}
\spS{ \incmt{k}{k+1}}= S_{ \incmt{k}{k+1} +(\incmt{k+1}{j})}- S_{\incmt{k+1}{j}} 
\end{equation*}
and 
\begin{equation*}
\spS{ 1-(\incmt{1}{j}) }= S_{ 1-\set{\incmt{1}{j}} +\incmt{1}{j}}-S_{\incmt{1}{j}}.
\end{equation*}
Since the process $S$ has independent and stationary increments, we see that 
the random variables $\set{\spS{\incmt{k}{k+1}}, \spS{1- ( \incmt{1}{j})}}_{k=1}^{j-1}$ are independent and furthermore, 

\begin{align} \label{Spec.Sub.eid.Sub}
\spS{ \incmt{k}{k+1} }&\eid S_{\incmt{k}{k+1}}  \nonumber \\
\spS{ 1- (\incmt{1}{j})}&\eid S_{1-(\incmt{1}{j})}.
\end{align}
 We also have, of course, that 
\begin{equation}\label{gen,expr.S1}
\spS {1-(\incmt{1}{j})} + \mysum{k}{1}{j-1}\spS{\incmt{k}{k+1}}=S_{1}. 
\end{equation}

As before let us denote, for simplicity, $\gamma_k=\mysum{i}{1}{k}\theta_i$. It follows from (\ref{Simility.Sub}), (\ref{Spec.Sub.eid.Sub}), (\ref{gen,expr.S1}) and the independence of $\set{\spS{\incmt{k}{k+1}}, \spS{1- ( \incmt{1}{j})}}_{k=1}^{j-1}$ that 

\begin{align}\label{trick.expt}
&e^{ -t( 1- [\incmt{1}{j}] )|\xi |^{\alpha} } \myprod{k}{1}{j-1}e^{ -t (\incmt{k}{k+1}) |\xi - \gamma_k |^{\alpha} }= \nonumber \\
&\texpt{ \exp\left( -t^{2/\alpha}\spS{ 1 -( \incmt{1}{j} ) }|\xi|^2 \right)}\texpt{ \exp\left( -t^{2/\alpha} \mysum{k}{1}{j-1}\spS{ \incmt{k}{k+1} }|\xi - \gamma_k|^2 \right) }= \nonumber \\
&\texpt{ \exp\left(-t^{2/\alpha} \set{  \spS{1 -(\incmt{1}{j}) } |\xi|^2 + \mysum{k}{1}{j-1}\spS{ \incmt{k}{k+1} }|\xi - \gamma_k|^2 } \right)}.
\end{align}

Next, consider the random variable
\begin{equation}\label{Lj def}
\Lj= \mysum{k}{1}{j-1}\spS{\incmt{k}{k+1}}|\gamma_k|^2 -\frac{1}{S_{1}}\left|\mysum{k}{1}{j-1}\spS{\incmt{k}{k+1}}\gamma_k\right|^2, 
\end{equation}
where $\lambda= (\lambda_1,...,\lambda_j)$ satisfies (\ref{lamb.ineq}).  By (\ref{basic.facts1}), (\ref{gen,expr.S1}}) and completing squares we easily get that 
  
\begin{align*}
\spS{1 -( \incmt{1}{j}) } |\xi |^2 +\mysum{k}{1}{j-1}\spS{\incmt{k}{k+1}}|\xi - \gamma_k|^2 = S_{1} \abs{\xi - \frac{1}{S_{1}}\mysum{k}{1}{j-1}\spS{\incmt{k}{k+1}}\gamma_k}^2 + \Lj .
\end{align*}
Also,  observe that by (\ref{basic.facts1}) and the scaling property (\ref{Scal.prop}) we have 
\begin{eqnarray} \label{prev.int.Fj.val}
\ioRd \exp\left( -t^{2/\alpha}S_{1}\abs{ \xi - \frac{1}{S_{1}}\mysum{k}{1}{j-1}\spS{\incmt{k}{k+1}}\gamma_k }^2\right)d\xi&=&\pi^{d/2}t^{-d/\alpha}S_{1}^{-d/2} \nonumber \\
&=&\const\palp(0)S_{1}^{-d/2}, 
\end{eqnarray}
over the the set where $0<S_{1}<\infty$.
Here,  
\begin{equation}\label{imp.const}
\const= \frac{\pi^{d/2}}{ p_{1}^{(\alpha)}(0)}. 
\end{equation} 
 
We now combine these calculations to find the value of the desired  integral.  More precisely, we have

\begin{lem}
Let $ \lambda= (\lambda_1,...,\lambda_j)$ satisfy (\ref{lamb.ineq}), $\gamma_k=\mysum{i}{1}{k}\theta_i$ and $\theta= (\theta_1,...,\theta_{j-1})$.  Then, 
\[ \Lj\geq0, \,\,\,\, \textrm{a.s.}\]
and 
 \begin{align}\label{Fj.int,val}
\ioRd \Fjalp{j}{t\lambda}e^{-t\lambda_j|\xi|^{\alpha}}d\xi &=\const\palp(0)\texpt{S_{1,\frac{\alpha}{2}}^{-d/2}e^{-t^{2/\alpha}\Lj}}.
\end{align}
\end{lem}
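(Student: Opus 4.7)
The lemma is essentially a bookkeeping step that packages the identities derived in the paragraphs immediately preceding the statement, so the plan is assembly rather than new computation.

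\textbf{Non-negativity of $\Lj$.} I would read off the non-negativity directly from the completing-squares identity established just above the lemma, namely
\[
\spS{1-(\incmt{1}{j})}|\xi|^2 + \mysum{k}{1}{j-1}\spS{\incmt{k}{k+1}}|\xi-\gamma_k|^2 = S_{1}\left|\xi - \mu\right|^2 + \Lj,
\]
where $\mu = S_{1}^{-1}\mysum{k}{1}{j-1}\spS{\incmt{k}{k+1}}\gamma_k$. Since $S_1>0$ a.s.\ (as $\alpha\in(0,2)$), evaluating both sides at $\xi=\mu$ collapses the squared term on the right, so $\Lj$ equals the left-hand side at $\xi=\mu$, which is a sum of non-negative terms. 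Hence $\Lj\geq 0$ almost surely. (Equivalently, this is just the Cauchy--Schwarz inequality $|\sum_k a_k\gamma_k|^2 \leq (\sum_k a_k)(\sum_k a_k|\gamma_k|^2)$ applied coordinate-wise with $a_k=\spS{\incmt{k}{k+1}}$.)

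\textbf{Integral identity.} Starting from \eqref{trick.expt}, the integrand $\Fjalp{j}{t\lambda}e^{-t\lambda_j|\xi|^{\alpha}}$ is rewritten as the expectation of
\[
\exp\!\left(-t^{2/\alpha}\left\{\spS{1-(\incmt{1}{j})}|\xi|^2 + \mysum{k}{1}{j-1}\spS{\incmt{k}{k+1}}|\xi-\gamma_k|^2\right\}\right).
\]
Since this is non-negative, Tonelli's theorem lets me interchange $\ioRd d\xi$ with $E[\cdot]$. Inside the expectation, the completing-squares identity from the previous step factors the exponential as
\[
e^{-t^{2/\alpha}\Lj}\cdot e^{-t^{2/\alpha}S_{1}|\xi-\mu|^2},
\]
so that the inner $\xi$-integral is precisely the Gaussian computed in \eqref{prev.int.Fj.val}, giving $\const\,\palp(0)\,S_{1}^{-d/2}$ on the full-measure event $\{S_1>0\}$. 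Pulling this factor outside yields
\[
\ioRd \Fjalp{j}{t\lambda}e^{-t\lambda_j|\xi|^{\alpha}}\,d\xi = \const\,\palp(0)\,\texpt{S_{1,\frac{\alpha}{2}}^{-d/2}e^{-t^{2/\alpha}\Lj}},
\]
which is the claimed formula.

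\textbf{Main obstacle.} There is essentially none beyond the Tonelli swap, which is immediate from the non-negativity of all the terms in play. The whole statement is really just the moment one needs to record the outcome of the subordinator-based reformulation of the $\xi$-integral, so that subsequent sections can work directly with the random variable $\Lj$ and the factor $S_{1,\alpha/2}^{-d/2}$ rather than with $\alpha$-power exponentials.
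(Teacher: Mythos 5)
Your proposal is correct and follows essentially the same route as the paper: both obtain the integral identity by integrating \eqref{trick.expt} over $\xi$, interchanging integral and expectation (the paper cites Fubini, you Tonelli — same thing here since everything is non-negative), and invoking the Gaussian computation \eqref{prev.int.Fj.val}. For non-negativity of $\Lj$, the paper expands $\gamma_k$ into components and applies Cauchy--Schwarz directly (and also records the explicit non-negative decomposition \eqref{exp.form.Lj}); your evaluation of the completing-squares identity at $\xi=\mu$ is a tidy equivalent observation, and you yourself note it reduces to the same Cauchy--Schwarz inequality.
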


\begin{proof}
Assume $\gamma_k=(b_{1,k},...,b_{d,k})$.  By Cauchy-Schwarz inequality and 
\eqref {gen,expr.S1}

\begin{eqnarray}
\abs{ \mysum{k}{1}{j-1}\spS{\incmt{k}{k+1}}\gamma_k}^2 &=& \mysum{m}{1}{d}\left\{ \mysum{k}{1}{j-1}\spS{\incmt{k}{k+1} }b_{m,k} \right\}^2=\mysum{m}{1}{d}\left\{ \mysum{k}{1}{j-1}\set{\spS{\incmt{k}{k+1} }}^{\frac{1}{2}}b_{m,k}\set{\spS{\incmt{k}{k+1} }}^{\frac{1}{2}} \right\}^2 \nonumber \\
&\leq & \mysum{m}{1}{d}\left\{ \mysum{k}{1}{j-1}\spS{\incmt{k}{k+1}} b_{m,k}^2 \right\}\mysum{k}{1}{j-1}\spS {\incmt{k}{k+1}} 
\leq S_{1}\mysum{k}{1}{j-1}\spS{\incmt{k}{k+1}}|\gamma_k|^2.  \nonumber
\end{eqnarray}

In fact, under the convention that 
$\mysum{r}{1}{0}=0$, we have for  $j\geq2$, 
\begin{equation}\label{exp.form.Lj}
\Lj=S_{1}^{-1}\left[ \spS{1 - (\incmt{1}{j})} \mysum{k}{1}{j-1}\spS{\incmt{k}{k+1},}|\gamma_k|^2 +
\mysum{r}{1}{j-2}\mysum{s}{r+1}{j-1}\spS{\incmt{r}{r+1}}\spS{\incmt{s}{s+1}}|\gamma_r-\gamma_s|^2\right].
\end{equation}
On the other hand, (\ref{Fj.int,val}) follows by integrating (\ref{trick.expt}) with respect to $\xi$, applying  Fubini's Theorem to (\ref{trick.expt}) and using (\ref{prev.int.Fj.val}).
\end{proof}

\begin{rmk}
We note that from (\ref{exptofS_1}) and the fact that $0<S_1<\infty$, a.s, 
\begin{equation}\label{expt,S1,Lj}
0<\texpt{ S_{1,\alpha/2}^{-d/2}e^{-t^{2/\alpha}\Lj }} \leq \texpt{S_{1,\alpha/2}^{-d/2}}<\infty.
\end{equation}
In fact, all the above results are true for $\alpha=2$ in which case $S_{1,1}=1$ and all our calculation considerably simplify. 
\end{rmk}

\section{Bounds for remainders and coefficients.} \label{sec:coeffic.sect}

We observe that the exponential function is involved in (\ref{Fj.int,val}) and that this term is part of the expression for $\trace$ in (\ref{trace.3}). Our next step is to use a Taylor expansion of the exponential function with a particular remainder to obtain a finer estimate for the trace. This implies, as the reader may note, that in (\ref{Fj.int,val}) we will have to deal with expectations. Hence our goal in this section is to give conditions to guarantee the finiteness of these expectations. Once this is done, it will follow easily that the coefficients and remainders to appear in (\ref{trace.3}) are also finite and bounded, respectively.

We recall the well known expansion for the exponential function 
\begin{equation}\label{Tay.exp}
e^{-x}= \mysum{n}{0}{m-1}\frac{(-1)^n}{n!}x^{n} + \frac{(-1)^m}{m!}x^{m}e^{-x \beta_{m}(x)}
\end{equation}
valid for every $x\geq0$ and $m\geq1$, where we call $\beta_{m}(x)\in (0,1)$ the remainder of order m. With this expansion at hand, we now introduce two functions from which  we will obtain the desired finer estimates in the trace formula  (\ref{trace.3}). For $j\geq2$,
  
\begin{align}\label{gen.term}
&T_{d}(j,t)=\lgint{j-1}{j} \Fjalp{j}{t\lambda}e^{-t\lambda_j|\xi|^{\alpha}} \opV d\theta_i d\lambda_i d\xi d\lambda_j= \\ &\const\palp(0)\lgint{j-1}{(j-1)}\texpt{ S_{1,\alpha/2}^{-d/2} e^{-t^{\frac{2}{\alpha}} \Lj}}
 \opV d\theta_i d \lambda_i d\lambda_j. \nonumber 
\end{align}
The remainder function is 

\begin{align}\label{remainder.term}
R_{j,d}^{(m)}(t)=\frac{\const}{m!(2\pi)^{jd}}\lgint{j-1}{(j-1)}&\texpt{ S_{1,\alpha/2}^{-d/2}\set{\Lj}^{m} e^{- \beta_{m,j}^{*}(t) }} \nonumber \\ \times&\opV d\theta_i d \lambda_i d\lambda_j,
\end{align}
where the random functions $ \beta_{m,j}^{*}(t)=t^{2/\alpha}\Lj \beta_{m} (-t^{ 2/\alpha }\Lj)$ are nonnegative. 

\begin{rmk}
We note by (\ref{expt,S1,Lj}) and (\ref{gen.term}) that  

\begin{equation*} \label{finite.T}
\abs{ \frac{ T_{d}(j,t) } { (2\pi)^{jd} \palp(0) }} \leq \frac{ \const \texpt{S_{1,\alpha/2}^{-d/2}} }{ j!(2\pi)^{jd} }
\iogRd{(j-1)}\abs{\opV}d\theta_i,
\end{equation*}  
for all $j\geq2$, $d\geq 1$. Observe that the left hand side is finite since   $V\in\calS(\Rd)$, proving at the same time the finiteness of $T_{d}(j,t)$, for all $t>0$.
\end{rmk}

We now proceed to prove the finiteness of the remainder-functions in (\ref{remainder.term}) and define the coefficients given in Theorem \ref{mainthm}.

\begin{lem}\label{lem.fin.rem}
Assume $M\geq1$ is an integer satisfying $M < \frac{\alpha+d}{2}$. Then, for all $t\geq0$ and $j\geq2$,

\begin{equation*}\label{finite.remainder}
\abs{ R_{j,d}^{(M)}(t) } \leq C_{j,d,M}\iogRd{(j-1)}\left(\mysum{k}{1}{j-1}|\gamma_{k}|^2\right)^{M}
\abs {\opV} d\theta_i,
\end{equation*}
where $$C_{j,d,M}=\frac{\const\texpt{ S_{1,\alpha/2}^{M-d/2} }}{j!M!(2\pi)^{jd}},\,\,\,\, \gamma_k=\sum\limits_{i=1}^{k}\theta_i.$$ Furthermore, 
\begin{enumerate}
\item[(a)] for all integers $n\leq M$, $$0 \leq \texpt{ S_{1,\frac{\alpha}{2}}^{-d/2}\left\{ \Lj \right\}^n}< \infty,$$
\item[(b)] and for all $j\geq2$,
\begin{eqnarray*}\label{expansion.T}
\frac{T_{d}(j,t)}{ (2\pi)^{jd} \palp(0)} =\mysum{n}{0}{M-1}(-1)^{n}C_{n,j}^{(\alpha)}(V)t^{\frac{2n}{\alpha}} + (-1)^Mt^{\frac{2M}{\alpha}}R_{j,d}^{(M)}(t),
\end{eqnarray*}
where 
\begin{align*}\label{definition.C(V)}
C_{n,j}^{(\alpha)}(V)=\frac{\const}{ (2\pi)^{jd} n!}&\lgint{j-1}{(j-1)} \texpt{ S_{1,\alpha/2}^{-d/2}\left\{\Lj\right\}^{n}} 
\opV d\theta_i d\lambda_i d\lambda_j.
\end{align*}
\end{enumerate}
\end{lem}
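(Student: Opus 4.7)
The whole argument rests on a single pointwise upper bound for the random variable $L_{j}^{(\alpha)}(\lambda,\theta)$ together with the moment formula \eqref{gen.expt.S1}. First I would note that \eqref{Lj def} exhibits $L_{j}^{(\alpha)}$ as a nonnegative quantity minus a nonnegative quantity, so dropping the subtracted term gives
\[
0 \le L_{j}^{(\alpha)}(\lambda,\theta) \le \sum_{k=1}^{j-1}\spS{\incmt{k}{k+1}}|\gamma_k|^2 \le S_{1,\frac{\alpha}{2}}\sum_{k=1}^{j-1}|\gamma_k|^2,
\]
where the last step uses \eqref{gen,expr.S1} together with the nonnegativity of every $\spS{\cdot}$. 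Raising to the $n$-th power and taking expectation then yields
\[
0 \le E\!\left[S_{1,\frac{\alpha}{2}}^{-d/2}\set{L_{j}^{(\alpha)}}^{n}\right] \le E\!\left[S_{1,\frac{\alpha}{2}}^{\,n-d/2}\right] \Bigl(\sum_{k=1}^{j-1}|\gamma_k|^2\Bigr)^{n}.
\]
For $n \le M$ the hypothesis $M<\frac{d+\alpha}{2}$ gives $n-\tfrac{d}{2}<\tfrac{\alpha}{2}$, hence by \eqref{gen.expt.S1} the expectation on the right is finite. Combined with $L_j^{(\alpha)}\ge 0$ a.s., this is exactly part~(a).

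For the remainder inequality I would use $0<e^{-\beta_{M,j}^{*}(t)}\le 1$ inside \eqref{remainder.term}, exchange expectation and Lebesgue integration by Fubini, and apply the previous display with $n=M$. Using that $V\in \calS(\Rd)$ makes the $\theta$-integral of $(\sum|\gamma_k|^2)^{M}|\opV|$ converge, and that the simplex $I_j$ has Lebesgue measure $1/j!$, the estimate
\[
|R_{j,d}^{(M)}(t)| \le \frac{\const\, E[S_{1,\frac{\alpha}{2}}^{M-d/2}]}{j!\,M!\,(2\pi)^{jd}} \int_{\R^{(j-1)d}} \Bigl(\sum_{k=1}^{j-1}|\gamma_k|^2\Bigr)^{M} |\opV|\, d\theta_i
\]
follows, with $t$-independent right-hand side.

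For part (b) I would substitute the finite Taylor identity \eqref{Tay.exp}, with $x=t^{2/\alpha}L_{j}^{(\alpha)}$ and $m=M$, directly into the integrand on the second line of \eqref{gen.term}. No infinite series is involved, so the only exchange needed is between the expectation and the $(\theta,\lambda)$ integration; this is legitimate termwise by Fubini together with the integrability estimate above (for $n=0,\ldots,M-1$ directly, and for $n=M$ combined with $e^{-\beta_{M,j}^{*}(t)}\le 1$). Identifying the resulting polynomial coefficients with $C_{n,j}^{(\alpha)}(V)$ and the remainder with $R_{j,d}^{(M)}(t)$ reproduces the claimed expansion.

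The only piece of real content is the moment integrability of $S_{1,\alpha/2}^{M-d/2}$, which \eqref{gen.expt.S1} confirms precisely when $M<\frac{d+\alpha}{2}$; this is why the hypothesis on $M$ enters, and everything else is bookkeeping of constants.
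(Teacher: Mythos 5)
Your proposal follows the same route as the paper: bound $L_j^{(\alpha)} \le S_{1,\alpha/2}\sum_k|\gamma_k|^2$ by discarding the subtracted term in \eqref{Lj def} and using \eqref{gen,expr.S1}, invoke the moment formula \eqref{gen.expt.S1} under $M<\frac{d+\alpha}{2}$, bound $e^{-\beta_{M,j}^*(t)}$ by $1$, and apply the finite Taylor identity \eqref{Tay.exp} to split $T_d(j,t)$. You spell out the $1/j!$ simplex volume and the Fubini exchanges a bit more explicitly than the paper does, but the argument is essentially identical.
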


\begin{proof}
We start by observing that the condition $M < \frac{d+\alpha}{2}$ guarantees that $0\leq \texpt{S_{1,\frac{\alpha}{2}}^{n-d/2}}<\infty$ for all integers $n\leq M$, according to (\ref{gen.expt.S1}).  From this we proceed to prove (a) as follows. 
Recall that
\[
\spS{1-(\incmt{1}{j})} + \mysum{k}{1}{j-1}\spS{\incmt{k}{k+1}}=S_{1,\alpha/2}.
\]

  It follows  from (\ref{Lj def}) and last equality that 
\begin{equation*}
0<\Lj\leq\mysum{k}{1}{j-1}\spS{\incmt{k}{k+1}}|\gamma_k|^2\leq S_{1,\alpha/2}  \mysum{k}{1}{j-1}|\gamma_{k}|^{2}
\end{equation*}
Then, from the last inequality we conclude that  for all integer $n\leq M$,  
\begin{equation*}
\texpt{S_{1,\alpha/2}^{-d/2}\set{ \Lj}^{n}}\leq\texpt{ S_{1,\alpha/2}^{n-d/2}}\left(\mysum{k}{1}{j-1}|\gamma_{k}|^{2}\right)^n.
\end{equation*}
Thus (a) now follows easily from  last inequality. On the other hand,  (b) follows from the Taylor expansion (\ref{Tay.exp}) applied to (\ref{gen.term}). We remark that 
$C_{n,j}^{(\alpha)}(V)= R_{j,d}^{(n)}(0)$. Therefore the last expression is finite, according to (\ref{finite.remainder}).
\end{proof}

\begin{rmk}
Lemma \ref{lem.fin.rem} allows us to bound the remainders by a constant for all $t\geq0$ and shows the finiteness of both the remainders and the coefficients $C_{n,j}^{(\alpha)}(V)$ by proving the finiteness of the expectations under the condition $n\leq M<\frac{d+\alpha}{2}$. Indeed, this condition is introduced to make sense of the Taylor expansion of order M when it is applied to the function (\ref{expt,S1,Lj}). As a consequence our results are dimensional dependent. The reason why this does not happen when  $\alpha=2$ is that in this case the time change is trivial, $S_{t,1}=t$, and $L_j^{(2)}$ is nonrandom function.  These two facts considerably reduce all above computations, thereby the dimension  only appearing in the integrals involving $\wh{V}$, which are finite since $V\in \calS(\Rd)$.
\end{rmk}

\section{An improvement for dimension $d=1, 2, 3$.}\label{sec:improvement}
We recall the basic inequality
$$\left( \mysum{k}{1}{j-1}a_k\right)^M\leq (j-1)^{M-1}\mysum{k}{1}{j-1}a_k^M,$$
valid for all $j\geq 2$ and positive numbers $\set{a_k}_{k=1}^{j-1}.$

From last inequality and \eqref{exp.form.Lj}, it follows for all integer $M\geq1$ that
there exists a constant $C_{j,M}>0$ such that
\begin{align}\label{case.small.d}
C_{j,M}^{-1}\texpt{S_{1}^{-d/2}\set{\Lj}^M}\leq \mysum{k}{1}{j-1}\texpt{ 
 \left(\spS{1 - (\incmt{1}{j})}\spS{\incmt{k}{k+1}}\right)^M S_1^{-M-\frac{d}{2}}}
 |\gamma_k|^{2M} + \nonumber \\
\mysum{r}{1}{j-2}\mysum{s}{r+1}{j-1}\texpt{(\spS{\incmt{r}{r+1}}\spS{\incmt{s}{s+1}})^M S_1^{-M-\frac{d}{2}}}|\gamma_r-\gamma_s|^{2M},
\end{align}
whenever the expectations involved in the last expression are finite.
The purpose of this section is to provide conditions under which these last expectations are finite for dimension $d =1,2$ and $3$.

We proved in \S\ref{sec:trace-sub} that
\begin{equation*}
\spS {1-(\incmt{1}{j})} + \mysum{k}{1}{j-1}\spS{\incmt{k}{k+1}}=S_{1},
\end{equation*}
for $\set{\lambda_k}_{k=1}^{J}$ satisfying \eqref{lamb.ineq} and where the random variables on the right hand side of the last equality are independent. In particular, it follows that
 $S_{1}\geq \spS{l_0}+\spS{l_1}$ for any distinct $l_0,l_1 \in \set{1-(\incmt{1}{j}),\incmt{k}{k+1}}_{k=1}^{j-1}$. Now, observe that each expectation in \eqref{case.small.d} can be written as 
 $\texpt{(\spS{l_1}\spS{l_0})^M S_1^{-M-\frac{d}{2}}}$, and these expectations satisfy
\begin{equation*}
\texpt{\frac{(\spS{l_1}\spS{l_0})^M}{ S_1^{M+\frac{d}{2}}}}\leq \texpt{\frac{(\spS{l_1}\spS{l_0})^M}
{(\spS{l_1}+\spS{l_0})^{M+\frac{d}{2}}}}.
\end{equation*}

\begin{lem}\label{finer.inequality.Lem}
Let $j\geq2$ and $\set{\lambda_k}_{k=1}^{j}$ satisfying \eqref{lamb.ineq}. Let $l_0,l_1$ be two distinct numbers in $\set{1-(\incmt{1}{j}),\incmt{k}{k+1}}_{k=1}^{j-1}$. Then,
\begin{equation*}\label{Lemma1smallcase}
0\leq\texpt{\frac{(\spS{l_0}\spS{l_1})^M}{(\spS{l_0}+\spS{l_1})^{M+d/2}}}<\infty,
\end{equation*}
provided that $M/2-d/4<\alpha/2$.

In particular, when
\begin{enumerate}
\item[i)] M=1, if $\frac{2-d}{2}< \alpha$.
\item[ii)] M=2, if $\frac{4-d}{2}< \alpha$.
\end{enumerate}
\end{lem}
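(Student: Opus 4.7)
The plan is to combine the AM--GM inequality with the scaling and independence properties of the subordinator already established in \S\ref{sec:trace-sub}. Non-negativity of the expectation is immediate since the integrand is non-negative, so the real content is finiteness.

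First I would replace the denominator by the cleaner lower bound $\spS{l_0}+\spS{l_1}\geq 2\sqrt{\spS{l_0}\spS{l_1}}$ (AM--GM). Raising to the power $M+d/2$ and dividing yields the pointwise bound
\begin{equation*}
\frac{(\spS{l_0}\spS{l_1})^M}{(\spS{l_0}+\spS{l_1})^{M+d/2}} \;\leq\; \frac{1}{2^{M+d/2}}\,(\spS{l_0}\spS{l_1})^{(M-d/2)/2}.
\end{equation*}
Because $\spS{l_0}$ and $\spS{l_1}$ are independent and $\spS{l}\eid S_{l}$, the expectation factors:
\begin{equation*}
\texpt{\frac{(\spS{l_0}\spS{l_1})^M}{(\spS{l_0}+\spS{l_1})^{M+d/2}}} \;\leq\; \frac{1}{2^{M+d/2}}\,\texpt{S_{l_0}^{(M-d/2)/2}}\,\texpt{S_{l_1}^{(M-d/2)/2}}.
\end{equation*}

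Next I would invoke the scaling relation $S_{t,\alpha/2}\eid t^{2/\alpha}S_{1,\alpha/2}$ (a direct consequence of the Laplace transform in \S\ref{sec:stable-sub}) to rewrite each factor as $l_i^{(M-d/2)/\alpha}\,\texpt{S_{1,\alpha/2}^{(M-d/2)/2}}$. The moment formula \eqref{gen.expt.S1} then guarantees that $\texpt{S_{1,\alpha/2}^{\eta}}<\infty$ precisely when $\eta<\alpha/2$. Here $\eta=(M-d/2)/2$, so the finiteness condition becomes $(M-d/2)/2<\alpha/2$, i.e.\ exactly the hypothesis $M/2-d/4<\alpha/2$ of the lemma. (Note that when $M<d/2$ the exponent $(M-d/2)/2$ is negative and hence automatically lies in $(-\infty,\alpha/2)$, so the bound requires no extra work in that range.)

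Finally, parts (i) and (ii) are immediate specializations: substituting $M=1$ gives $1/2-d/4<\alpha/2$, equivalently $(2-d)/2<\alpha$, and $M=2$ gives $1-d/4<\alpha/2$, equivalently $(4-d)/2<\alpha$. I do not anticipate a serious obstacle here; the only mildly delicate point is checking that the AM--GM bound is tight enough to recover the stated threshold $M/2-d/4<\alpha/2$ rather than the weaker threshold $M-d/2<\alpha/2$ one gets from the naive bound $\spS{l_0}\spS{l_1}\leq(\spS{l_0}+\spS{l_1})^2/4$. The factor-of-two improvement in the effective exponent comes precisely from matching the square root in AM--GM against the moment constraint on $S_{1,\alpha/2}$.
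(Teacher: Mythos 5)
Your proof is correct and follows essentially the same route as the paper: the same AM--GM bound $2(ab)^{1/2}\leq a+b$ to reduce the denominator to $(\spS{l_0}\spS{l_1})^{M/2+d/4}$, the same factoring by independence, and the same appeal to the moment formula \eqref{gen.expt.S1} at exponent $M/2-d/4$. The only difference is presentational.
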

\begin{proof}
Because of the inequality $2(ab)^{1/2}\leq a + b$, for any $a,b\geq0$, we have that
\begin{align*}
\texpt{\frac{(\spS{l_0}\spS{l_1})^M}{(\spS{l_0}+\spS{l_1})^{M+d/2}}}\leq
2^{-M-d/2}\texpt{\frac{(\spS{l_0}\spS{l_1})^M}{(\spS{l_0}\spS{l_1})^{M/2+d/4}}}.
\end{align*}
Now, recall that $\spS{l_0}$ and $\spS{l_1}$ are independent and $\spS{l_i} \eid  l_i^{2/\alpha}S_{1,\alpha/2}$ . Therefore,
\begin{equation}\label{finer.inequaality}
\texpt{\frac{(\spS{l_0}\spS{l_1})^M}{(\spS{l_0}\spS{l_1})^{M/2+d/4}}}= 
(l_0 l_1)^{\frac{2}{\alpha}\set{\frac{M}{2}-\frac{d}{4}}}
\left(\texpt{S_{1,\frac{\alpha}{2}}^{\frac{M}{2}-\frac{d}{4}}}\right)^{2}.
\end{equation}
 The result follows from the inequality $M/2-d/4<\alpha/2$ which guarantees the finiteness  of the last expectation.
\end{proof}

As an application of Lemma \ref{finer.inequality.Lem}, we have 
\begin{cor}\label{improve.cor}

Assume $j\geq2$.
\begin{enumerate}
\item[(i)] For $\frac{1}{2}<\alpha<2$ and $d=1$, we have  $$\texpt{S_{1,\frac{\alpha}{2}}^{-1/2}\Lj}<\infty.$$
 \item[(ii)]  For $d=1$ and $\frac{3}{2}<\alpha<2$,
 $d=2$ and $1<\alpha<2$, 
 $d=3$ and $\frac{1}{2}<\alpha<2$, we have 
\begin{equation*}
\texpt{S_{1,\frac{\alpha}{2}}^{-d/2}\set{ \Lj}^{2}}<\infty.
\end{equation*}
\end{enumerate}
\end{cor}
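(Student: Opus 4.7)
The plan is to reduce both parts of the corollary to a direct application of inequality \eqref{case.small.d} combined with Lemma \ref{finer.inequality.Lem}. First, I would fix $\lambda \in I_j$ and $\theta \in \R^{(j-1)d}$ and recall from \eqref{case.small.d} that
\[
\texpt{S_{1,\frac{\alpha}{2}}^{-d/2}\set{\Lj}^M} \leq C_{j,M}\Bigl[\sum_{k=1}^{j-1} \texpt{\tfrac{(\spS{1-(\incmt{1}{j})}\spS{\incmt{k}{k+1}})^M}{S_1^{M+d/2}}}|\gamma_k|^{2M} + \sum_{r<s}\texpt{\tfrac{(\spS{\incmt{r}{r+1}}\spS{\incmt{s}{s+1}})^M}{S_1^{M+d/2}}}|\gamma_r-\gamma_s|^{2M}\Bigr].
\]
Since the right-hand side is a finite sum with fixed multipliers $|\gamma_k|^{2M}$ and $|\gamma_r-\gamma_s|^{2M}$, it suffices to show that every expectation appearing on the right is finite. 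Each one has the form $\texpt{(\spS{l_0}\spS{l_1})^M S_1^{-M-d/2}}$ for distinct $l_0,l_1 \in \{1-(\incmt{1}{j}),\incmt{k}{k+1}\}_{k=1}^{j-1}$.

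Next, I would invoke the identity \eqref{gen,expr.S1}, which says $S_1 = \spS{1-(\incmt{1}{j})} + \sum_{k=1}^{j-1}\spS{\incmt{k}{k+1}}$, and positivity of each summand, to conclude $S_1 \geq \spS{l_0}+\spS{l_1}$. Therefore
\[
\texpt{\frac{(\spS{l_0}\spS{l_1})^M}{S_1^{M+d/2}}} \leq \texpt{\frac{(\spS{l_0}\spS{l_1})^M}{(\spS{l_0}+\spS{l_1})^{M+d/2}}},
\]
and Lemma \ref{finer.inequality.Lem} ensures the right-hand side is finite whenever $M/2 - d/4 < \alpha/2$, i.e., whenever $\alpha > M - d/2$.

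Finally, I would check that the hypotheses of (i) and (ii) fit this criterion. For (i), $M=1$ and $d=1$ require $\alpha > 1/2$, which is exactly the assumption. For (ii), $M=2$ requires $\alpha > 2 - d/2$, giving $\alpha > 3/2$ when $d=1$, $\alpha > 1$ when $d=2$, and $\alpha > 1/2$ when $d=3$, matching each of the three stated cases. There is no real technical obstacle here since the heavy lifting is done by Lemma \ref{finer.inequality.Lem}; the only care needed is bookkeeping, namely to verify that the scaling relation $\spS{l} \eid l^{2/\alpha}S_{1,\alpha/2}$ together with independence makes the powers of $|\gamma_k|$ and $|\gamma_r-\gamma_s|$ harmless (they are merely constants for fixed $\theta$), so that finiteness of the expectations propagates to finiteness of the full expression.
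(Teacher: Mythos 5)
Your proposal is correct and follows exactly the chain the paper has in mind: the corollary is stated immediately after the display \eqref{case.small.d}, the observation that $S_1 \geq \spS{l_0}+\spS{l_1}$, and Lemma \ref{finer.inequality.Lem}, and the paper explicitly introduces it with the phrase ``As an application of Lemma \ref{finer.inequality.Lem}.'' Your bookkeeping ($M/2-d/4<\alpha/2$ yielding $\alpha>1/2$ for $M=1,d=1$, and $\alpha>3/2$, $\alpha>1$, $\alpha>1/2$ for $M=2$ with $d=1,2,3$) matches the paper's conditions exactly.
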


The following is  a version of Lemma \ref{lem.fin.rem} for dimension $1,2,$ and $3$ where the condition $M<\frac{d+\alpha}{2}$ is  replaced by $-1< M/2-d/4< \alpha/2$. 

\begin{lem}\label{smallcase}
\hspace{20mm}
\begin{enumerate}
\item[(i)] For d=1, and M=1, we have for all $\frac{1}{2}<\alpha<2$ and $j\geq2$ that
\begin{align*}
\frac{T_{1}(j,t)}{ (2\pi)^{j} \palp(0)} =C_{0,j}^{(\alpha)}(V) - t^{\frac{2}{\alpha}}R_{j,1}^{(1)}(t).
\end{align*}
\item[(ii)] For M=2 and $j\geq2$, we have
\begin{align*}
\frac{T_{d}(j,t)}{ (2\pi)^{dj} \palp(0)} =\mysum{n}{0}{1}(-1)^{n}C_{n,j}^{(\alpha)}(V)t^{\frac{2n}{\alpha}} + t^{\frac{4}{\alpha}}R_{j,d}^{(2)}(t),
\end{align*}
when $d=1$  and $\frac{3}{2}<\alpha<2$, $d=2$ and $1<\alpha<2$, or $d=3$ and 
$\frac{1}{2}< \alpha< 2 $.
\end{enumerate}
where the remainders $\abs{R_{j,d}^{(2)}(t)}$ are bounded by a constant for all $t\geq 0$,
according to Corollary \ref{improve.cor} and the fact that $M/2-d/4>-1$ for all $M,d$ as stated above.
\end{lem}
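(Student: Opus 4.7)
The plan is to reduce Lemma \ref{smallcase} to the scheme of Lemma \ref{lem.fin.rem}(b), the only new input being the finiteness of the expectations $E[S_{1,\alpha/2}^{-d/2}\{L_j^{(\alpha)}(\lambda,\theta)\}^n]$ for the relevant $n$ under the relaxed dimensional hypothesis $M/2 - d/4 < \alpha/2$ instead of $M < (d+\alpha)/2$. Starting from the representation
$$\frac{T_d(j,t)}{(2\pi)^{dj}p_t^{(\alpha)}(0)} = \frac{\const}{(2\pi)^{dj}} \int_{I_j}\int_{\R^{(j-1)d}} E\bigl[S_{1,\alpha/2}^{-d/2} e^{-t^{2/\alpha}L_j^{(\alpha)}(\lambda,\theta)}\bigr] \opV \, d\theta_i \, d\lambda_i \, d\lambda_j$$
in \eqref{gen.term}, I would apply the Taylor expansion \eqref{Tay.exp} to $e^{-t^{2/\alpha}L_j^{(\alpha)}(\lambda,\theta)}$ term by term up to order $M{-}1$ with explicit remainder. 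The formal output is exactly the identity claimed, with the coefficients $C_{n,j}^{(\alpha)}(V)$ defined as in Lemma \ref{lem.fin.rem}(b) and remainder $R_{j,d}^{(M)}(t)$ as in \eqref{remainder.term}. Making this rigorous requires Fubini to swap expectation with the finite Taylor sum, which in turn demands only that each term be absolutely integrable.

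The absolute integrability is the whole content of the lemma. For (i), with $M=1$, one needs $E[S_{1,\alpha/2}^{-1/2}L_j^{(\alpha)}(\lambda,\theta)] < \infty$ in dimension $d=1$; for (ii), with $M=2$, one needs $E[S_{1,\alpha/2}^{-d/2}\{L_j^{(\alpha)}(\lambda,\theta)\}^2] < \infty$ in dimensions $d=1,2,3$. Both of these are provided by Corollary \ref{improve.cor}, whose proof proceeds via the bound \eqref{case.small.d}, which reduces the $M$-th moment of $L_j^{(\alpha)}$ weighted by $S_1^{-d/2}$ to a sum of terms of the form $E[(\spS{l_0}\spS{l_1})^M S_1^{-M-d/2}] \cdot |\text{polynomial in } \theta|^{2M}$. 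Since $\spS{l_0} + \spS{l_1} \leq S_1$ almost surely, this is majorized by $E[(\spS{l_0}\spS{l_1})^M/(\spS{l_0}+\spS{l_1})^{M+d/2}]$, and Lemma \ref{finer.inequality.Lem} (via $2\sqrt{ab}\leq a+b$ and independence together with \eqref{finer.inequaality}) collapses this to a multiple of $E[S_{1,\alpha/2}^{M/2-d/4}]^2$, which is finite precisely when $M/2 - d/4 < \alpha/2$ by \eqref{gen.expt.S1}. Checking cases gives $\alpha > 2 - d/2$ for $M=2$, matching the stated ranges $\alpha > 3/2,1,1/2$ for $d=1,2,3$, and $\alpha > 1/2$ for $M=1, d=1$.

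Once the expectations are finite, the coefficients $C_{n,j}^{(\alpha)}(V) = R_{j,d}^{(n)}(0)$ are controlled for $n = 0, \ldots, M$ by the polynomial-times-$\wh{V}$ bound of Lemma \ref{lem.fin.rem}, and the remainder $R_{j,d}^{(M)}(t)$ satisfies the same a.s.\ pointwise bound uniformly in $t \geq 0$ thanks to the factor $e^{-\beta_{M,j}^*(t)} \leq 1$. Integrating in $\lambda$ over $I_j$ and in $\theta$ against the product $|\sV{j-1}|\prod|\wh{V}(\theta_i)|$ — which is rapidly decaying since $V \in \calS(\R^d)$ — yields the desired uniform bound on $|R_{j,d}^{(M)}(t)|$. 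The only delicate point, which I view as the main (mild) obstacle, is the moment-counting in the chain \eqref{case.small.d}: one must verify that every term on the right is indeed of the AM-GM form $E[(\spS{l_0}\spS{l_1})^M/(\spS{l_0}+\spS{l_1})^{M+d/2}]$ for a pair of \emph{distinct} subordinator increments $l_0 \neq l_1$, so that Lemma \ref{finer.inequality.Lem} genuinely applies — this is immediate from the structure of $L_j^{(\alpha)}$ given by \eqref{exp.form.Lj}.
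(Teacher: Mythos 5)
Your proposal correctly identifies the overall strategy — expand $e^{-t^{2/\alpha}\Lj}$ by Taylor with remainder, then control the resulting expectations using Lemma \ref{finer.inequality.Lem} and Corollary \ref{improve.cor} under the relaxed hypothesis $M/2 - d/4 < \alpha/2$ — and this is the same route the paper takes. However, there is a genuine gap in the moment-to-integral bookkeeping.

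After reducing to \eqref{finer.inequaality}, you describe the output as ``a multiple of $E[S_{1,\alpha/2}^{M/2-d/4}]^2$'' and then assert that integrating over $I_j$ and $\theta$ is routine. But the ``multiple'' is $(l_0(\lambda)\,l_1(\lambda))^{\frac{2}{\alpha}(\frac{n}{2}-\frac{d}{4})}$, which is a \emph{$\lambda$-dependent} factor, not a constant, and it becomes unbounded near $\partial I_j$ whenever the exponent is negative, i.e.\ whenever $n/2 - d/4 < 0$. Corollary \ref{improve.cor} and Lemma \ref{finer.inequality.Lem} give only pointwise (in $\lambda$) finiteness of the expectation; they say nothing about integrability of this factor over $I_j$. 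Concretely, in part (ii) with $d=3$ the coefficient $C_{1,j}^{(\alpha)}(V)$ requires the case $n=1$, for which $n/2-d/4=-1/4<0$, so the factor $(l_0 l_1)^{-1/(2\alpha)}$ is singular and one must check $\int_{I_j}(l_0 l_1)^{-1/(2\alpha)}\,d\lambda<\infty$. The paper's proof devotes its final paragraph to exactly this, iterating the one-dimensional bound $\int_0^{\lambda_i}(\lambda_i-\lambda_{k+1})^{-1/(2\alpha)}d\lambda_{k+1}\le \tfrac{2\alpha}{2\alpha-1}$ (and the analogous bound for the factor $(1-\lambda_1+\lambda_j)^{-1/(2\alpha)}$) to conclude the $\lambda$-integral is at most $\left(\tfrac{2\alpha}{2\alpha-1}\right)^2$, valid precisely when $\alpha>1/2$. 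Your proposal instead singles out the ``moment-counting'' in \eqref{case.small.d} as the only delicate point, which is the less serious issue; the $\lambda$-integrability of the negative powers of $l_0,l_1$ is the step you would still need to supply.
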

\begin{proof}
We start by recalling that for any $j\geq2$ 
\begin{equation*}
I_{j}=\set{\lambda=(\lambda_1,...,\lambda_{j}): 0<\lambda_{j}<\lambda_{j-1}<...<\lambda_1<1}.
\end{equation*}

Next, under the notation given in Lemma \ref{lem.fin.rem}, we have that $\abs{C_{1,j}^{(\alpha)}(V)}$ and $\abs{ R_{j,d}^{(M)}(t)}$, $M=1,2$  are bounded by
\begin{align*}
\int_{I_j}\int_{\R^{(j-1)d}}\texpt{S_{1}^{-d/2}\set{\Lj}^M}\abs{\opV}d\lambda_j d\lambda_i d\theta_i.
\end{align*}
This last expression is also bounded, based on the facts given at the beginning of this section, up to some positive constant by terms of the form
\begin{equation*}
\int_{I_j}(l_0l_1)^{\frac{2}{\alpha}(\frac{M}{2}-\frac{d}{4})}d\lambda_j d\lambda_i\left(\texpt{S_{1,\frac{\alpha}{2}}^{\frac{M}{2}-\frac{d}{4}}}\right)^2\int_{\R^{(j-1)d}}
\mysum{k}{1}{j-1}\abs{\gamma_k}^{2M}\abs{\opV}d\theta_i
\end{equation*}
where $l_0=l_0(\lambda)$ and $l_1=l_1(\lambda)$ are two distinct numbers in
$\set{1-(\incmt{1}{j}),\incmt{k}{k+1}}_{k=1}^{j-1}$.

Now the term 
$$\int_{I_j}(l_0l_1)^{\frac{2}{\alpha}(\frac{M}{2}-\frac{d}{4})}d\lambda_j d\lambda_i
=\int_{0}^{1}\int_{0}^{\lambda_1}...\int_{0}^{\lambda_{j-1}}
(l_0l_1)^{\frac{2}{\alpha}(\frac{M}{2}-\frac{d}{4})}d\lambda_j...d\lambda_1$$ is clearly finite when $M/2-d/4\geq0$, which is the case for $M=1,2$, $d=1,2$ and $M=2$, $d=3$. But, when $M=1$ and $d=3$, we obtain $-1<M/2-d/4=1/2-3/4=-1/4$ and this case deserve special attention.

We observe that  for all $1\geq\lambda_i>\lambda_{k+1}$ we have 
$$\int_{0}^{\lambda_i}\lambda_{k+1}(\lambda_i-\lambda_{k+1})^{-\frac{1}{2\alpha}}d\lambda_{k+1}\leq
\int_{0}^{\lambda_i}(\lambda_i-\lambda_{k+1})^{-\frac{1}{2\alpha}}d\lambda_{k+1}\leq \int_{0}^{1}w^{-\frac{1}{2\alpha}}dw=\frac{2\alpha}{2\alpha-1}$$ and
$$\int_{0}^{\lambda_{j-1}}(1-\lambda_1+\lambda_j)^{-\frac{1}{2\alpha}}(\lambda_{j-1}-\lambda_j)^{-\frac{1}{2\alpha}}d\lambda_j\leq(1-\lambda_1)^{-\frac{1}{2\alpha}}\frac{2\alpha}{2\alpha-1}$$
provided that $\alpha>\frac{1}{2}$. Then, it is not difficult to see that
\begin{align*}
\int_{I_j}(l_0l_1)^{-\frac{1}{2\alpha}}d\lambda_j d\lambda_i \leq \left(\frac{2\alpha}{2\alpha-1}\right)^2.
\end{align*}
\end{proof}

\section{Proof of  Theorem \ref{mainthm}.}\label{sec:proof.theorem}
{\bf Proof of part (a):} Recall that, for $J\geq2$, we have defined

\begin{equation*}\label{J+1-remainder}
R_{J+1}(t)= \frac{ r_{J+1}(t)}{(2\pi)^{(J+2)d} \palp(0)}
\end{equation*}
and also showed, according to Proposition \eqref{last.remainder}, that this remainder is   bounded by a constant for  $0\leq t<1$.
Also $M<\frac{d+\alpha}{2}$ implies that $R_{2,d}^{(M)}(t),...,R_{(M+1),d}^{(1)}(t)$ and $R_{M+2}(t)$ are, according to Lemma \ref{finite.remainder}, bounded by a constant for $0<t$.

 Next, $(a)$ follows by substituting the terms found in Lemma \ref{finite.remainder} into \eqref{trace.3}. More precisely, 
\begin{align}\label{general.trace.formula}
\frac{\trace}{\palp(0)}=- t\ioRd V(\theta)d\theta + \mysum{j}{2}{J} \mysum{n}{0}{M-1}(-1)^{n+j}C_{n,j}^{(\alpha)}(V)t^{\frac{2n}{\alpha}+j}\nonumber \\ 
(-t)^{J+1}R_{J+1}(t)+ \mysum{j}{2}{J}
(-1)^{j+M}t^{j  + \frac{2M}{\alpha}}R_{j,d}^{(M)}(t).
\end{align}

Therefore, it suffices to take
\begin{align*}\label{REMAINDER}
R_{J+1}^{(\alpha)}(t)=t^{-\Phi_{J+1}^{(\alpha)}(M)}\left\{(-t)^{J+1}R_{J+1}(t)+ \mysum{j}{2}{J}
(-1)^{j+M}t^{j  + \frac{2M}{\alpha}}R_{j,d}^{(M)}(t)\right\},
\end{align*}
and this proves Theorem \ref{mainthm}.

Moreover, Theorem \ref{2.mainthm} now follows by noticing that
\begin{align*}
\mysum{j}{2}{J} \mysum{n}{0}{M-1}(-1)^{n+j}C_{n,j}^{(\alpha)}(V)
t^{\frac{2n}{\alpha}+j}= \sum\limits_{\substack {\frac{2n}{\alpha}+j<\Phi_{J+1}^{(\alpha)}(M) \\ 2\leq j\leq J,\,\, 0\leq n\leq M-1}}(-1)^{n+j}C_{n,j}^{(\alpha)}(V)t^{\frac{2n}{\alpha}+j} \\+\sum\limits_{\substack {\frac{2n}{\alpha}+j\geq\Phi_{J+1}^{(\alpha)}(M) \\ 2\leq j\leq J,\,\, 0\leq n\leq M-1}}(-1)^{n+j}C_{n,j}^{(\alpha)}(V)t^{\frac{2n}{\alpha}+j}
\end{align*}
and
$$t^{\frac{2n}{\alpha}+j}=\mathcal{O}(t^{\Phi_{J+1}^{(\alpha)}(M)}),$$ as $\tgo$, provided that
$\frac{2n}{\alpha}+j\geq\Phi_{J+1}^{(\alpha)}(M)$.

\bigskip
{\bf Proof of part (b):} We begin by recalling several basic facts about weak convergence.  A convenient reference for this material is \cite{Billingsley}. 
Let  $X_r$, $X$ be $k$-dimensional random vectors, possibly defined on different probability spaces. We recall  that $X_r$ converges weakly to $X$, denoted by $X_r \Rightarrow X$, if  $$\lim\limits_{r\to\infty}F_{X_{r}}(x)=F_{X}(x),$$
for every continuity point $x$ of $F_{X}$, where  $F_{X_r}$ and $F_{X}$ are the distribution functions of $X_r$ and $X$, respectively. The followings statements are consequences of weak convergence and we state them here as a facts, referring the reader again to \cite{Billingsley}.
\begin{enumerate}
\item[F1] Suppose that $h$: $\R^{k}\rightarrow \R^j$ is measurable and that the set $D_h$ of discontinuities of $h$ is measurable. If $X_r \Rightarrow X$ and $P(X\in D_h)=0$,
then $h(X_r)\Rightarrow h(X)$.
\item[F2] Let $X_r, X$ be $k$-dimensional random--vectors. Then, $X_r \Rightarrow X$ if and only if for every bounded, continuous function $f$, we have
$$\lim_{r\to\infty}\texpt{f(X_r)}=\texpt{f(X)}.$$
\item[F3] Let $Y_r, Y$ be real valued random variables. If $Y_r \Rightarrow Y$ and the $\set{Y_r}_{r\in \mathbb{N}}$ is uniformly integrable, then Y is integrable and
$$\lim_{r\to \infty}\texpt{Y_r}=\texpt{Y}.$$ It is a standard fact that if
$$\sup\limits_{r}\texpt{\abs{Y_r}^{p}}<\infty,$$ 
for some $p>1$, then $\set{Y_r}_{r\in \mathbb{N}}$ is uniformly integrable.
\item[F4] Let  $X_r$, $X$ be $k$-dimensional random vectors. Then, $X_r \Rightarrow X$ if and only if for all $v \in \R^k$, $<v,X_r>\,\ \Rightarrow\,\, <v, X>.$
\end{enumerate}

Since we are only interested in $\alpha$'s close to $2$, it suffices to prove that if 
$n,d \geq 1$ are positive integers satisfying $n\leq\frac{1+d}{2}$,  then for all $ j\geq 2$ we have
\begin{equation*} 
\lim_{r\to \infty}C_{n,j}^{(\alpha_r)}(V)=C_{n,j}^{(2)}(V),
\end{equation*}
for any sequence  $\set{\alpha_r}_{r \in \mathbb{N}}$  satisfying
 $\frac{3}{2}< \alpha_r<2$ and  $ \alpha_r \uparrow 2.$
 
To prove last statement, we need to introduce some notation. We recall that 
 $I_j\subset\R^{j}$ has been defined as
\begin{equation*}
I_{j}=\set{\lambda=(\lambda_1,...,\lambda_{j}): 0<\lambda_{j}<\lambda_{j-1}<...<\lambda_1<1}.
\end{equation*}
 We also set 
 \begin{align*}
 X_r(\lambda)&=(S^*_{1-(\incmt{1}{j}), \frac{\alpha_r}{2}}, S^*_{\incmt{j-1}{j},\frac{\alpha_r}{2}},..., S^*_{\incmt{1}{2},\frac{\alpha_r}{2}}), \\ \\
 X(\lambda) &=(1-(\incmt{1}{j}), \incmt{j-1}{j},... , \incmt{1}{2}),\\ \\
h_{n,d}(x_0,x_1,...,x_j)&=\left\{\begin{array}{cc}
       \frac{ \left( x_0\mysum{k}{1}{j-1}x_k \abs{\gamma_k}^2+\mysum{m}{1}{j-2}\mysum{s}{m+1}{j-1}x_m x_s\abs{\gamma_m-\gamma_s}\right)^n}
    {\left( \mysum{k}{0}{j-1}x_k \right)^{n+\frac{d}{2}}}, & \mbox{for $x_0>0$,..., $x_j>0$},\\ \\
       0, & \mbox{otherwise.}
              \end{array}
\right.
\end{align*}
 With this notation, 
 $$h_{n,d}(X_r(\lambda))=S_{1,\frac{\alpha_r}{2}}^{-d/2}\set{L_j^{(\alpha_r)}(\lambda,\theta)}^n,$$ 
 $$h_{n,d}(X(\lambda))=\set{L_{j}^{(2)}(\lambda,\theta)}^n.$$
\\
We now divide our proof into 5 steps.

{\bf Step 1.} $X_r(\lambda) \Rightarrow X(\lambda)$.

 To see this, we recall that for 
$t\in \set{1-(\incmt{1}{j}),\incmt{k}{k+1}}_{k=1}^{j-1}$ and $\lambda>0$, 
$$\texpt{e^{-\lambda \spS{t,\alpha_r/2}}}=e^{-t\lambda^{\alpha_r/2}}.$$
This expectation corresponds to the Laplace transform of $\spS{t,\alpha_r/2}$ and uniquely determines its distribution. We conclude that
$$\lim\limits_{r\rightarrow+\infty}\texpt{e^{-\lambda \spS{t,\alpha_r/2}}}=
e^{-t\lambda}.$$ 
Thus, $\spS{t,\alpha_r/2}\Rightarrow t$. On the other hand, due to the independence of  $\spS{1-(\incmt{1}{j}),\alpha_r/2}$, $\spS{\incmt{j-1}{j},\alpha_r/2}$,\dots \\$\spS{\incmt{1}{2},\alpha_r/2}$, the fact that $f(z)=e^{iz}$ is bounded in $\R$ and F2, we also obtain  for every $v\in \R^{j+1}$ that
\begin{equation*}
\texpt{e^{-i<v,X_r(\lambda)>}}\rightarrow \texpt{e^{-i<v,X(\lambda)>}}.
\end{equation*}
as $r\to \infty$. The result follows from F4.
\bigskip

{\bf Step 2.} $h_{n,d}(X_{r}(\lambda)) \Rightarrow h_{n,d}(X(\lambda))$. 

We note that each component of the vector $X(\lambda)$ is positive. Thus, by our definition of $h_{n,d}$, it is clear that $X(\lambda)$ belongs to the set of continuity points  of $h_{n,d}$. Then,
$P(X(\lambda) \in D_{h_{n,d}})=0$ and the result follows from F1.
\bigskip

{\bf Step 3.} $\set{h_{n,d}(X_r)}_{r \in \mathbb{N}}$ is uniformly integrable.

We shall show that there exist a $p>1$ and a function $C(n,d,p,\theta)>0$ such that

\begin{equation}\label{ineq.unif.int}
\sup\limits_r \texpt{\set{h_{n,d}(X_r(\lambda))}^{p}}\leq C(n,d,p,\theta).
\end{equation}

To do this, we consider two cases to determine a proper $p$.
\\

{\bf Case 1.} Suppose $n-\frac{d}{2}\leq 0$.  In \S \ref{sec:coeffic.sect}, we proved that 
\begin{equation*}\label{unif.int}
h_{n,d}(X_r(\lambda))\leq S_{1,\frac{\alpha_r}{2}}^{n-\frac{d}{2}}\left(\mysum{k}{1}{j-1}\abs{\gamma_k}^2\right)^n. 
\end{equation*}

To prove (\ref{ineq.unif.int}), it suffices to show that
 $\sup\limits_{r}\texpt{S_{1,\frac{\alpha_r}{2}}^{p\left(n-\frac{d}{2}\right)}}$ is bounded for some $p>1$.
Recall from \S \ref{sec:stable-sub} that 
\begin{equation}\label{exptobound}
0<\texpt{S_{1,\frac{\alpha_r}{2}}^{p\left(n-\frac{d}{2}\right)}}=
\frac{\Gamma(1-\frac{2p}{\alpha_r}\left(n-\frac{d}{2}\right))}
{\Gamma(1-p\left(n-\frac{d}{2}\right))}<\infty, 
\end{equation}
 provided that
\begin{equation}\label{pcondition}
p\left(n-\frac{d}{2}\right)<\frac{\alpha_r}{2}<1.
\end{equation}

If $n-\frac{d}{2}=0$, we take $C(n,d,p,\theta)=\left(\mysum{k}{1}{j-1}\abs{\gamma_k}^2\right)^{pn}$ and any $p>1$, since clearly in this case
$\texpt{S_{1,\frac{\alpha_r}{2}}^{p\left(n-\frac{d}{2}\right)}}=1$.

If $n-\frac{d}{2}<0$, it is clear that  \eqref{pcondition} is satisfied for any $p>1$. But, we wish to pick $p$ so that \eqref{exptobound} is uniformly bounded in $r$. To do this in a suitable manner, we require the following well--known property of the the gamma function(see \cite{Pascal}). There exists $\mu_0\in(1,2)$ such that $\Gamma(z)$ is decreasing on $(0,\mu_0]$ and increasing over $(\mu_0,+\infty)$.  
Now, from the last  property and the fact that each $\alpha_r$ satisfies $1<\frac{2}{\alpha_r}<2$, it follows that for any $p>\max\set{1,\frac{1}{\frac{d}{2}-n}}$, 
\begin{equation}
\Gamma(2)\leq\Gamma\left(1+\frac{2p}{\alpha_r}\left(\frac{d}{2}-n\right)\right)\leq\Gamma\left(1 + 2p\left(\frac{d}{2}-n\right)\right).
\end{equation}
Therefore,
$$\sup\limits_r \texpt{\set{h_{n,d}(X_r(\lambda))}^{p}}\leq
\frac{\Gamma(1-2p\left(n-\frac{d}{2}\right))}
{\Gamma(1-p\left(n-\frac{d}{2}\right))}\left(\mysum{k}{1}{j-1}\abs{\gamma_k}^2\right)^{pn}=C(n,d,p,\theta).$$

\bigskip
{\bf Case 2.} Suppose $n-\frac{d}{2}>0$.
Because of the inequality $0<n-\frac{d}{2}\leq \frac{1}{2}$, which is equivalent to
$0<2n-d\leq 1$, we conclude that $d=2n-1$, since $d$ and $n$ are positive integers. Therefore $n-\frac{d}{2}=\frac{1}{2}$. 
In this case, from the tools we developed in \S \ref{sec:improvement} we obtain that $0\leq h_{n,d}(X_r(\lambda))$ is bounded, up to some positive constant, by a finite sum containing terms of the form
\begin{equation}
\mysum{k}{1}{j-1}\abs{\gamma_k}^{2n}
\left(\spS{l_0,\alpha_r/2}\spS{l_1,\alpha_r/2}\right)^{\left(\frac{n}{2}-\frac{d}{4}\right)},
\end{equation}
for any two distinct numbers $l_0,l_1$ in $\set{1-(\incmt{1}{j}),\incmt{k}{k+1}}_{k=1}^{j-1}$.

Next, due to the fact that $\spS{l_i,\frac{\alpha_r}{2}}$, $i=0,1$ are independent and have law $l_i^{\frac{2}{\alpha_r}}S_{1,\alpha_r /2}$, $0<l_i<1$ and $n-\frac{d}{2}=\frac{1}{2}$, we conclude that
$\texpt{h_{n,d}(X_r(\lambda))^2}$ is bounded up to some positive constant by 
\begin{equation*}
\left(\texpt{S_{1,\frac{\alpha_r}{2}}^{\frac{1}{2}}}\mysum{k}{1}{j-1}\abs{\gamma_k}^{2n}\right)^2
\end{equation*}
We also know that $$\texpt{S_{1,\alpha_r/2}^{1/2}}=\frac{\Gamma(1-\frac{1}{\alpha_r})}{\Gamma(\frac{1}{2})}.$$

On the other hand, the function $\Gamma(z)$ is decreasing over (0,1). Next, we observe that each $\alpha_r$ satisfies
$$\frac{1}{3}\leq 1- \frac{1}{\alpha_r}\leq \frac{1}{2},$$ which yields
\begin{align*}
\texpt{S_{1,\alpha_r/2}^{1/2}}\leq\frac{\Gamma(\frac{1}{3})}{\Gamma(\frac{1}{2})}.
\end{align*}
For this, $$C(n,d,2,\theta)=C\left(\frac{\Gamma(\frac{1}{3})}{\Gamma(\frac{1}{2})}\mysum{k}{1}{j-1}\abs{\gamma_k}^{2n}\right)^2,$$ for some $C>0.$

\bigskip
{\bf Step 4.} $\lim\limits_{r\rightarrow+\infty}\texpt{h_{n,d}(X_r(\lambda))}=\texpt{h_{n,d}(X)}$.

This is a consequence of Steps 2, 3 and F3.

\bigskip
{\bf Step 5.} Notice that by H\"older's inequality and Step 3, we have proved that for some $p>1$, 
\begin{equation}\label{ineq.exp.weak}
\sup\limits_r \texpt{h_{n,d}(X_r(\lambda))}\leq\sup\limits_r\left(\texpt{h_{n,d}(X_r(\lambda))^p}\right)^{\frac{1}{p}}\leq \set{C(n,d,p,\theta)}^{\frac{1}{p}},
\end{equation}
where $\set{C(n,d,p,\theta)}^{\frac{1}{p}}>0$ is, indeed, a polynomial function in the variable $\theta$. Using the fact $V\in \mathcal{S}(\R^d)$ and the bounds in Step 3,  we have 
$$\int_{I_{j}}\iogRd{(j-1)}\set{C(n,d,p,\theta)}^{\frac{1}{p}}\abs{ \opV} d\theta_i d\lambda_i d\lambda_j<+\infty.$$ 

Therefore, by $\ref{ineq.exp.weak}$, Step 4, and dominated convergence theorem, we arrive at the desired result and this completes the proof of part (b). 

\section{Explicit form of some coefficients.}\label{sec:explicitcoeff}

In this section, we compute some coefficients explicitly and again show their  finiteness  by applying some basic inequalities arising from Lemmas \ref{finite.remainder} and \ref{Lemma1smallcase}.

For $n=0$ and any $j\geq1$,  we obtain by applying iterated times (\ref{Plancherel}) that
\begin{align*}
C_{0,j+1}^{(\alpha)}(V) &= \frac{ \const \texpt{ S_{1,\frac{\alpha}{2}}^{-d/2} } } { (2\pi)^{(j+1)d} } \lgint{j}{j}\sV{j}\pV{j}d\theta_i d\lambda_i d\lambda_j \nonumber \\ &= \frac{1}{(j+1)!}\ioRd V^{j+1}(\theta)d\theta, 
\end{align*}
where we have also used that $(4\pi)^{d/2}p_{1}^{(\alpha)}(0)=\texpt{S_{1,\frac{\alpha}{2}}^{-d/2}}$ and $\const= \frac{\pi^{d/2}}{ p_{1}^{(\alpha)}(0)}$.

The following Lemma will be useful to prove that the constants appearing in Corollary \eqref{maincor}  are strictly positive. Part of the following proof can be found in \cite{Davar}.  

\begin{lem}\label{Lem.Sub.estimate}
Given $0<\alpha<2$, there exists  $\alpf{N}>1$ such that 
\begin{equation*} \label{estimate for Sub}
\frac{1-e^{-v_{\alpha}}}{2}\leq P(1<S_{1,\frac{\alpha}{2}}< \alpf{N}) ,
\end{equation*}
where $\alpf{v}=(2-\alpha)\alpha^{\frac{\alpha}{2-\alpha}}2^{\frac{-2}{2-\alpha}}.$
\end{lem}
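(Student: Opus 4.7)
My plan is to bound $P(S_{1,\alpha/2} > 1)$ from below using an exponential Chebyshev-type inequality coming from the Laplace transform, and then to truncate the tail to get an upper cutoff $N_\alpha$.

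\textbf{Step 1 (Laplace/Chebyshev bound).} For any $\lambda > 0$, on the event $\{S_{1,\alpha/2}\leq 1\}$ we have $e^{-\lambda S_{1,\alpha/2}}\geq e^{-\lambda}$. Hence
\begin{equation*}
e^{-\lambda^{\alpha/2}} = E\!\left[e^{-\lambda S_{1,\alpha/2}}\right] \;\geq\; E\!\left[e^{-\lambda S_{1,\alpha/2}}\mathbf{1}_{\{S_{1,\alpha/2}\leq 1\}}\right] \;\geq\; e^{-\lambda}\,P(S_{1,\alpha/2}\leq 1),
\end{equation*}
so $P(S_{1,\alpha/2}\leq 1)\leq e^{\lambda-\lambda^{\alpha/2}}$ for every $\lambda>0$.

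\textbf{Step 2 (Optimization).} I would minimize the function $\varphi(\lambda)=\lambda-\lambda^{\alpha/2}$ on $(0,\infty)$. Setting $\varphi'(\lambda)=1-\tfrac{\alpha}{2}\lambda^{\alpha/2-1}=0$ gives the unique critical point $\lambda_\ast=(\alpha/2)^{2/(2-\alpha)}$, which is a minimum because $\alpha/2<1$. A direct computation yields
\begin{equation*}
\varphi(\lambda_\ast) = (\alpha/2)^{\alpha/(2-\alpha)}\!\left(\tfrac{\alpha}{2}-1\right) = -\tfrac{2-\alpha}{2}\,\alpha^{\alpha/(2-\alpha)}\,2^{-\alpha/(2-\alpha)} = -(2-\alpha)\,\alpha^{\alpha/(2-\alpha)}\,2^{-2/(2-\alpha)} = -v_\alpha,
\end{equation*}
after combining $-\alpha/(2-\alpha)-1 = -2/(2-\alpha)$. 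Substituting into the bound from Step 1,
\begin{equation*}
P(S_{1,\alpha/2}\leq 1) \;\leq\; e^{-v_\alpha}, \qquad\text{and hence}\qquad P(S_{1,\alpha/2} > 1) \;\geq\; 1 - e^{-v_\alpha}.
\end{equation*}

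\textbf{Step 3 (Truncation).} Since $S_{1,\alpha/2}<\infty$ almost surely, the events $\{1<S_{1,\alpha/2}<N\}$ increase as $N\uparrow\infty$ to $\{S_{1,\alpha/2}>1\}$, so $P(1<S_{1,\alpha/2}<N)\uparrow P(S_{1,\alpha/2}>1)$ by continuity of measure. Combined with Step 2, for all $N$ sufficiently large we have
\begin{equation*}
P(1<S_{1,\alpha/2}<N) \;\geq\; \tfrac{1}{2}\bigl(1 - e^{-v_\alpha}\bigr).
\end{equation*}
Choosing $N_\alpha>1$ to be any such $N$ finishes the proof.

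The only nontrivial step is the optimization bookkeeping in Step 2; everything else is a clean application of the exponential Chebyshev inequality and monotone convergence. There is no real obstacle beyond the careful algebraic identification of $\varphi(\lambda_\ast)$ with $-v_\alpha$.
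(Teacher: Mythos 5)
Your proof is correct and follows essentially the same route as the paper: the exponential Chebyshev bound $P(S_{1,\alpha/2}\leq 1)\leq e^{\lambda-\lambda^{\alpha/2}}$, optimization over $\lambda$ to obtain the exponent $-v_\alpha$, and then a truncation to choose $N_\alpha$. The only cosmetic difference is in the last step: the paper picks $N_\alpha$ so that $P(S_{1,\alpha/2}<N_\alpha)\geq\tfrac{1+e^{-v_\alpha}}{2}$ and subtracts $P(S_{1,\alpha/2}\leq 1)$, while you observe directly that $P(1<S_{1,\alpha/2}<N)\uparrow P(S_{1,\alpha/2}>1)\geq 1-e^{-v_\alpha}$ and take $N$ large enough; both are fine.
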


\begin{proof}
Let $a>0$ be fixed and  observe that $S_{1,\frac{\alpha}{2}}\leq a$ if and only if $e^{-\lambda S_{1,\frac{\alpha}{2}}}\geq e^{-\lambda a}$, for any  $\lambda>0$. Therefore, Chebyshev inequality tells us that

\[
P(S_{1,\frac{\alpha}{2}}\leq a)\leq \inf_{\lambda>0}e^{ \left(a\lambda-\lambda^{\frac{\alpha}{2}}\right)}
=e^{\left(-a^{\frac{\alpha}{\alpha-2}}v_{\alpha}\right)}.
\]

On the other hand, $\lim\limits_{n \to +\infty}P(S_{1,\frac{\alpha}{2}}< n)=1$ implies that given $\epsilon>0$, there exists a positive integer $N$ such that
\[
1-P(S_{1,\frac{\alpha}{2}}< n)\leq \epsilon, \,\,\,\, \textrm{for all $ n\geq N$.}
\]
It follows then that for $\epsilon= \frac{1-e^{-v_{\alpha}} }{2}$, there exists $\alpf{N}>1$ such that

\[
1-P(S_{1,\frac{\alpha}{2}}< \alpf{N}) \leq \frac{ 1-e^{-\alpf{v} }}{2}  \,\,\,\,\, \textrm{or} \hspace*{2mm}
P(S_{1,\frac{\alpha}{2}}< \alpf{N})\geq \frac{1+e^{-\alpf{v} } }{2}.
\]
Now use above facts with $a=1$, to obtain that 
\begin{eqnarray*}
P(1<S_{1,\frac{\alpha}{2}}<\alpf{N})&=&P(S_{1,\frac{\alpha}{2}}<\alpf{N})-P(S_{1,\frac{\alpha}{2}}\leq 1)\\
&\geq&  \frac{1+e^{-v_{\alpha}} }{2}-e^{-v_{\alpha}}\\
&=&\frac{1-e^{-v_{\alpha}}}{2}, 
\end{eqnarray*}
as desired. 
\end{proof}

\begin{rmk} Before proceeding, we give an explicit expression for $\alpf{N}$ when $\alpha=1$. Observe that $v_1= -\frac{1}{4}$. 
The $1/2$--subordinator $S$ can be expressed as the first hitting time for the standard one-dimensional  Brownian motion $\set{W_t}_{t\geq0}$. More precisely, 
\[
S_{t}=\inf\left\{s>0:W_s=\frac{t}{ \sqrt{2}}\right\}. 
\]
It is also known (See \cite[pp 23-24]{appleb} for details) that its density is given by
\[
\eta_{t}^{(1/2)}(s)= \frac{t}{2\sqrt{\pi}}s^{-3/2}e^{-t^2/4s}.
\]
Therefore, it is not difficult to see that for any $N>1$, 
\begin{align*}
P(1<S_{1,\frac{1}{2}}<N)=\frac{1}{2\sqrt{\pi}}\int_{1}^{N}s^{-3/2}e^{-1/4s}ds\geq 
\frac{e^{-1/4}}{2\sqrt{\pi}}\int_{1}^{N}s^{-3/2} ds= \frac{ e^{-1/4}} { \sqrt{\pi} } ( 1-N^{-1/2}).
\end{align*}
We can take then
$$\frac{ e^{-1/4} } { \sqrt{\pi} } ( 1-N_1^{-1/2} )=\frac{ 1-e^{-1/4} }{2}$$
or equivalently,
$N_1 = \set{1-\frac{\sqrt{\pi}}{2}(e^{1/4}-1)}^{-2}$ which is approximately $1.786$.
\end{rmk}
\bigskip
Let us now consider the case $n=1$ and $j=2$ . We recall  that 
\begin{equation}\label{S1casej=2,n=1}
\spS{1-(\incmt{1}{2})} + \spS{\incmt{1}{2}}=S_{1}, 
\end{equation}
provided $0<\lambda_2<\lambda_1<1$. In addition, $\spS{1-(\incmt{1}{2})}$ and $\spS{\incmt{1}{2}}$ are independent random variables. Then, it follows by Lemma \ref{lem.fin.rem} that
\begin{equation}
C_{1,2}^{(\alpha)}(V) = \frac{ \const K_1(d,\alpha) }{ (2\pi)^{d} }\langle -\Delta V, V\rangle=\frac{ \const K_1(d,\alpha)}{ (2\pi)^{d} }\int_{\Rd}|\nabla V(\theta)|^2\,d\theta, 
\end{equation}
where we have replaced $S_{1}$ by the left hand side of \eqref{S1casej=2,n=1} to obtain that
\begin{align*}\label{const for L.d.alp}
K_1(d,\alpha) &= \int_0^1 \int_0^{\lambda_1} 
\texpt{ \frac{ \spS{ 1-(\incmt{1}{2}) } \spS{\incmt{1}{2}} } 
{ (\spS{1-(\incmt{1}{2})} + \spS{\incmt{1}{2} })^{ 1+\frac{d}{2} } }}d\lambda_2 d\lambda_1 \nonumber \\
    &= \int_0^1 \int_0^{\lambda_1} \texpt{   \frac{ \spS{1-w} \spS{w} } { 
    ( \spS{1-w} + \spS{w} )^{  1+\frac{d}{2} } } }dw d\lambda_1.
\end{align*}

We now claim that  $K_1(d,\alpha)$ is both finite and strictly positive when either $d=1$ and $\frac{1}{2}<\alpha<2$, or $ d\geq2$ and $ 0<\alpha<2$ as follows. 

We start with  $d=1$ and $\frac{1}{2}<\alpha<2$. By Lemma \ref{Lemma1smallcase}, we obtain in this case that

\begin{equation*}
0\leq K_1(1,\alpha)\leq2^{-3/2}\texpt{S_{1,\alpha/2}^{1/4}}\int_0^1 \int_0^{\lambda_1}\set{(1-w)w}^{\frac{1}{2\alpha}}dw d\lambda_1.
\end{equation*}

On the other hand, when $d\geq2$ we have
\begin{eqnarray}\label{C_2^1 finite}
0 \leq K_1(d,\alpha)&\leq& \frac{1}{2} \int_0^1 \int_0^{\lambda_1 } \texpt{(\spS{1-w} + \spS{w})^{1-d/2}}dw d\lambda_1\\
& =& \frac{ \texpt{ S_{1,\alpha/2}^{1-d/2} } }{4}\nonumber, 
\end{eqnarray}
where we have used  the basic inequality 
 \begin{equation*}
\frac{ab}{(a+b)^{d/2+1}}=\frac{ab}{(a+b)^{d/2-1}(a+b)^2}\leq \frac{1}{2}(a+b)^{1-d/2},
\end{equation*}
valid for all $a,b>0$. The expectation in \eqref{C_2^1 finite} is finite for all $\alpha$ since
$1-\frac{d}{2}\leq0<\frac{\alpha}{2}$. Therefore $K_1(d,\alpha)$ is finite in the cases stated above.

Next, we prove that $K_1(d,\alpha)$ is strictly positive. We note that \eqref{CharF-LapS} implies that $S_{t}\eid t^{ \frac {2 }{\alpha} }S_{1}$. Therefore, we can write
\begin{equation*}
\spS{1-w}\eid (1-w)^{\frac{2}{\alpha}}X_1, \,\,\,\,\, \spS{w}\eid w^{\frac{2}{\alpha}}X_2, 
\end{equation*} 
where $X_1,X_2$ are independent copies of $S_{1}$.  That is, $X_1 \eid S_{1} \eid X_2$.  Thus, for any $0<w<1$, we have
\begin{align*}\label{ineq.exp.C_2^1}
\texpt{ \frac{ \spS{1-w} \spS{w} }{(\spS{1-w} + \spS{w})^{1+\frac{d}{2}} } } &=
\texpt{ \frac{ (1-w)^{\frac{2}{\alpha}}w^{2/\alpha}X_1X_2  }{ ( (1-w)^{2/\alpha}X_1  + w^{2/\alpha}X_2    ) ^{1 +\frac{d}{2}}}}
\nonumber \\
&\geq (1-w)^{  \frac{2}{\alpha} }w^{ \frac{2}{\alpha} }
 \texpt{  \frac{ X_1 X_2}{ ( X_1 +  X_2 )^{ 1+\frac{d}{2} }};1<X_1,X_2\ \leq \alpf{N} } \nonumber \\
 &\geq  \frac{ (1-w)^{ \frac{2}{\alpha} }w^{ \frac{2}{\alpha} } } {  (2\alpf{N} )^{ 1+ \frac{d}{2} } } P( 1< S_{1,\frac{\alpha}{2}} \leq \alpf{N} )^2. 
\end{align*}

From  Lemma \ref{Lem.Sub.estimate} and last inequality, we conclude that 
\begin{equation*}\label{C_2^1 positive}
K_1(d,\alpha)\geq  \frac{ (1-e^{-v_{\alpha}})^{2} }{4(2\alpf{N})^{ 1+ \frac{d}{2} }}  
 \int_0^1 \int_0^{\lambda_1}(1-w)^{  \frac{2}{\alpha} }w^{ \frac{2}{\alpha} }dw\,d\lambda_1>0.
\end{equation*}
Similarly, for either all $\alpha\in(1,2)$ and $d\geq2$,  or for $d\geq4$ and $\alpha \in (0,2)$,  we have 
\[C_{2,2}^{(\alpha)}(V)= \frac{ \const K_2(d,\alpha)}{ 2(2\pi)^{d}}\int_{\Rd}|\Delta V(\theta)|^2\,d\theta,
\] 
where
\[K_2(d,\alpha)= \int_0^1 \int_0^{\lambda_1}\texpt{ \frac{  ( \spS{1- (\incmt{1}{2} ) }\spS{\incmt{1}{2} } )^2 }{ 
    ( \spS{1-(\incmt{1}{2})} + \spS{\incmt{1}{2} })^{2+\frac{d}{2} } }}d\lambda_2\,   d\lambda_1.
\]
By applying the same argument as above, we have the following
\begin{enumerate}

\item[(i)] $\alpha\in(1,2)$ and $d\geq3$ or  for $d\geq4$ and $\alpha \in (0,2)$, we obtain
\[0<\frac{ (1-e^{-v_{\alpha}})^{2} }{4(2\alpf{N})^{ 2+ \frac{d}{2} }}  \int_0^1 \int_0^{\lambda_1}(1-w)^{  \frac{4}{\alpha} }w^{ \frac{4}{\alpha} }dw\,d\lambda_1
\leq K_2(d,\alpha)\leq \frac{ \texpt{S_{1,\frac{\alpha}{2}}^{2-d/2}} }{12}. 
 \]  
 \item[(ii)] $\alpha\in(1,2)$  and $d=2$, we have the same lower bound as in (i), but by Lemma \ref{smallcase} we obtain
 \[K_2(2,\alpha)\leq 2^{-3}\texpt{S_{1,\alpha/2}^{1/2}}
  \int_0^1 \int_0^{\lambda_1}\set{w(1-w)}^{\frac{1}{\alpha}}dwd\lambda_1\]

 \end{enumerate}
 
Likewise, it is not hard to prove that
 \begin{equation*}
 C_{1,3}^{(\alpha)}(V)=\frac{C_{d,\alpha}K_3(d,\alpha)}{(2\pi)^d}\int_{\R^d}V(\theta)\abs{\nabla V(\theta)}^2d\theta
  \end{equation*}
  where
  \begin{equation*}
  K_3(d,\alpha)=\int_{0}^{1}\int_{0}^{\lambda_1}\int_{0}^{\lambda_2}\texpt{
  \frac{ \spS{1-(\incmt{1}{3})}\spS{\incmt{1}{2}}+\spS{1-(\incmt{1}{3})}\spS{\incmt{2}{3}}  + \spS{\incmt{1}{2}}\spS{\incmt{2}{3}}}{(\spS{1-(\incmt{1}{3})}+\spS{\incmt{1}{2}} +\spS{\incmt{2}{3}})^{1+d/2}}}d\lambda_3 d\lambda_2 d\lambda_1,
  \end{equation*}
 is positive and finite provided that either $d\geq 2$ and $0<\alpha<2$ or
 $d=1$ and $\frac{1}{2}<\alpha<2$.
 
 For the rest of the paper, we will use the following notation for the constants given above,
\begin{align*}
\mathcal{M}_{d,\alpha}=\frac{C_{d,\alpha}K_3(d,\alpha)}{(2\pi)^d}
\,\,\, \textrm{and} \,\,\,
\mathcal{N}_{d,\alpha}=\frac{ \const K_2(d,\alpha)}{ 2(2\pi)^{d}}.
\end{align*}

\begin{rmk}
Based on the computations in \cite{Ba.Sab} and part (b) of Theorem \ref{mainthm}, we have 
under the conditions stated above that
\begin{align*}
\lim\limits_{\alpha\uparrow2}\mathcal{N}_{d,\alpha}=\frac{1}{120}\,\,\,and \,\,\,
\lim\limits_{\alpha\uparrow2}\mathcal{M}_{d,\alpha}=\frac{1}{12}.
\end{align*}

\end{rmk}

\section{Proof of Corollary \ref{maincor}.}\label{sec:proofcor}The proof uses a combination of Theorem \ref{mainthm} and Lemma \ref{smallcase}.
\bigskip

{\bf Case M=1}. 
\bigskip

\begin{enumerate}
\item[1)] When $d\geq2$, we invoke Theorem \ref{mainthm} with $J=3$. 
 We have in this case $1-\frac{d}{2}\leq 0<\frac{\alpha}{2}$. Therefore, we can consider any $\alpha$ on the right hand side of next expression, 
 
\begin{align}\label{case1}
\frac{\trace}{\palp(0)} + t\int_{\R^d} V(\theta)d\theta - \frac{t^2}{2}\int_{\R^d}V^2(\theta)d\theta + \frac{t^3}{3!}\int_{\R^d}V^3(\theta)d\theta = t^{\phi_4^{(\alpha)}(1)}R_4^{(\alpha)}(t),
\end{align}
where $\phi_4^{(\alpha)}(1)=\min\set{4,2+\frac{2}{\alpha}}$. Hence, {\bf (iii)} follows by noticing that
\begin{align*}
2+\frac{2}{\alpha}<4< 3+\frac{2}{\alpha}, \,\,\,\ when\,\, \alpha \in(1,2) \nonumber \\
4 \leq 2+\frac{2}{\alpha}< 3+\frac{2}{\alpha}, \,\,\,\ when \,\, \alpha \in (0,1]. 
\end{align*}

\item[2)]  For the case $d=1$, we use Lemma  \eqref{smallcase} which guarantees that \eqref{case1} is still true for $\frac{1}{2}<\alpha<1$. Thus {\bf (i)} holds. 
\end{enumerate}
\bigskip

{\bf Case M=2}.
\bigskip

 We apply Theorem \eqref{mainthm} with $J=4$ and
$2-\frac{d}{2}<\frac{\alpha}{2}$ to obtain
\begin{align}\label{case2}
&\frac{\trace}{\palp(0)} + t\int_{\R^d} V(\theta)d\theta -\frac{t^2}{2}\int_{\R^d}V^2(\theta)d\theta +
C_{1,2}^{(\alpha)}(V)t^{2+\frac{2}{\alpha}}  \nonumber \\
&+\frac{t^3}{3!}\int_{\R^d}V^3(\theta)d\theta - C_{1,3}^{(\alpha)}(V)t^{3+\frac{2}{\alpha}}
- \frac{t^4}{4!}\int_{\R^d}V^4(\theta)d\theta + C_{1,4}^{(\alpha)}(V)t^{4+\frac{2}{\alpha}} = t^{\phi_5^{(\alpha)}(2)}R_5^{(\alpha)}(t),
\end{align}
where $\phi_5^{(\alpha)}(2)=\min\set{5,2+\frac{2\cdot2}{\alpha}}$.
\begin{enumerate}
\bigskip
\item[1)] { \bf(iv)}, {\bf (v)} and {\bf (vi)} when $d\geq4$.

 We have $2-\frac{d}{2}\leq0<\frac{\alpha}{2}$ and then  any $\alpha$ can be considered. For $\alpha \in (0,1]$, we have $2\leq \frac{2}{\alpha},$ which implies that all the powers of $t$ containing $\alpha$ are larger  than 5 except $2+\frac{2}{\alpha}$. Comparing $2+\frac{2}{\alpha}$ with $5$ yields {\bf (v)} and {\bf (vi)}.
   Notice that {\bf (iv)} also follows since  all the power of $t$ containing $\alpha$ in \eqref{case2} are larger  than 4 (simply use the fact that $1<\frac{2}{\alpha}$) except  $2+\frac{2}{\alpha}$ which is less than 4 whenever $1<\alpha<2$.
    
   In addition, by Lemma \ref{smallcase} we obtain that (\ref{case2}) remains true in the following cases
  \item[2)] When $d=3$ and $\frac{1}{2}<\alpha <2$. Hence, {\bf (iv)} and {\bf (v)} holds by part 1).
\item[3)] When  $d=2$, $0<\alpha<1$. Then, {\bf (iv)} also holds for $d=2$.
\item[4)] When $d=1$,  $\frac{3}{2}<\alpha<2$. Thus, {\bf(ii)} holds. 
\end{enumerate}
This covers all the cases and completes the proof of Corollary \ref{maincor}.

\section{Explicit expansion for $\alpha=2/k$, $k\geq 2$ integer and $\alpha$ close to 2}\label{sec:Particular.expansions}
 
In this section we want to provide to the reader a better insight of our main theorem by finding a expansion formula of the trace when $\alpha= \frac{2}{k}$ with $k\geq2$ an integer and for values of $\alpha$ near 2, of course, under the condition that $2M-d <\alpha$, which is equivalent to $2M-d\leq 0$ when $0<\alpha\leq1$. We also give examples as an application to the results given below. We refer to the reader to the end of \S\ref{sec:explicitcoeff} for the definition of the constants $\mathcal{L}_{d,\alpha}$, $\mathcal{M}_{d,\alpha}$ and $\mathcal{N}_{d,\alpha}$.

\begin{thm}\label{expansion.alpha1}
Let $\alpha=1$ and $2M-d\leq0$. Then, for any $2\leq J\leq 2M+1$,
\begin{equation}
\frac{Tr(e^{-H_Vt}-e^{-H_1t})}{p_t^{(1)}(0)}+t\int_{\R^d}V(\theta)d\theta
-\mysum{l}{2}{J}t^{l}\left(\sum\limits_{\substack {2n+j=l, \\ j\geq 2}}(-1)^{n+j}C_{n,j}^{(1)}(V)\right)=\mathcal{O}(t^{J+1}),
\end{equation}
\end{thm}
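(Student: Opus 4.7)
The proposal is to derive this as a direct specialization of Theorem \ref{mainthm} (equivalently Theorem \ref{2.mainthm}) to $\alpha=1$, followed by a reorganization of the double sum by powers of $t$. First I would verify that the hypothesis matches: the condition $M<\frac{d+\alpha}{2}$ becomes $M<\frac{d+1}{2}$, and since $M$ is a positive integer, this is equivalent to $2M\leq d$, i.e.\ $2M-d\leq 0$, which is exactly what the theorem assumes.

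Next I would compute the relevant quantities at $\alpha=1$. The exponents $\frac{2n}{\alpha}+j=2n+j$ become integers, which is the crucial simplification. For $2\leq J\leq 2M+1$ one has $J+1\leq 2M+2=2+2M$, so
$$\Phi_{J+1}^{(1)}(M)=\min\{J+1,\,2+2M\}=J+1.$$
Applying Theorem \ref{mainthm} therefore gives
$$\frac{Tr(e^{-tH_V}-e^{-tH_1})}{p_t^{(1)}(0)}=-t\int_{\R^d}V(\theta)d\theta+\sum_{j=2}^{J}\sum_{n=0}^{M-1}(-1)^{n+j}C_{n,j}^{(1)}(V)\,t^{2n+j}+\mathcal{O}(t^{J+1}),$$
since $t^{\Phi_{J+1}^{(1)}(M)}R_{J+1}^{(1)}(t)=\mathcal{O}(t^{J+1})$ by the boundedness of $R_{J+1}^{(1)}$ on $(0,1)$ established in \S\ref{sec:boundedcoeff.rem}.

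The only remaining task is bookkeeping: I would regroup the double sum by the value $l=2n+j$. Split the index set $\{(n,j):2\leq j\leq J,\,0\leq n\leq M-1\}$ into $\{2n+j\leq J\}$ and $\{2n+j\geq J+1\}$. The second piece contributes terms of order $t^{J+1}$ or higher (the largest such power being $t^{2(M-1)+J}$), which may be absorbed into the $\mathcal{O}(t^{J+1})$ remainder. For the first piece, writing $l=2n+j$, the inner constraint $0\leq n\leq M-1$ is automatically satisfied whenever $j\geq 2$ and $l\leq J\leq 2M+1$, because then $n=(l-j)/2\leq (J-2)/2\leq M-\tfrac{1}{2}$, forcing $n\leq M-1$. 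This lets me drop the $n$-range and rewrite the grouped sum as
$$\sum_{l=2}^{J}t^{l}\!\!\sum_{\substack{2n+j=l\\ j\geq 2}}(-1)^{n+j}C_{n,j}^{(1)}(V),$$
which is exactly the expression appearing in the statement.

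No serious obstacle is expected since the argument is purely combinatorial given Theorem \ref{mainthm}; the only point requiring a moment of care is verifying that the excluded terms with $2n+j>J$ are genuinely absorbed into $\mathcal{O}(t^{J+1})$ uniformly in the relevant range of $M$, and that the implicit $n\leq M-1$ constraint does not truncate any term that ought to be included in the principal part. Both follow from the inequality $J\leq 2M+1$ as above, so the result follows.
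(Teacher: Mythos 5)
Your proof is correct and follows essentially the same route as the paper: apply the main theorem at $\alpha=1$, observe that $J\leq 2M+1$ forces $\Phi_{J+1}^{(1)}(M)=J+1$, and then regroup the double sum by $l=2n+j$, checking that $n\leq M-1$ is automatic from $J\leq 2M+1$. The only cosmetic difference is that the paper invokes Theorem \ref{2.mainthm} (which already has the $\frac{2n}{\alpha}+j<\Phi$ truncation built in), whereas you start from Theorem \ref{mainthm} and perform the truncation and absorption into $\mathcal{O}(t^{J+1})$ yourself; this is the same argument unfolded one step further.
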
  
as $\tgo$.

\begin{proof} 
We apply Theorem \ref{2.mainthm} with $\alpha=1$ and $J+1 \leq 2M+2$,
 so that $\Phi_{J+1}^{(1)}(M)=\min\set{J+1,2M+2}=J+1$. Therefore,we obtain
\begin{align*}
\frac{Tr(e^{-H_Vt}-e^{-H_1t})}{p_t^{(1)}(0)} + t\ioRd V(\theta)d\theta 
- \sum\limits_{\substack{ j+2n\leq J, \\ 2\leq j\leq J,\,\, 0\leq n\leq M-1}}(-1)^{n+j}C_{n,j}^{(1)}(V)
t^{2n+j}= \mathcal{O}(t^{J+1}).
\end{align*}
as $\tgo$.

The following argument shows that under  the conditions given above we are not excluding any $n$ and $j$ such that $2n+j\leq J$.  
We observe that $2n+j=l$ for some $l \in\set{2,...,J}$ if and only if $n=\frac{l-j}{2}$. Then the larger $n$ can be is $\frac{J-2}{2}$. But $J\leq(2M+1)$, which yields
 $\frac{J-2}{2}\leq M-\frac{1}{2}$. Since $n$ is a positive integer, we conclude the larger that $n$ can be is in fact $M-1$. This implies that
 \begin{equation*}
 \sum\limits_{\substack{ j+2n\leq J, \\ 2\leq j\leq J,\,\, 0\leq n\leq M-1}}(-1)^{n+j}C_{n,j}^{(1)}(V)t^{2n+j}=\mysum{l}{2}{J}t^{l}\left(\sum\limits_{\substack {2n+j=l, \\  j\geq 2}}(-1)^{n+j}C_{n,j}^{(1)}(V)\right).
 \end{equation*}
\end{proof}

\begin{example}
When $M=2$ and $d\geq 4$, we have $2\cdot2-d\leq0<1$ . Then, 
Theorem \ref{expansion.alpha1} holds for any $2\leq J \leq 5$. Therefore, for the particular case $J=5$, we obtain
\begin{align}\label{examplealpha1}
\frac{Tr(e^{-H_Vt}-e^{-H_1t})}{p_t^{(1)}(0)} + &t\ioRd V(\theta)d\theta 
 - \frac{t^{2}}{2!}\ioRd V^2(\theta)d\theta \nonumber \\
 +&\frac{t^{3}}{3!}\ioRd V^3(\theta)d\theta 
-\frac{t^{4}}{4!}\left( \ioRd V^4(\theta)d\theta +4!{\mathcal L}_{d,1}\int_{\Rd}|\nabla V(\theta)|^2 d\theta\right) \nonumber \\ 
+&\frac{t^{5}}{5!}\left( \ioRd V^5(\theta)d\theta -5! \mathcal{M}_{d,1}
\int_{\R^d}V(\theta)\abs{\nabla V(\theta)}^2d\theta\right)=\mathcal{O}(t^6),
\end{align}
as $\tgo$.
Notice that part of this expansion is obtained by applying v) of Corollary \ref{maincor} to the specific case $\alpha=1$.
\end{example}

By mimicking the proof for the case $\alpha=1$, we conclude that
\begin{thm}
Let $\alpha = \frac{2}{k}$ with $k\geq3$ a positive integer. Assume also
$2M-d\leq 0$. Then, for any $2\leq J\leq 1+Mk$ we have 
\begin{equation}
\frac{Tr(e^{-H_Vt}-e^{-H_{\frac{2}{k}}t})}{p_t^{(\frac{2}{k})}(0)}+t\int_{\R^d}V(\theta)d\theta
-\mysum{l}{2}{J}t^l\left(\sum\limits_{\substack {kn+j=l,\\j \geq 2}}(-1)^{n+j}C_{n,j}^{(\frac{2}{k})}(V)\right)=\mathcal{O}(t^{J+1}),
\end{equation}
as $\tgo$.
\end{thm}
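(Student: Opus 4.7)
The proof proceeds by a direct application of Theorem \ref{2.mainthm} with $\alpha = 2/k$, followed by a bookkeeping step on the exponents of $t$ that appear. First I would verify that the hypothesis $M<\frac{d+\alpha}{2}$ required by that theorem is automatic under the standing assumption $2M-d\leq 0$, since this immediately gives $M\leq d/2 < (d+\alpha)/2$. With the choice $J\leq 1+Mk$, the threshold exponent reads
\[\Phi_{J+1}^{(2/k)}(M) = \min\left\{J+1,\ 2+\frac{2M}{2/k}\right\} = \min\{J+1,\ 2+Mk\} = J+1.\]

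Substituting $\alpha=2/k$ into the expansion supplied by Theorem \ref{2.mainthm}, and using the key integrality observation $\frac{2n}{\alpha}+j = kn+j \in \mathbb{Z}$ for all $n,j\geq 0$, one gets
\[\frac{Tr(e^{-tH_V}-e^{-tH_{2/k}})}{p_t^{(2/k)}(0)} + t\!\int_{\R^d}\!V(\theta)\,d\theta = \sum_{\substack{kn+j\leq J\\ 2\leq j\leq J,\ 0\leq n\leq M-1}}(-1)^{n+j}C_{n,j}^{(2/k)}(V)\,t^{kn+j} + \mathcal{O}(t^{J+1}),\]
as $\tgo$. The remaining task is to regroup this finite double sum according to the integer value $l = kn+j$.

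For the regrouping to be valid, I need to confirm that the upper cutoff $n\leq M-1$ inherited from Theorem \ref{2.mainthm} never excludes a pair $(n,j)$ with $kn+j\leq J$ and $j\geq 2$, so that the sum may legitimately be rewritten as $\sum_{l=2}^{J} t^{l}\sum_{kn+j=l,\ j\geq 2}(-1)^{n+j}C_{n,j}^{(2/k)}(V)$. Indeed, from $kn+j\leq J$ and $j\geq 2$ one deduces $kn\leq J-2\leq Mk-1$, whence $n\leq M-\tfrac{1}{k}$; since $k\geq 3$ and $n$ is a nonnegative integer, this forces $n\leq M-1$. This yields precisely the claimed identity.

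The only genuinely substantive step, as opposed to a mechanical transcription of the $\alpha=1$ argument, is the integrality check $\tfrac{2n}{\alpha}+j\in\mathbb{Z}$: the values $\alpha = 2/k$, $k\in\mathbb{N}$, are exactly those for which this holds, and they are therefore the unique instances in which the expansion collapses into a clean Taylor-type series in integer powers of $t$. For all other $\alpha\in(0,2)$ the exponents $\tfrac{2n}{\alpha}+j$ are irrational (modulo the integers) and the genuine asymptotic expansion must involve fractional powers of $t$, as made explicit in Theorem \ref{2.mainthm}. This is the main conceptual obstacle to pushing the ``integer-power'' form of the theorem beyond $\alpha\in\{2/k: k\in\mathbb{N}\}$.
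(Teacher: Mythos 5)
Your proof is correct and follows essentially the same route as the paper, which proves the $\alpha=1$ (i.e., $k=2$) case in detail and then states that the $k\geq 3$ case follows by the same argument: apply Theorem \ref{2.mainthm}, observe $\Phi_{J+1}^{(2/k)}(M)=\min\{J+1,\,Mk+2\}=J+1$ under the hypothesis $J\leq 1+Mk$, exploit that $\tfrac{2n}{\alpha}+j=kn+j$ is an integer, and check that the constraint $n\leq M-1$ is never binding since $kn\leq J-2\leq Mk-1$ gives $n\leq M-\tfrac1k<M$. One small slip in your closing side remark: for $\alpha\in(0,2)\setminus\{2/k:k\in\mathbb{N}\}$ the exponents $\tfrac{2n}{\alpha}+j$ need not be \emph{irrational} modulo the integers (e.g.\ $\alpha=4/3$ gives $\tfrac{2}{\alpha}=\tfrac32$); the correct statement, and the one the paper makes, is only that they fail to all be integers, which is what the regrouping into a clean power series in $t$ actually requires.
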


\begin{example} Consider $k=3$, $M=2$ and $d\geq 4$. Then, our main theorem holds for $2\leq J \leq 7$.  The particular case $J=5$ yields
\begin{align}\label{expansioncase2/3}
&\frac{Tr(e^{-H_Vt}-e^{-H_{\frac{2}{3}}t})}{p_t^{(\frac{2}{3})}(0)} + t\ioRd V(\theta)d\theta 
 - \frac{t^{2}}{2!}\ioRd V^2(\theta)d\theta +
\frac{t^{3}}{3!}\ioRd V^3(\theta)d\theta \nonumber\\
&-\frac{t^{4}}{4!}\ioRd V^4(\theta)d\theta 
+\frac{t^{5}}{5!}\left( \ioRd V^5(\theta)d\theta + 5!{\mathcal L}_{d,\frac{2}{3}}\int_{\Rd}|\nabla V(\theta)|^2 d\theta\right) =\mathcal{O}(t^6),
\end{align}
as $\tgo$.
\end{example}
\bigskip

 Let us consider for $J\geq2$ the following $J-1 \times J-1$ matrix which contains all the power of t with the form $\frac{2n}{\alpha}+j$ that may appear in the expansion of the trace, $A_{J}(\alpha)=\left( a_{r,s}\right)$ with $1\leq r\leq J-1$ and
 
 \begin{equation*}
 a_{r,s}=\left\{\begin{array}{cc}
                 r-s+2+\frac{2}{\alpha}(s-1) & \mbox{if $r \leq s$},\\
                  0                          & \mbox{otherwise}.
 \end{array}
 \right.
 \end{equation*}
 
In this matrix, $n=r-s+2$ and $j=s-1$. Observe that $n+j=r+1$. Thus, 
\begin{example} For $\alpha=1$,
\begin{equation*}
A_{6}(\alpha)= 
 \left( \begin{array}{ccccc}
 \bigskip
2 & . & . & .& .  \\
\bigskip
3 & 2+1\cdot\frac{2}{\alpha} & . & .& .  \\
\bigskip
4 & 3+1\cdot\frac{2}{\alpha} & 2+2\cdot \frac{2}{\alpha} & .&.\\
\bigskip
5 & 4+1\cdot\frac{2}{\alpha} & 3+2\cdot \frac{2}{\alpha} & 2+3\cdot \frac{2}{\alpha}&.\\
\bigskip
6 & 5+1\cdot\frac{2}{\alpha} & 4+2\cdot \frac{2}{\alpha} & 3+3\cdot \frac{2}{\alpha}& 2+4\cdot \frac{2}{\alpha} 
\end{array} \right)
=\left( \begin{array}{ccccc}
\bigskip
2 & . & . & .& .\\
\bigskip
3 & 4 & . & .& .  \\
\bigskip
4 & 5 & 6 & .&.\\
\bigskip
5 & 6 & 7& 8 &.\\
\bigskip
6 & 7 &8 & 9 & 10
\end{array} \right)
\end{equation*}
We have set the two matrices together to match  entry by entry. As an example, we conclude that there are two coefficients related to $t^5$. Namely, $C_{0,5}^{(1)}(V)$ and $C_{3,1}^{(1)}(V)$. The reader can verify this conclusion from \ref{examplealpha1}. 

Likewise, for $\alpha=\frac{2}{3}$ we obtain
\begin{equation*}
A_{7}(2/3)= 
 \left( \begin{array}{cccccc}
2 & . & . & .& .& .\\

3 & 5 & . & .& .& .\\

4 & 6 & 8 & .& .& .\\

5 & 7 & 9 & 11 & . &.\\

6 & 8 & 10 & 12 & 14 & . \\

7 & 9 & 11 & 13 & 15 & 17  
\end{array} \right)
\end{equation*}
 We can deduce then that the next two terms in the expansion given in \ref{expansioncase2/3}
 are
 \begin{equation*}
 -t^6\set{(-1)^6 C_{0,6}^{(\frac{2}{3})}(V) + (-1)^{3+1}C_{3,1}^{(\frac{2}{3})}(V)}
 -t^7\set{(-1)^7 C_{0,7}^{(\frac{2}{3})}(V) + (-1)^{4+1}C_{4,1}^{(\frac{2}{3})}(V)}.
 \end{equation*}
\end{example}

We point out that for any $0<\alpha<2$, we always have $2<3<2+\frac{2}{\alpha}$ which implies that the influence of the $\alpha$ in the expansion of the trace is expected to be seen in some place after the term  $C_{0,3}^{(\alpha)}(V)t^3$.

Notice that for every $J\geq 2$ we have
\begin{equation}
A_{J}(2)= 
 \left( \begin{array}{cccccc}
2 & . & . & .& .\\
 
3 & 3 & . & .& .  \\

4 & 4 & 4 & .&.\\

5 & 5 & 5& 5 &.\\
 
. & . &. &. & & .\\
J   &  J   &  J  &  J   &  ...  & J 
\end{array} \right)
\end{equation}

which says, for example, that in the expansion \eqref{Ba.Sab trace} there are three coefficients associated with $t^4$. Namely, $C_{0,4}^{(2)}(V),C_{1,3}^{(2)}(V)$ and $C_{2,2}^{(2)}(V)$. 
\begin{thm} 
Assume $J\geq 4$ and  $2(J-2)-d\leq 1$. Then, for all $\alpha\in (\frac{2(J-3)}{J-2},2)$ we have

\begin{align*}
\frac{\trace}{\palp(0)}+t\int_{\R^d} V(\theta)d\theta 
- \mysum{l}{2}{J-1}\sum\limits_{\substack {n+j=l,\\ j\geq2}}(-1)^{n+j}C_{n,j}^{(\alpha)}(V)
t^{\frac{2n}{\alpha}+j}= \mathcal{O}(t^{J}),
\end{align*}
as $\tgo$.
\end{thm}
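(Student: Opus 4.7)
The plan is to derive the statement directly from Theorem \ref{2.mainthm}, applied with parameters $J' = J - 1$ in place of that theorem's $J$, and $M = J - 2$. First I would verify that the hypotheses are met. Since $J \geq 4$, we have $J' \geq 3 \geq 2$ and $M = J - 2 \geq 2 \geq 1$. The remaining condition $M < (d+\alpha)/2$, i.e.\ $d + \alpha > 2(J-2)$, is where the two hypotheses combine: $2(J-2) - d \leq 1$ gives $d \geq 2J - 5$, so the condition reduces to $\alpha > 2J - 4 - d$, which in the extremal case $d = 2J-5$ asks for $\alpha > 1$; and the hypothesis $\alpha > 2(J-3)/(J-2) = 2 - 2/(J-2) \geq 1$ (with the strict inequality, and equality to $1$ only at $J = 4$) supplies exactly what is needed in every case.

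Next I would compute $\Phi_J^{(\alpha)}(J-2) = \min\{J,\, 2 + 2(J-2)/\alpha\}$. Because $\alpha < 2$ we have $2(J-2)/\alpha > J - 2$, whence $\Phi_J^{(\alpha)}(J-2) = J$. Theorem \ref{2.mainthm} then yields
\begin{equation*}
\frac{\trace}{\palp(0)} + t\ioRd V(\theta)\,d\theta - \sum_{(n,j)\,\in\, A} (-1)^{n+j}\, C_{n,j}^{(\alpha)}(V)\, t^{\frac{2n}{\alpha}+j} = \mathcal{O}(t^J),
\end{equation*}
where $A = \{(n,j):\, 2 \leq j \leq J-1,\ 0 \leq n \leq J-3,\ \tfrac{2n}{\alpha} + j < J\}$.

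The only remaining step is to identify $A$ with $B := \{(n,j):\, n \geq 0,\ j \geq 2,\ n+j \leq J - 1\}$, so that reorganizing by the level sets $n + j = l$ for $l = 2, \ldots, J-1$ recovers the form in the statement. For the inclusion $B \subseteq A$, any $(n,j) \in B$ satisfies $n \leq J-1-j$; since the ratio $2(J-1-j)/(J-j) = 2 - 2/(J-j)$ is maximized over $j \in \{2, \ldots, J-1\}$ at $j = 2$, with maximum $2(J-3)/(J-2)$, the hypothesis $\alpha > 2(J-3)/(J-2)$ gives $2n/\alpha \leq 2(J-1-j)/\alpha < J - j$, so $(n,j) \in A$. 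For the reverse inclusion $A \subseteq B$, $\alpha < 2$ combined with $2n/\alpha + j < J$ forces $n < (J-j)\alpha/2 < J - j$, hence $n + j \leq J - 1$ in integers. The only genuine subtlety here is this matching of index sets, and the threshold $2(J-3)/(J-2)$ appearing in the hypothesis is sharp: it is forced precisely by the extremal boundary pair $(n,j) = (J-3,\, 2)$, which carries the largest exponent $2n/\alpha + j$ in the sum. Beyond this, the argument is bookkeeping on top of Theorem \ref{2.mainthm}.
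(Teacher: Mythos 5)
Your proof is correct and follows essentially the same route as the paper: apply Theorem \ref{mainthm} (or its reformulation Theorem \ref{2.mainthm}) with $M = J-2$, observe that $\alpha < 2$ forces the $\Phi$-exponent to equal (or exceed) $J$, and show that the threshold $\alpha > 2(J-3)/(J-2)$ makes the set $\{(n,j): \frac{2n}{\alpha}+j < J\}$ coincide with $\{(n,j): n+j \leq J-1\}$. The only cosmetic difference is that you invoke Theorem \ref{2.mainthm} with parameter $J-1$ so that $\Phi_J^{(\alpha)}(J-2) = J$ exactly, whereas the paper invokes Theorem \ref{mainthm} with parameter $J$ (obtaining $\Phi_{J+1}^{(\alpha)}(J-2) > J$ and absorbing the extra $j=J$ terms into the $\mathcal{O}(t^J)$ error) and phrases the index-set matching in terms of the matrix $A_J(\alpha)$; the substance is the same.
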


\begin{proof}
In Theorem \ref{mainthm} we take $J=M+2\geq 4$ so that 
$$\Phi_{J+1}^{(\alpha)}(M)=\Phi_{J+1}^{(\alpha)}(J-2)= \min\set{J+1,2+\frac{2(J-2)}{\alpha}}>J$$
 since $\frac{2}{\alpha}>1$.
Now we want to choose $\alpha$ such that $a_{r,r}\leq a_{r+1,1}$ for $r\in\set{2,...,J-2}$. This last condition is equivalent to $\frac{2(J-3)}{J-2}<\alpha$ and implies that all the entries of $A_{J}(\alpha)$ are increasing for these $\alpha$`s. In other words, we have the following arrangement
\begin{align}\label{arrangement}
& \hspace*{4mm} 2\nonumber \\
&\leq 3\leq 2+\frac{2}{\alpha}\\
&\leq4\leq 3+\frac{2}{\alpha}\leq 2+2\cdot \frac{2}{\alpha} \nonumber\\
&.... \nonumber \\
&\leq J-1 \leq (J-2) +  \frac{2}{\alpha}\leq ...\leq 2+(J-3)\cdot \frac{2}{\alpha} <J\nonumber.
\end{align}
Then by  Theorem \ref{mainthm},
\begin{align}\label{exp.alphanear2}
\frac{\trace}{\palp(0)} + t\ioRd V(\theta)d\theta 
- \sum\limits_{\substack{ j+\frac{2n}{\alpha}< J \\ 2 \leq j \leq J-1,\,\,0\leq n \leq J-3}}(-1)^{n+j}C_{n,j}^{(1)}(V)
t^{\frac{2n}{\alpha}+j}= \nonumber \\
(-t)^{\Phi_{J+1}(J-2)}R_{J+1}(t)+\sum\limits_{ \substack {j+\frac{2n}{\alpha}\geq J \\ 2\leq j\leq J-1 ,\,\,0\leq n \leq J-3}}(-1)^{n+j}C_{n,j}^{(1)}(V)t^{\frac{2n}{\alpha}+j}.
\end{align}
We observe that the right hand side of \eqref{exp.alphanear2} is $\mathcal{O}(t^J)$ as $\tgo$, due to $\Phi_{J+1}(J-2)>J$.
On the other hand, it is easy to see by the definition of $A_{J}(\alpha)$ that the only powers of t satisfying $\frac{2n}{\alpha}+j<J$ are those in the arrangement given in \eqref{arrangement}. Therefore, due to this arrangement we can also rewrite the third term  in the left hand side of \eqref{exp.alphanear2} as follows
\begin{equation}
\sum\limits_{\substack{ j+\frac{2n}{\alpha}< J \\ 2\leq j\leq J-1 ,\,\,0\leq n \leq J-3}}(-1)^{n+j}C_{n,j}^{(\alpha)}(V)
t^{\frac{2n}{\alpha}+j}= \mysum{l}{2}{J-1}\sum\limits_{\substack {n+j=l,\\ j\geq2}}(-1)^{n+j}C_{n,j}^{(\alpha)}(V)
t^{\frac{2n}{\alpha}+j}.
\end{equation}
\end{proof}

\begin{example}
We take $J=4$ and $d\geq3$ so that $2\cdot2 -d\leq 1$. Then, according to last theorem for all $\alpha\in(1,2)$ we have
\begin{align*}
\frac{\trace}{\palp(0)} +&t\int_{\R^d}V(\theta)d\theta
-\frac{t^2}{2!}\int_{\R^d}V^2(\theta)d\theta \nonumber \\
+&\frac{t^3}{3!}\int_{\R^d}V^3(\theta)d\theta +{\mathcal L}_{d,\alpha}t^{2+\frac{2}{\alpha}}\int_{\R^d}\abs{\nabla V(\theta)}^2= \mathcal{O}(t^4),
\end{align*}
as $\tgo$. Notice that this result is already given in Corollary \ref{maincor}.

Let us now consider $J=5$ and $d\geq 5$. Then, for all $\alpha\in(\frac{4}{3},2)$ we obtain

\begin{align*}
\frac{\trace}{\palp(0)} + t\int_{\R^d}V(\theta)d\theta
-\frac{t^2}{2!}\int_{\R^d}V^2(\theta)d\theta
+\left(\frac{t^3}{3!}\int_{\R^d}V^3(\theta)d\theta +{\mathcal L}_{d,\alpha}t^{2+\frac{2}{\alpha}}\int_{\R^d}\abs{\nabla V(\theta)}^2\right) \nonumber \\
-\left(\frac{t^4}{4!}\int_{\R^d}V^4(\theta)d\theta + \mathcal{M}_{d,\alpha}
t^{3+\frac{2}{\alpha}}\int_{\R^d}V(\theta)\abs{\nabla V(\theta)}^2d\theta +
\mathcal{N}_{d,\alpha}t^{2+\frac{2\cdot 2}{\alpha}}
\int_{\Rd}|\Delta V(\theta)|^2\,d\theta
\right)= \mathcal{O}(t^5),
\end{align*}
as $\tgo$.
\end{example}

\section{Extension to $\alpha$--relativistic processes.}\label{sec:rel.proc}
 In this section we describe how to use  the tools developed in the preceding sections to obtain an asymptotic expansion for relativistic stable processes. 

Let $0<\alpha<2$ and $m\geq0$, we consider the function  $\phi_m(\lambda)= \set{ \lambda+m^{2/\alpha} }^{\alpha/2}-m$. As the case $m=0$ was already studied above, we assume for the rest of the paper that $m>0$. The function $\phi_m$ is known to be a Bernstein--function, that is $(-1)^n\phi_m^{(n)}(\lambda)\leq0$ for every $n\in \mathbb{N}$ and $\lambda>0$. Therefore, there exists a unique subordinator $\set{S_{t,m}}_{t\geq0}$ (see \cite[pp 89]{Bog})
 such that its Laplace transform is given by
\begin{equation*}\label{Laplace.relativistic}
\texpt{e^{-\lambda S_{t,m}}}= e^{-t\phi_m(\lambda)}.
\end{equation*}  

It is easy to see that the transition density $\eta_t^m(s)$ of $S_{t,m}$ satisfies the scaling 
\begin{equation*}
\eta_t^m(s)= e^{mt}\eta_t^{(\alpha/2)}(s)e^{-m^{2/\alpha}s},
\end{equation*}
which implies that for every $-\infty<\eta<\alpha/2$,
\begin{eqnarray}\label{rel.expt}
0<\texpt{S_{t,m}^{\eta}}&=& e^{mt}\texpt{S_{t,\frac{\alpha}{2}}^{\eta}e^{-m^{2/\alpha}S_{t,\frac{\alpha}{2}}}}\\
&\leq& e^{mt}t^{\frac{2\eta}{\alpha}}\texpt{S_{1,\alpha/2}^{\eta}}<\infty,\nonumber
\end{eqnarray}
where $S_{t,\frac{\alpha}{2}}$ is an $\alpha/2$--subordinator. 

The $\alpha$--stable relativistic process in $\R^d$ is defined as the subordinated Brownian motion process $Z_t^m=B_{2S_{t,m}}$, whose  characteristic function is given by
\begin{equation}
\texpt{e^{-i<\xi,Z_t^m>}}= e^{-t\phi_m(\abs{\xi}^2)}.
\end{equation}

If $p_t^{(\alpha,m)}(x)$ denote the transition density of $Z_t^m$ then exactly as in the preceding sections we have
\begin{equation*}
 p_t^{(\alpha,m)}(x)=\texpt{p_{S_{t,m}}^{(2)}(x)}.
 \end{equation*}
  Consequently, because of \eqref{rel.expt}, it follows that
\begin{eqnarray}\label{Rel.density.equation}
t^{d/\alpha}p_t^{(\alpha,m)}(0)&=& (4\pi)^{-d/2}\texpt{ S_{1,tm}^{-d/2} }\\
&=&(4\pi)^{-d/2}e^{mt}\texpt{S_{1,\frac{\alpha}{2}}^{-d/2}e^{-(tm)^{2/\alpha}S_{1,\frac{\alpha}{2}}}},\nonumber
\end{eqnarray}
which implies by monotone convergence theorem that
\begin{eqnarray*}\label{limit.rel}
\lim\limits_{\tgo}t^{d/\alpha}p_t^{(\alpha,m)}(0)e^{-mt}=p_1^{(\alpha)}(0),
\end{eqnarray*} and for all $0<t<1$,
\begin{equation}\label{ineq.rel.int}
t^{d/\alpha}p_t^{(\alpha,m)}(0)\geq(4\pi)^{-d/2}e^{-m}\texpt{S_{1,m}^{-d/2}}.
\end{equation}

We point out that in order to obtain the constants in the asymptotic expansion in Theorem \eqref{mainthm}, the self-similarity of the subordinator $S_{ t, \frac{\alpha}{2}}$, namely that 
 $t^{2/\alpha}S_{c,\frac{\alpha}{2}}\eid S_{ct,\frac{\alpha}{2}}$ and the scaling property of $\palp(x)$, were strongly used.  But, these two properties are not satisfied by $S_{t,m}$ and $p_t^{(\alpha,m)}(x)$. Indeed, we have
 \begin{equation*}
t^{2/\alpha}S_{c,tm}\eid S_{ct,m},\,\,\,\textrm{and} \,\,\,p_{t}^{(\alpha,m)}(x)
 =m^{d/\alpha}
p_{mt}^{(\alpha,1)}
 (m^{1/\alpha}x).
 \end{equation*} 

Despite of this, we can still obtain a version of Theorem \ref{mainthm} where the coefficients $C_{n,j}^{(\alpha)}(V)$ are replaced by time dependent functions. 

We now define the following linear operators  $H_{\alpha,m}= \phi_m(-\Delta)$ and $H_{V}= H_{\alpha,m}+V$. In \cite{Hiro} it is  proved that for any Bernstein function  the Feymann Kac formula holds. As a result, the heat kernel of $H_V$ can also be written as
\begin{equation*}
p_t^{H_V}(x,y)= p_t^{(\alpha,m)}(x,y)E_{x,y}^t\left[e^{-\int_{0}^{t}V(Z_s^m)ds}\right].
\end{equation*}

 We now proceed to mention which of the results in the above sections still hold true for these operators.  It is easy to check that all the results in section $\S\ref{sec:trace&Fourier}$ and $\S\ref{sec:boundedcoeff.rem}
$ are still true when we replace $\abs{\xi}^{\alpha}$ by $\phi_m(\abs{\xi}^2)$, $p_t^{(\alpha)}$ by $p_t^{(\alpha,m)}$ and using the fact  that $\wh{\phi_m(-\Delta)f}(\xi)=\phi_m(\abs{\xi}^2)\wh{f}(\xi)$. Furthermore, by replacing the random variables $S_{\lambda}$ by $S_{\lambda,tm}$ and $S^*_{\lambda}$ by $S^*_{\lambda,tm}$, we see that all results in  \S\ref{sec:trace-sub} also remain true with the following modifications. 
Set, 
\begin{equation*}
F_j^{(m)}(t\lambda,\xi,\theta)=e^{-t(1-\lambda_1)\phi_{m}(|\xi|^{2})-t\mysum{k}{1}{j-1}(
\lambda_k - 
\lambda_{k+1}) \phi_m(|\xi - \mysum{i}{1}{k}\theta_i|^{2})},
\end{equation*}
and 
\begin{equation*}
L_j(tm,\lambda,\theta)= \mysum{k}{1}{j-1}\spS{\incmt{k}{k+1},tm}|\gamma_k|^2 -\frac{1}{S_{1,tm}}\left |\mysum{k}{1}{j-1}\spS{\incmt{k}{k+1},tm}\gamma_k\right|^2,
\end{equation*}
where $\lambda=(\lambda_1,...,\lambda_j)$ satisfies \eqref{lamb.ineq} and $\theta=(\theta_1,..., \theta_{j-1})$. Then
\begin{equation*}
\int_{\R^d} F_j^{(m)}(t\lambda,\xi,\theta)e^{-t\lambda_j\phi_m(\abs{\xi}^2)}d\xi= \pi^{d/2}
t^{-d/\alpha} 
\texpt{S_{1,tm}^{-d/2}e^{-t^{2/\alpha}L_j(tm,\lambda,\theta)}}.
\end{equation*} We recall that we cannot express the right hand side of the last equality in terms of $p_{t}^{(\alpha,m)}(0)$ because this density fails to satisfy the scaling  property. 
We now state the analogue of Proposition \eqref{Fourier-trace-formula} for the new linear operators defined above.

\begin{prop}
Given $J\geq2$, we have
\begin{align*}
&\frac{Tr{e^{-tH_V}-e^{-tH_{\alpha,m}}}}{p_t^{(\alpha,m)}(0)}=
-t\ioRd V(\theta)d\theta \nonumber \\
&+ \mysum{j}{2}{J}
\frac{\pi^{d/2}(-t)^{j}}{(2\pi)^{jd}t^{d/\alpha}p_t^{(\alpha,m)}(0)}\lgint{j-1}{(j-1)}
\texpt{S_{1,tm}^{-d/2}e^{-t^{2/\alpha}L_j(tm,\lambda,\theta)}}\\
&\times\opV d\theta_i d\lambda_i d\lambda_j + (-t)^{J+1}R_{J+1}(t), 
\end{align*}
where $R_{J+1}(t)$ is a bounded function  on $(0,1)$.
\end{prop}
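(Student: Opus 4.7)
The plan is to mirror the derivation in Sections \S\ref{sec:trace&Fourier}--\S\ref{sec:trace-sub} essentially line by line, making only the substitutions $|\xi|^\alpha\mapsto \phi_m(|\xi|^2)$, $\palp\mapsto p_t^{(\alpha,m)}$, $\Fjalp{j}{s}\mapsto F_j^{(m)}(s,\xi,\theta)$, and $\spS{\cdot}\mapsto \spS{\cdot,tm}$, and then addressing the one genuinely new issue: the relativistic density no longer satisfies the self-similarity scaling that in \S\ref{sec:trace-sub} allowed us to extract a clean factor of $\palp(0)$.

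First I would repeat the Duhamel-type argument of Proposition~\ref{Fourier-trace-formula} and its corollary, using that $\wh{\phi_m(-\Delta)f}(\xi)=\phi_m(|\xi|^2)\wh{f}(\xi)$ and that $\wh{p_t^{(\alpha,m)}}(\xi,\eta)=(2\pi)^d\delta(\xi+\eta)e^{-t\phi_m(|\xi|^2)}$; none of those manipulations relied on the specific form $|\xi|^\alpha$. Iterating $J$ times produces an expression for $Tr(e^{-tH_V}-e^{-tH_{\alpha,m}})$ consisting of the first--order term $-t\,p_t^{(\alpha,m)}(0)\wh{V}(0)$, a sum of $J-1$ intermediate terms built from $\int_{\R^d} F_j^{(m)}(t\lambda,\xi,\theta)e^{-t\lambda_j\phi_m(|\xi|^2)}d\xi$, and a final Fourier--space remainder $r_{J+1}(t)$ containing $\wh{p_s^{H_V}}$.

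Next I would redo the key $\xi$--integral of \S\ref{sec:trace-sub} with $S_{\cdot,tm}$ in place of $S_{\cdot,\alpha/2}$. The identity $e^{-tc\phi_m(|\xi|^2)}=\texpt{e^{-S_{c,tm}|\xi|^2}}$ together with the independent sum decomposition $S_{1,tm}=\spS{1-(\incmt{1}{j}),tm}+\sum_k\spS{\incmt{k}{k+1},tm}$ allows exactly the same completing--the--square calculation to give
\[
\int_{\R^d}F_j^{(m)}(t\lambda,\xi,\theta)e^{-t\lambda_j\phi_m(|\xi|^2)}d\xi=\pi^{d/2}t^{-d/\alpha}\texpt{S_{1,tm}^{-d/2}e^{-t^{2/\alpha}L_j(tm,\lambda,\theta)}},
\]
which is the expectation appearing in the stated formula. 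Dividing the whole expression by $p_t^{(\alpha,m)}(0)$ and absorbing $\pi^{d/2}t^{-d/\alpha}$ into the prefactor then yields the stated identity.

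The main obstacle is controlling the $(J+1)$--th term. The argument of Proposition~\ref{last.remainder} extends almost verbatim, since Chapman--Kolmogorov is preserved by $p_t^{(\alpha,m)}$ and Feynman--Kac for Bernstein functions gives $p_t^{H_V}(x,y)\le e^{t\|V\|_{L^\infty(\R^d)}}p_t^{(\alpha,m)}(x,y)$. This yields $|r_{J+1}(t)|\le C\,p_t^{(\alpha,m)}(0)$ for $0<t<1$, so that $R_{J+1}(t)=r_{J+1}(t)/((2\pi)^{(J+2)d}(-t)^{J+1}p_t^{(\alpha,m)}(0))$ is bounded. Unlike the $m=0$ case one cannot further simplify $p_t^{(\alpha,m)}(0)$ via scaling, which is precisely why the statement keeps the explicit time--dependent prefactor $\pi^{d/2}/(t^{d/\alpha}p_t^{(\alpha,m)}(0))$ and why the resulting ``coefficients'' are themselves $t$--dependent; the uniform lower bound \eqref{ineq.rel.int} guarantees that all such ratios remain bounded on $(0,1)$, completing the proof.
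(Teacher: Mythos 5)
Your proposal is correct and takes essentially the same route as the paper: replay the Duhamel iteration of \S\ref{sec:trace&Fourier}, the Chapman--Kolmogorov/Feynman--Kac bound of \S\ref{sec:boundedcoeff.rem}, and the completing-the-square $\xi$-integral of \S\ref{sec:trace-sub}, with $|\xi|^\alpha\mapsto \phi_m(|\xi|^2)$, $S_{\cdot,\alpha/2}\mapsto S_{\cdot,tm}$, and no scaling simplification of $p_t^{(\alpha,m)}(0)$. Two small slips, which do not undermine the argument but should be fixed: the subordinator identity should be $e^{-tc\phi_m(|\xi|^2)}=\texpt{e^{-t^{2/\alpha}S_{c,tm}|\xi|^2}}$ (using $t^{2/\alpha}S_{c,tm}\eid S_{ct,m}$), since without the $t^{2/\alpha}$ in the exponent neither the $t^{-d/\alpha}$ prefactor nor the $t^{2/\alpha}$ inside the expectation can arise; and $R_{J+1}(t)$ should be $r_{J+1}(t)/\left((2\pi)^{(J+2)d}p_t^{(\alpha,m)}(0)\right)$ without the extra $(-t)^{J+1}$ in the denominator, because the factor $(-t)^{J+1}$ is already written separately in the statement and including it in the denominator would contradict your own bound $|r_{J+1}(t)|\le C\,p_t^{(\alpha,m)}(0)$ and make $R_{J+1}$ blow up as $t\downarrow 0$.
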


From this lemma, it is clear that with the Taylor-expansion of the exponential function we obtain time depending functions instead of coefficients as in \S\ref{sec:coeffic.sect}.
Now, it is easy to see by using the expansion \eqref{Tay.exp} of order 2 that when
$1<\frac{\alpha+d}{2}$ the following equality holds
\begin{equation*}
\frac{\texpt{S_{1,tm}^{-d/2}e^{-t^{2/\alpha}L_j(tm,\lambda,\theta)}}}{t^{d/\alpha}p_t^{(\alpha,m)}(0)}=(4\pi)^{d/2} - t^{2/\alpha}R_{1,j}(t),
\end{equation*}
where 
\begin{equation*}
R_{1,j}(t)= \frac{\texpt{S_{1,tm}^{-d/2}L_j(tm,\lambda,\theta)e^{-\beta_1}}}{t^{d/\alpha}p_t^{(\alpha,m)}(0)}, 
\end{equation*} 
for some nonnegative random function $\beta_1= \beta_1(tm,\lambda,\theta)$ and this satisfies, according to \eqref{ineq.rel.int} and (\ref{rel.expt}), the estimate 
\begin{equation*}
\abs{R_{1,j}(t)}\leq \frac{(4\pi)^{d/2}e^{2m}\texpt{S_{1,\frac{\alpha}{2}}^{-d/2+1}}}{\texpt{S_{1,m}^{-d/2}}}\mysum{k}{1}{j-1}\abs{\gamma_k}^2,
\end{equation*}  for all $0<t<1$. 
With this last estimate, by mimicking the proof of Theorem \eqref{mainthm} ,we have that

\begin{align}\label{trace.rel}
\frac{Tr(e^{-H_Vt}-e^{-H_{m}t} )}{p_t^{(\alpha,m)}(0)}+t\int_{\R^d} V(\theta)d\theta - \frac{t^{2}}{2!} 
\int_{\R^d} V^2(\theta)d\theta 
+\frac{t^{3}}{3!}\int_{\R^d} V^3(\theta)d\theta=
t^{\Phi_4^{(\alpha)}(1)}R^{(m)}_{4}(t),
\end{align} where
$\Phi_4^{(\alpha)}(1)=\min\set{4,2+\frac{2}{\alpha}}$, and $R_{4}^{(m)}(t)$ is a bounded function of $t$ over $(0,1)$. 
We can also redo the proof of Lemma \ref{smallcase} to make sure that \eqref{trace.rel} remains true when $d=1$, $M=1$, and $\frac{1}{2}< \alpha< 2$. For this, we only require the following inequality when $0<t<1$, that follows again from (\ref{rel.expt}),
\begin{eqnarray*}
\texpt{S_{l,tm}^{M/2-d/4}}&=&l^{M/2-d/4}\texpt{S_{1,ltm}^{M/2-d/4}}\\
&\leq&
l^{M/2-d/4}e^{lm}\texpt{S_{1,\frac{\alpha}{2}}^{M/2-d/4}}.
\end{eqnarray*}
From this we conclude 

\begin{cor} 
\begin{align*} 
\frac{Tr(e^{-H_Vt}-e^{-tH_{\alpha,m}} )}{p_t^{(\alpha,m)}(0)}+t\int_{\R^d} V(\theta)d\theta - \frac{t^{2}}{2!}\int_{\R^d} V^2(\theta)d\theta
+ \frac{t^{3}}{3!}\int_{\R^d} V^3(\theta)d\theta= 
\nonumber \\
\left\{\begin{array}{ccc}
       {\mathcal O}( t^{2 + \frac{2}{\alpha}}), &  \mbox{ if $\alpha\in(1,2)$ and $d\geq1$} , \nonumber\\
       {\mathcal O}(t^4), & \mbox{ if $\alpha\in(\frac{1}{2},1]$ and $d\geq1$}, \nonumber \\
       {\mathcal O}(t^4), & \mbox{ if $\alpha\in(0,\frac{1}{2}]$ and $d\geq2$},
       \end{array}   
\right. 
\end{align*} 
as $\tgo$.
\end{cor}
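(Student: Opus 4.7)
The strategy is to reduce each case of the corollary to the formula \eqref{trace.rel} already established in \S\ref{sec:rel.proc}, and to extend that formula to the remaining regime $d=1$, $\alpha\in(1/2,1]$ by adapting the refined pairwise estimates of Lemma \ref{smallcase}(i) to the relativistic subordinator $S_{l,tm}$.

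Recall that \eqref{trace.rel} provides the desired expansion with remainder $t^{\Phi_4^{(\alpha)}(1)}R_4^{(m)}(t)$, where $R_4^{(m)}$ is bounded on $(0,1)$ and $\Phi_4^{(\alpha)}(1)=\min\{4,2+2/\alpha\}$; its derivation rests on the order-$1$ Taylor expansion hypothesis $1<(d+\alpha)/2$. This hypothesis is automatic in three of the four sub-cases to be handled: (a) $\alpha\in(1,2)$ with any $d\geq 1$; (b) $\alpha\in(1/2,1]$ with $d\geq 2$; (c) $\alpha\in(0,1/2]$ with $d\geq 2$. Reading off $\Phi_4^{(\alpha)}(1)$ then gives $\mathcal O(t^{2+2/\alpha})$ in case (a) (where $2+2/\alpha<4$) and $\mathcal O(t^4)$ in cases (b) and (c) (where $2+2/\alpha\geq 4$), matching the three branches of the corollary.

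The only case left to settle is $d=1$, $\alpha\in(1/2,1]$. Here I will rerun the argument of Lemma \ref{smallcase}(i) in the relativistic setting: replace the crude bound $L_j(tm,\lambda,\theta)\leq S_{1,tm}\sum_k|\gamma_k|^2$ by the finer pairwise estimate of Lemma \ref{finer.inequality.Lem}, so that the relevant terms take the form
\[
\texpt{\frac{(\spS{l_0,tm}\spS{l_1,tm})^M}{(\spS{l_0,tm}+\spS{l_1,tm})^{M+d/2}}}.
\]
The key input is the relativistic moment bound displayed just before the corollary,
\[
\texpt{S_{l,tm}^{\eta}} \leq l^{\eta} e^{lm}\texpt{S_{1,\alpha/2}^{\eta}},\qquad \eta<\alpha/2,\ 0<t<1,\ 0<l<1.
\]
Taking $\eta=M/2-d/4=1/4$ (which satisfies $\eta<\alpha/2$ precisely because $\alpha>1/2$) and applying the Cauchy--Schwarz step of Lemma \ref{finer.inequality.Lem} dominates the above expectation, uniformly in $t\in(0,1)$, by a constant multiple of $(l_0 l_1)^{1/4}\bigl(\texpt{S_{1,\alpha/2}^{1/4}}\bigr)^2$. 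Since $1/4>0$, the resulting $\lambda$-integral over the simplex $I_j$ is finite, so the relativistic analogue of $R_{j,1}^{(1)}(t)$ is bounded on $(0,1)$; this extends \eqref{trace.rel} to $d=1$, $\alpha\in(1/2,1]$, where $\Phi_4^{(\alpha)}(1)=4$ delivers the claimed $\mathcal O(t^4)$ rate.

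The main obstacle is uniformity in $t$: the relativistic subordinator $S_{l,tm}$ has a second parameter that itself varies with $t$, so stable moment estimates do not apply verbatim. The displayed inequality above circumvents this by absorbing all $t$-dependence into the harmless factor $e^{lm}\leq e^{m}$, so the proof of Lemma \ref{smallcase}(i) transfers with only bookkeeping changes---no genuinely new estimate is needed beyond this uniform comparison with the stable case.
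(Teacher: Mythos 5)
Your proposal is correct and follows the same route as the paper: you first apply \eqref{trace.rel} directly whenever the Taylor hypothesis $1<(d+\alpha)/2$ holds (which covers $\alpha\in(1,2),\,d\geq1$ and all of $d\geq2$), and for the remaining regime $d=1$, $\alpha\in(1/2,1]$ you invoke the relativistic moment bound $\texpt{S_{l,tm}^{M/2-d/4}}\leq l^{M/2-d/4}e^{lm}\texpt{S_{1,\alpha/2}^{M/2-d/4}}$ to rerun the pairwise Cauchy--Schwarz argument of Lemma \ref{smallcase}(i), exactly as the paper indicates just before the corollary. This is the paper's argument, and your reading of $\Phi_4^{(\alpha)}(1)=\min\{4,2+2/\alpha\}$ across the three branches is correct.
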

This extends the result in Ba\~{n}uelos and Yildirim \cite{Ba.Sel} where the second term is computed.

\section{Extension to mixed--stable processes.}\label{sec:mixed.proc}
In this section we assume that  $0<\alpha<\beta<2$, $a>0$ and only extend Theorem \ref{mainthm} for the case $1<\frac{\alpha+d}{2}$.  In addition, we analyse the cases $d=1$,
$1<\alpha<2$ and $d=2$, $0<\alpha <\beta<2$.
We point out that the  constants  $C_{a,d,\alpha,\beta}$  below may change from line to line.
Consider the Bernstein function $\phi_a(\lambda)= \lambda^{\frac{\alpha}{2}}+a\lambda^{\frac{\beta}{2}}$ and let $\set{S_{t,a}}_{t>0}$ be the subordinator associated to $\phi_a$, with Laplace transform given by
\begin{equation*}
\texpt{e^{-\lambda S_{t,a}}}=e^{-t\phi_a(\lambda)}.
\end{equation*}

It is a standard fact that the above subordinator can be written as 
\begin{align}\label{sub.ineq.mixed stable}
S_{t,a}=S_{t,\frac{\alpha}{2}}+a^{\frac{2}{\beta}}S_{t,\frac{\beta}{2}},
\end{align} 
where the subordinators in right hand side of the last equality are independent processes. 

Notice that, by independence, we have
\begin{equation}\label{sub.mixed.eid}
S_{t,a}\eid t^{\frac{2}{\alpha}}S_{1,\frac{\alpha}{2}}+ (at)^{\frac{2}{\beta}}S_{1,\frac{\beta}{2}},
\end{equation} and by \eqref{sub.ineq.mixed stable}, it also follows that
\begin{equation*}
(1+a^{\frac{2}{\beta}})\min\set{ S_{t,\frac{\alpha}{2}},S_{t,\frac{\beta}{2}}}\leq S_{t,a}\leq(1+a^{\frac{2}{\beta}})
\max\set{ S_{t,\frac{\alpha}{2}},S_{t,\frac{\beta}{2}}},
\end{equation*}
which implies for any $\eta<\frac{\alpha}{2}$, 
\begin{align}\label{ineq.exp.mixed.stable}
0<\texpt{S_{t,a}^{\eta}}\leq &2(1+a^{\frac{2}{\beta}})^{\eta}\max\left\{ \texpt{S_{t,\frac{\alpha}{2}}^{\eta}},
\texpt{S_{t,\frac{\beta}{2}}^{\eta}}\right\} \nonumber \\
 \leq &2(1+a^{\frac{2}{\beta}})^{\eta}\max\left\{t^{\frac{2\eta}{\alpha}} \texpt{S_{1,\frac{\alpha}{2}}^{\eta}},
t^{\frac{2\eta}{\beta}}\texpt{S_{1,\frac{\beta}{2}}^{\eta}}\right\}
<\infty.
\end{align}

The $d$--dimensional process $Y_t^a=B_{2S_{t,a}}$ is known as the mixed $(\alpha,\beta)$--stable process, with characteristic function given by
$$\texpt{e^{-i<\xi,Y_t^a>}}= e^{-t\phi_a(\abs{\xi}^2)}.$$
 Properties for the heat kernels and Green functions for these processes have been studied by several people, see for example Chen and Kumagai \cite{CheKum}, Chen, Kim and Song \cite{CheKimSon} or Jakubowski and Szczypkowski \cite{JakSzc}. Using some of these results, Ba\~{n}uelos and Yildirim \cite{Ba.Sel} were able to compute the second coefficient.  In this action we show how to go further than the results in \cite{Ba.Sel} using the techniques of this paper.  

Denote the transition density of $Y_t^a$ by  $p_t^{(a)}(x)$, then as before we have
$$p_t^{(a)}(x)=\texpt{p_{S_{t,a}}^{(2)}(x)}.$$
Therefore, by \eqref{sub.mixed.eid}, the following holds
\begin{eqnarray*}
p_t^{(a)}(0)&=&(4\pi)^{-d/2}\texpt{S_{t,a}^{-d/2}}\\
&=&
(4\pi)^{-d/2}\texpt{\left( t^{\frac{2}{\alpha}}S_{1,\frac{\alpha}{2}}+ (at)^{\frac{2}{\beta}}S_{1,\frac{\beta}{2}}\right)^{-d/2}}.
\end{eqnarray*}
It is also proved in  \cite{CheKimSon}, that there exists a constant $C_{d,\alpha,\beta}$ such that
$$C_{d,\alpha,\beta}^{-1}f_t^{a}(x)\leq p_t^{(a)}(x) \leq C_{d,\alpha,\beta}f_t^{a}(x)$$
where 
$$f_t^{a}(x)=\min\left\{ \min\left\{t^{-d/\alpha},(at)^{-d/\beta}\right\},\frac{t}{\abs{x}^{d+\alpha}}+\frac{at}{\abs{x}^{d+\beta}}\right\}. $$
In particular, it follows that there exist a constant $C_{a,d,\alpha,\beta}>0$ such that 
\begin{equation}\label{lower.bound.mixed.density}
p_t^{(a)}(0)\geq C_{a,d,\alpha,\beta}t^{-d/\beta},
\end{equation}
provided  $0<t<\min\set{1,a^{\frac{\alpha}{\beta-\alpha}}}$.

Set
\begin{align*}
F^{(a)}(t\lambda,\xi,\theta)&=e^{-t(1-\lambda_1)\phi_a(|\xi|^{2})-t\mysum{k}{1}{j-1}(\lambda_k - \lambda_{k+1}) \phi_a(|\xi - \mysum{i}{1}{k}\theta_i|^{2})}, \\\\
A_0(t,\lambda)&=t^{\frac{2}{\alpha}}S^*_{1-(\incmt{1}{j}),\frac{\alpha}{2}} + (at)^{\frac{2}{\beta}}S^*_{1-(\incmt{1}{j}),\frac{\beta}{2}},\\\\
A_k(t,\lambda)&=t^{\frac{2}{\alpha}}S^*_{\incmt{k}{k+1},\frac{\alpha}{2}} + (at)^{\frac{2}{\beta}}S^*_{\incmt{k}{k+1},\frac{\beta}{2}},\\\\
L_{j}^{(a)}(t,\lambda,\theta)&=\mysum{k}{1}{j-1}A_k(t,\lambda)\abs{\gamma_k}^2-\frac{\abs{\mysum{k}{1}{j-1}A_k(t,\lambda)\gamma_k}^2}{t^{\frac{2}{\alpha}}S_{1,\frac{\alpha}{2}} + (at)^{\frac{2}{\beta}}S_{1,\frac{\beta}{2}}}.
\end{align*}
Then, we obtain by using the tools of $\S 5$ that,
\begin{equation}
\ioRd F^{(a)}(t\lambda,\xi,\theta)e^{-t\lambda_j\phi(\abs{\xi}^2)}d\xi=\pi^{-d/2}
\texpt{\left(t^{\frac{2}{\alpha}}S_{1,\frac{\alpha}{2}} + (at)^{\frac{2}{\beta}}S_{1,\frac{\beta}{2}}\right)^{-d/2}e^{-L_{j}^{(a)}(t,\lambda,\theta)}}.
\end{equation}
We also observe that 
\begin{equation}
\mysum{k}{0}{j-1}A_k(t,\lambda)= t^{\frac{2}{\alpha}}S_{1,\frac{\alpha}{2}} + (at)^{\frac{2}{\beta}}S_{1,\frac{\beta}{2}}.
\end{equation}
With this equality, we can imitate the proof of Lemma \eqref{lem.fin.rem} along with the Taylor expansion to arrive at 
\begin{equation}
\frac{\texpt{\left(t^{\frac{2}{\alpha}}S_{1,\frac{\alpha}{2}} + (at)^{\frac{2}{\beta}}S_{1,\frac{\beta}{2}}\right)^{-d/2}e^{-L_{j}^{(a)}(t,\lambda,\theta)}}}{p_t^{(a)}(0)}=
(4\pi)^{d/2} - R_{d,1}^{(1)}(t),
\end{equation}
 with
\begin{align}\label{ineq.rem}
\abs{ R_{d,1}^{(1)}(t)}&\leq \frac{\texpt{\left(t^{\frac{2}{\alpha}}S_{1,\frac{\alpha}{2}} + (at)^{\frac{2}{\beta}}S_{1,\frac{\beta}{2}}\right)^{-d/2+1}}}{p_t^{(a)}(0)}\mysum{k}{1}{j-1}\abs{\gamma_k}^2 \nonumber \\
&\leq C_{a,d,\alpha,\beta}\mysum{k}{1}{j-1}\abs{\gamma_k}^2t^{\frac{d}{\beta}}\max\left\{t^{\frac{2}{\alpha}\left(1-\frac{d}{2}\right)} \texpt{S_{1,\frac{\alpha}{2}}^{-d/2+1}},
t^{\frac{2}{\beta}\left(1-\frac{d}{2}\right)}\texpt{S_{1,\frac{\beta}{2}}^{-d/2+1}}\right\},
\end{align}
where we have used \eqref{lower.bound.mixed.density} and \eqref{ineq.exp.mixed.stable}.

Let us set $H_{a}= \phi_{a}(-\Delta)$ and $H_V=H_a+V$.
With this notation and the above bounds we obtain (by mimicking the proof of Theorem \eqref{mainthm}) the following result.

\begin{thm} Assume d=1 and $1<\alpha<\beta<2$. Then,
\begin{align*}
\frac{Tr(e^{-H_Vt}-e^{-tH_a})}{p_t^{(a)}(0)} + t\int_{\R}V(\theta)d\theta  
-\frac{t^2}{2!}\int_{\R}V^2(\theta)d\theta + \frac{t^3}{3!}\int_{\R}V^3(\theta)d\theta=
{\mathcal O}(t^{2+\frac{2}{\beta}}),
\end{align*}
as $\tgo$.
\end{thm}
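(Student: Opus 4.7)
The strategy is to run the Fourier--subordinator machinery of Sections \ref{sec:trace&Fourier}--\ref{sec:coeffic.sect} inside the mixed--stable setting of Section \ref{sec:mixed.proc}, with $J=3$ and $M=1$. First I would iterate the Fourier identity \eqref{import.F.p.equat} three times, with $|\xi|^{\alpha}$ replaced by $\phi_a(|\xi|^2)$, and then evaluate the resulting $\xi$--integral of $F^{(a)}(t\lambda,\xi,\theta)\,e^{-t\lambda_j\phi_a(|\xi|^2)}$ via the subordinator identity derived just above the statement. This yields
\begin{align*}
\frac{\mathrm{Tr}(e^{-tH_V}-e^{-tH_a})}{p_t^{(a)}(0)}
&=-t\int_{\R}V(\theta)\,d\theta
+\sum_{j=2}^{3}\frac{(-t)^j\pi^{1/2}}{(2\pi)^{j}}\int_{I_j}\!\int_{\R^{j-1}}\frac{E_j(t,\lambda,\theta)}{p_t^{(a)}(0)}\,\opV\,d\theta\,d\lambda\\
&\quad+(-t)^4\,\widetilde R_4(t),
\end{align*}
where $E_j(t,\lambda,\theta):=E\bigl[(t^{2/\alpha}S_{1,\frac{\alpha}{2}}+(at)^{2/\beta}S_{1,\frac{\beta}{2}})^{-1/2}e^{-L_j^{(a)}(t,\lambda,\theta)}\bigr]$ and $\widetilde R_4(t):=r_4(t)/p_t^{(a)}(0)$. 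By the same Feynman--Kac and Chapman--Kolmogorov argument as in Proposition \ref{last.remainder} (which requires only that $p_t^{(a)}$ is a Markovian heat kernel and that $p_t^{H_V}\le e^{t\|V\|_\infty}p_t^{(a)}$), $\widetilde R_4(t)$ is bounded on $(0,1)$, so $(-t)^4\widetilde R_4(t)=\mathcal O(t^4)$.

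Next I would apply the first--order Taylor formula \eqref{Tay.exp} to $e^{-L_j^{(a)}}$ inside $E_j$, which writes $E_j(t,\lambda,\theta)=(4\pi)^{1/2}p_t^{(a)}(0)-R^{(1)}_{1,1}(t,\lambda,\theta)$. Substituting the constant term, using $\pi^{1/2}(4\pi)^{1/2}=2\pi$, iterated Plancherel against $\opV$, and $\int_{I_j}d\lambda=1/j!$, produces exactly $\sum_{j=2}^{3}\frac{(-t)^j}{j!}\int_{\R}V^j\,d\theta$, which is the polynomial part of the theorem.

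The key step, and the one that pins down the hypothesis $1<\alpha<\beta<2$, is the bound on the Taylor remainder. Specialised to $d=1$, the estimate \eqref{ineq.rem} becomes
$$|R^{(1)}_{1,1}(t,\lambda,\theta)|\le C_{a,\alpha,\beta}\Bigl(\sum_{k=1}^{j-1}|\gamma_k|^2\Bigr)t^{1/\beta}\max\bigl\{t^{1/\alpha}E[S_{1,\frac{\alpha}{2}}^{1/2}],\,t^{1/\beta}E[S_{1,\frac{\beta}{2}}^{1/2}]\bigr\},$$
and both moments are finite precisely because \eqref{gen.expt.S1} applied with $\eta=1/2$ forces $\alpha>1$ and $\beta>1$. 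Since $1/\alpha>1/\beta$ and $t\in(0,1)$, the max equals $t^{1/\beta}$, so $|R^{(1)}_{1,1}(t,\lambda,\theta)|\le C(\sum_k|\gamma_k|^2)\,t^{2/\beta}$. Because $V\in\calS(\R)$ the resulting $\theta$--integrals converge, and the $j$--th Taylor--remainder contribution is $\mathcal O(t^{j+2/\beta})$; the dominant one ($j=2$) is $\mathcal O(t^{2+2/\beta})$. Since $1<\beta<2$ forces $3<2+2/\beta<4$, this error absorbs the $\mathcal O(t^4)$ coming from $\widetilde R_4$, yielding the claim.

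The principal obstacle, and the reason Section \ref{sec:mixed.proc} must depart from the approach of Theorem \ref{mainthm}, is that $p_t^{(a)}$ lacks the self--similar scaling of $\palp$, so coefficients $C_{n,j}^{(a)}$ in the sense of the pure stable case are not available; one must instead lean on the Chen--Kim--Song lower bound $p_t^{(a)}(0)\ge C_{a,1,\alpha,\beta}\,t^{-1/\beta}$ of \eqref{lower.bound.mixed.density} on a short time interval, which is exactly what converts the moment estimates \eqref{ineq.exp.mixed.stable} for the mixed subordinator into the sharp exponent $2/\beta$ appearing in the theorem.
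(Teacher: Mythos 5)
Your proposal is correct and takes essentially the same route the paper intends: Section 11 of the paper sets up precisely the ingredients you use — the iterated Fourier identity with $\phi_a$ in place of $|\xi|^\alpha$, the $\xi$-integral evaluated via the mixed subordinator $S_{t,a}=S_{t,\alpha/2}+a^{2/\beta}S_{t,\beta/2}$, the first-order Taylor expansion with remainder bound \eqref{ineq.rem}, and the Chen--Kim--Song lower bound \eqref{lower.bound.mixed.density} on $p_t^{(a)}(0)$ — and then records the theorem as a direct consequence without spelling out the steps. Your argument is exactly the filling-in of those steps with $J=3$, $M=1$, $d=1$: the constant Taylor term reassembles the $\sum_{j\le 3}\frac{(-t)^j}{j!}\int V^j$ polynomial, the $(J+1)$-st remainder gives $\mathcal O(t^4)$ by the same Feynman--Kac/Chapman--Kolmogorov bound as in Proposition \ref{last.remainder}, and the Taylor remainder gives $\mathcal O(t^{j+2/\beta})$ provided $\alpha,\beta>1$ so that $E[S_{1,\alpha/2}^{1/2}],E[S_{1,\beta/2}^{1/2}]<\infty$; the dominant $j=2$ contribution yields the stated exponent $2+2/\beta$, which absorbs the $\mathcal O(t^4)$ term since $\beta<2$. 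You also correctly identify that the $t^{1/\beta}$ factor coming from the \emph{lower} bound on $p_t^{(a)}(0)$ (rather than a scaling identity, which is unavailable here) is what fixes the exponent.
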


\begin{thm} Assume $d\geq2$ , $0<\alpha<\beta<2$ and $2+\frac{2}{\alpha} -d\left(\frac{1}{\alpha}-\frac{1}{\beta}\right)\geq 0$.  Then,
\begin{align*}
\frac{Tr(e^{-H_Vt}-e^{-tH_a})}{p_t^{(a)}(0)} + t\int_{\R^d}V(\theta)d\theta  
-\frac{t^2}{2!}\int_{\R^d}V^2(\theta)d\theta + \frac{t^3}{3!}\int_{\R^d}V^3(\theta)d\theta=
t^{\Phi_{d}(\alpha,\beta)}R(t),
\end{align*}
for some bounded function $R(t)$, $t\in \left(0,\min\left\{a^{\frac{\alpha}{\beta-\alpha}},1\right\}\right)$.   Here, 
\[ \Phi_{d}(\alpha,\beta) 
= \min \left\{ 4,2+\frac{2}{\alpha}- d \left( \frac{1}{\alpha}-
\frac{1}{\beta} \right)\right\}.\]
\end{thm}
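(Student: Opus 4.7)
My approach parallels \S\ref{sec:proof.theorem}, adapted using the Bernstein-function machinery from \S\ref{sec:rel.proc}. First I would transfer the Fourier-trace identity \eqref{trace.3} to the mixed-stable setting by replacing $\abs{\xi}^\alpha$ with $\phi_a(\abs{\xi}^2)$ and $\palp$ with $p_t^{(a)}$; the arguments of \S\ref{sec:trace&Fourier} and \S\ref{sec:boundedcoeff.rem} go through verbatim since they use only the semigroup property and Fourier inversion. Specializing to $J=3$ gives
\begin{align*}
\frac{Tr(e^{-tH_V}-e^{-tH_a})}{p_t^{(a)}(0)} = -t\int_{\R^d} V(\theta)\,d\theta + \mysum{j}{2}{3}\frac{(-t)^j\, T^{(a)}(j,t)}{(2\pi)^{jd}\,p_t^{(a)}(0)} + t^4 \widetilde R_4(t),
\end{align*}
with $\widetilde R_4(t)$ bounded on $(0,1)$ by the analog of Proposition \ref{last.remainder}, and
\begin{align*}
T^{(a)}(j,t) = \pi^{d/2}\int_{I_j}\int_{\R^{(j-1)d}}\texpt{S_{t,a}^{-d/2}\, e^{-L_j^{(a)}(t,\lambda,\theta)}}\,\opV\, d\theta_i\, d\lambda_i\,d\lambda_j.
\end{align*}

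Next I would apply the order-$1$ Taylor expansion \eqref{Tay.exp} to write $e^{-L_j^{(a)}} = 1 - L_j^{(a)}\,e^{-\beta_1 L_j^{(a)}}$ with some random $\beta_1\in(0,1)$. The leading term $\texpt{S_{t,a}^{-d/2}} = (4\pi)^{d/2} p_t^{(a)}(0)$ is $\theta$- and $\lambda$-independent, so exactly the $n=0$ computation of \S\ref{sec:explicitcoeff} produces $\frac{(2\pi)^{jd}p_t^{(a)}(0)}{j!}\int_{\R^d} V^j\,d\theta$ after integration. Division by $(2\pi)^{jd}p_t^{(a)}(0)$ and summation over $j=2,3$ then yield the polynomial terms $\frac{t^2}{2!}\int V^2 - \frac{t^3}{3!}\int V^3$, so that the quantity to control becomes
\begin{align*}
-t^2 R^{(1)}_{d,2}(t) + t^3 R^{(1)}_{d,3}(t) - t^4 \widetilde R_4(t),\qquad R^{(1)}_{d,j}(t) = \frac{\pi^{d/2}\,E[S_{t,a}^{-d/2}\,L_j^{(a)}\,e^{-\beta_1 L_j^{(a)}}]\text{-integral}}{(2\pi)^{jd}p_t^{(a)}(0)}.
\end{align*}

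The crux is the uniform bound on $R^{(1)}_{d,j}(t)$. Using $L_j^{(a)}\leq \left(\mysum{k}{1}{j-1} A_k(t,\lambda)\right)\mysum{k}{1}{j-1}\abs{\gamma_k}^2 \leq S_{t,a}\mysum{k}{1}{j-1}\abs{\gamma_k}^2$ (the mixed analog of the bound in Lemma \ref{lem.fin.rem}), the lower bound \eqref{lower.bound.mixed.density} $p_t^{(a)}(0)\geq C\, t^{-d/\beta}$ valid on the interval $(0,\min\{1,a^{\alpha/(\beta-\alpha)}\})$, and the moment inequality \eqref{ineq.exp.mixed.stable} applied with $\eta=1-\tfrac{d}{2}\leq 0$, I obtain
\begin{align*}
\abs{R^{(1)}_{d,j}(t)}\leq C_{a,d,\alpha,\beta}\, t^{\frac{2}{\alpha}-d\left(\frac{1}{\alpha}-\frac{1}{\beta}\right)}\int_{\R^{(j-1)d}}\mysum{k}{1}{j-1}\abs{\gamma_k}^2\,\abs{\opV}\,d\theta_i,
\end{align*}
with the $\theta$-integral finite since $V\in\calS(\R^d)$. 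The two hypotheses $d\geq 2$ and $\alpha<\beta$ enter here precisely to guarantee $1-\tfrac{d}{2}\leq 0$ and $\tfrac{2}{\alpha}>\tfrac{2}{\beta}$, so that for $t<1$ the maximum in \eqref{ineq.exp.mixed.stable} is attained at the $\alpha$-branch $t^{(2/\alpha)(1-d/2)}$; this is exactly what produces the exponent $\tfrac{2}{\alpha}-d(\tfrac{1}{\alpha}-\tfrac{1}{\beta})$.

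Combining the three error exponents $2+\tfrac{2}{\alpha}-d(\tfrac{1}{\alpha}-\tfrac{1}{\beta})$, $3+\tfrac{2}{\alpha}-d(\tfrac{1}{\alpha}-\tfrac{1}{\beta})$, and $4$, the smallest is $\Phi_d(\alpha,\beta)$, and the hypothesis $2+\tfrac{2}{\alpha}-d(\tfrac{1}{\alpha}-\tfrac{1}{\beta})\geq 0$ ensures each of them is nonnegative, so dividing by $t^{\Phi_d(\alpha,\beta)}$ produces a bounded $R(t)$ on the stated interval. The main technical obstacle is less algebraic than bookkeeping: one must correctly identify the dominant branch in the maximum of \eqref{ineq.exp.mixed.stable} and verify that restricting $t<a^{\alpha/(\beta-\alpha)}$ is precisely what keeps the $\beta$-branch of \eqref{lower.bound.mixed.density} active; the remainder of the argument is a direct transcription of the scheme used in \S\ref{sec:proof.theorem} and \S\ref{sec:rel.proc}.
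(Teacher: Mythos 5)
Your proposal follows the paper's proof essentially step by step: the Fourier--trace identity with $\phi_a(|\xi|^2)$ in place of $|\xi|^\alpha$, the reduction to $J=3$, the order-one Taylor expansion of $e^{-L_j^{(a)}}$, and the remainder estimate obtained by combining the moment bound \eqref{ineq.exp.mixed.stable} with $\eta=1-\tfrac d2$, the density lower bound \eqref{lower.bound.mixed.density}, and the observation that for $d\geq 2$, $\alpha<\beta$, and $t<1$ the dominant branch of the max is $t^{(2/\alpha)(1-d/2)}$, which yields exactly $t^{\frac{2}{\alpha}-d(\frac1\alpha-\frac1\beta)}$. You also correctly identify where each hypothesis enters ($d\geq2$ and $\alpha<\beta$ for the branch selection, $t<a^{\alpha/(\beta-\alpha)}$ for the $\beta$-branch of the density bound, and the sign condition for the boundedness of $R$), so this is the same argument as the paper's.
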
 

As an example, observe that 
\[\Phi_2(\alpha,\beta)=\min\set{4, 2+\frac{2}{\beta}}.\]  This gives 

\begin{cor} For d=2, we have
\begin{align*}
\frac{Tr(e^{-H_Vt}-e^{-tH_a})}{p_t^{(a)}(0)} + t\int_{\R^2}V(\theta)d\theta  
-\frac{t^2}{2!}\int_{\R^2}V^2(\theta)d\theta + \frac{t^2}{3!}\int_{\R^2}V^3(\theta)d\theta = 
\nonumber \\
\left\{\begin{array}{cc}
       {\mathcal O}( t^{2 + \frac{2}{\beta}}), &  \mbox{if  $1\leq\beta$ and $\alpha<\beta$}, \nonumber\\
       {\mathcal O}(t^4), & \mbox{if  $0<\alpha<\beta<1$ }. \nonumber \\
       \end{array}   
\right. 
\end{align*}
\end{cor}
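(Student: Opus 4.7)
The plan is simply to invoke the previous theorem with $d=2$ and evaluate the exponent $\Phi_{d}(\alpha,\beta)$ explicitly. First I would verify the hypothesis. With $d=2$, the quantity
\[
2+\frac{2}{\alpha}-d\Bigl(\frac{1}{\alpha}-\frac{1}{\beta}\Bigr)
=2+\frac{2}{\alpha}-\frac{2}{\alpha}+\frac{2}{\beta}=2+\frac{2}{\beta}
\]
is always strictly positive for $0<\alpha<\beta<2$, so the condition $2+\frac{2}{\alpha}-d(\frac{1}{\alpha}-\frac{1}{\beta})\geq 0$ required by the preceding theorem is automatic. Consequently,
\[
\Phi_{2}(\alpha,\beta)=\min\Bigl\{4,\,2+\tfrac{2}{\beta}\Bigr\}.
\]

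Next I would split into the two cases depending on which term in the minimum wins. The inequality $2+\frac{2}{\beta}\leq 4$ is equivalent to $\beta\geq 1$, while $2+\frac{2}{\beta}>4$ is equivalent to $\beta<1$. Hence
\[
\Phi_{2}(\alpha,\beta)=
\begin{cases}
2+\tfrac{2}{\beta}, & 1\leq\beta<2,\\
4, & 0<\beta<1.
\end{cases}
\]
Since $R(t)$ from the preceding theorem is bounded on $(0,\min\{a^{\alpha/(\beta-\alpha)},1\})$, the term $t^{\Phi_{2}(\alpha,\beta)}R(t)$ is $\mathcal{O}(t^{2+2/\beta})$ in the first case and $\mathcal{O}(t^{4})$ in the second case, as $\tgo$. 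Substituting this into the conclusion of the preceding theorem yields exactly the stated dichotomy.

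There is essentially no obstacle here; the work has all been done in the proof of the general-$d$ theorem (bounding the remainder $R_{d,1}^{(1)}(t)$ via \eqref{ineq.rem} using the lower bound \eqref{lower.bound.mixed.density} for $p_{t}^{(a)}(0)$ together with \eqref{ineq.exp.mixed.stable}). The only subtlety worth highlighting is that the restriction on $t$ to the interval $(0,\min\{a^{\alpha/(\beta-\alpha)},1\})$ is needed to apply \eqref{lower.bound.mixed.density}, and this restriction is harmless for the $\mathcal{O}(\cdot)$ statement since we may absorb the behaviour away from $0$ into the constant.
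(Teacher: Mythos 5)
Your proposal is correct and takes exactly the same route as the paper: the paper introduces this corollary with the one-line observation that $\Phi_2(\alpha,\beta)=\min\{4,\,2+\tfrac{2}{\beta}\}$ and lets the reader resolve the minimum, which is precisely the case analysis (threshold at $\beta=1$) that you carry out. Your additional remark that the hypothesis $2+\tfrac{2}{\alpha}-d(\tfrac{1}{\alpha}-\tfrac{1}{\beta})\geq 0$ is automatic when $d=2$ is a useful explicit check that the paper leaves implicit.
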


Finally, we remark  that the condition  $2+\frac{2}{\alpha}>d\left(\frac{1}{\alpha}-\frac{1}{\beta}\right)$ comes from \eqref{ineq.rem} in order to obtain the boundedness of $R(t)$
over the interval stated above. 
\bigskip

{\bf Acknowledgements}: I am grateful  to my supervisor,  Professor Rodrigo Ba\~nuelos, for his valuable guidance, advice and time while preparing this paper.

\end{document}